\documentclass[10pt]{amsart}
\usepackage{times,amsmath,amsbsy,amssymb,amscd,mathrsfs}
\usepackage{graphicx,subfigure,epstopdf,wrapfig,chemarrow}

\usepackage{algorithm2e} 
\usepackage{multicol,multirow}
\usepackage{mathtools}
\usepackage[usenames,dvipsnames,svgnames,table]{xcolor}
\usepackage[all]{xy}
\usepackage{wrapfig}
\usepackage{tcolorbox}

\usepackage{tikz,tikz-cd}
\usepackage[utf8]{inputenc}
\usepackage{pgfplots} 
\usepackage{pgfgantt}
\usepackage{pdflscape}
\pgfplotsset{compat=newest} 
\pgfplotsset{plot coordinates/math parser=false}
\newlength\fwidth

\definecolor{myBlue}{rgb}{0.0,0.0,0.55}
\usepackage[colorlinks=true,citecolor=myBlue,linkcolor=myBlue]{hyperref}

\usepackage[hyperpageref]{backref}

\usepackage{comment,enumerate,multicol,xspace}

  \newcounter{mnote}
  \let\oldmarginpar\marginpar
    \renewcommand\marginpar[1]{\-\oldmarginpar[\raggedleft\footnotesize #1]%
    {\raggedright\footnotesize #1}}

\newtheorem{theorem}{Theorem}[section]
\newtheorem{lemma}[theorem]{Lemma}

\newtheorem{remark}[theorem]{Remark}

\newcommand{\dx}{\,{\rm d}x}

\newcommand{\bs}{\boldsymbol}

\renewcommand{\div}{\operatorname{div}}
\newcommand{\tr}{\operatorname{tr}}

\newcommand{\vertiii}[1]{{\left\vert\kern-0.25ex\left\vert\kern-0.25ex\left\vert #1 
    \right\vert\kern-0.25ex\right\vert\kern-0.25ex\right\vert}}

\usepackage{booktabs}
\usepackage{stmaryrd}
\usepackage{scalefnt}
\usepackage{graphicx}
\usepackage{caption}
\usepackage{float}

\usepackage{booktabs}
\usepackage{multirow}
\usepackage{siunitx}
\usepackage{tikz}
\usetikzlibrary{arrows.meta,positioning}

\allowdisplaybreaks[1]

\begin{document}
 \title[Symmetric-Tensor Distributional Mixed Method]{Symmetric-Tensor Distributional Mixed Method for Fourth-Order Elliptic Singular Perturbation Problem}
 \author{Xuehai Huang}%
 \address{School of Mathematics, Shanghai University of Finance and Economics, Shanghai 200433, China}%
 \email{huang.xuehai@sufe.edu.cn}%
 \author{Xinyue Zhao}%
 \address{School of Mathematics, Shanghai University of Finance and Economics, Shanghai 200433, China}%
 \email{zhaoxinyue20210921@163.com}%

 \makeatletter
 \@namedef{subjclassname@2020}{\textup{2020} Mathematics Subject Classification}
 \makeatother
 \subjclass[2020]{
 65N12;   
 65N22;   
 65N30;   
 15A69;   
 }

\begin{abstract}
A symmetric-tensor distributional mixed method for a fourth-order elliptic singular perturbation problem is developed in this paper.
The moment variable is approximated by normal--normal continuous symmetric tensor elements, while the scalar variable is represented by an \(H^1\)-nonconforming virtual element space coupled with a polynomial multiplier on interior codimension-two subsimplices.
Optimal parameter-uniform error estimates are derived, independent of the presence of boundary layers.
A hybridized form is further shown to be equivalent to stabilization-free weak Galerkin and \(H^2\)-nonconforming virtual element formulations.
In two dimensions, we establish a close connection between the distributional mixed method and the classical
Hellan--Herrmann--Johnson (HHJ) method by identifying the scalar virtual element--multiplier pair with the Lagrange finite element
space.
Consequently, the proposed method extends the two-dimensional HHJ framework to any spatial dimension \(d\ge2\). Three-dimensional numerical experiments support the theoretical convergence and robustness estimates.
A two-dimensional adaptive constant-load benchmark on an L-shaped polygonal
domain tests the method on a non-manufactured nonsmooth problem and shows mesh
concentration near the reentrant corner and, for small \(\varepsilon\), boundary
refinement at the expected \(O(\varepsilon)\) scale.
\end{abstract}
\keywords{Fourth-order elliptic singular perturbation problem, distributional mixed method, symmetric tensor finite element, Hellan--Herrmann--Johnson method, optimal parameter-uniform error estimates}

\maketitle

\section{Introduction}

We develop a symmetric-tensor distributional mixed method for the fourth-order elliptic singular perturbation problem:
\begin{equation}\label{eq:fourthorderequation}
	\begin{cases}
		\varepsilon^2\Delta^2 u-\Delta u=f & \text{in }\Omega,\\
		u=\partial_n u=0 & \text{on }\partial\Omega,
	\end{cases}
\end{equation}
where \(\Omega\subset\mathbb R^d\) (\(d\ge2\)) is a bounded polytope and \(\varepsilon>0\) is the perturbation parameter. As \(\varepsilon\to0\), the operator formally
degenerates to the Poisson operator, whereas the normal-derivative condition
\(\partial_n u=0\) has no counterpart in the reduced problem. This
incompatibility may generate boundary layers near \(\partial\Omega\) and is the
main difficulty in deriving optimal error estimates uniform in \(\varepsilon\).

Several scalar primal discretizations have been proposed for
\eqref{eq:fourthorderequation}. Conforming \(H^2\) methods discretize the fourth- and second-order terms directly, but require \(C^1\) finite
elements and are difficult to implement in general \cite{Semper1992}.
Many \(C^1\)-free primal methods are robust in suitable norms, including nonconforming \(H^2\) methods \cite{NilssenTaiWinther2001}, \(C^0\) interior-penalty methods \cite{BrennerNeilan2011}, Morley--Wang--Xu type methods \cite{WangXuHu2006,WangMeng2007,WangHuangTangZhou2018,HuangShiWang2021}, and virtual element methods \cite{ZhangZhaoChen2020,FengYu2024,ZhangZhao2024}. However, for many such methods, the available parameter-uniform estimates are suboptimal in the boundary-layer regime.

Optimal or high-order
parameter-uniform results within scalar primal formulations have been obtained
by restoring compatibility with the reduced Poisson problem or by using
layer-adapted meshes, including Nitsche-type methods
\cite{Nitsche1971,GuzmanLeykekhmanNeilan2012}, hybrid high-order methods
\cite{DongErn2021}, Nitsche-modified Morley--Wang--Xu methods
\cite{HuangShiWang2021}, and layer-adapted \(C^0\) interior-penalty methods
\cite{FranzRoosWachtel2014}.
These approaches typically involve penalty or stabilization terms. The interpolation-based method of \cite{CuiHuang2025} also gives optimal parameter-uniform estimates by introducing an interpolation from an \(H^2\) finite element space to an \(H^1\) finite element space in both the load term and the Laplacian bilinear form.

Mixed formulations provide another approach by exposing the fourth-order structure through a moment tensor. For \eqref{eq:fourthorderequation}, the Hellan--Herrmann--Johnson (HHJ)
method \cite{Hellan1967,Herrmann1967,Johnson1973} was used in two dimensions in \cite{LiuHuangWang2020}, but the parameter-robust estimates proved below were not established there.
Robust and optimal mixed methods based on row-wise \(H(\operatorname{div})\)-conforming tensor discretizations were developed in \cite{HuangTang2025}; in that setting the tensor variable is not symmetric.
In the present problem, however, the moment variable is naturally symmetric because it is proportional to the Hessian. In Kirchhoff--Love plate models \cite{Ciarlet1997Plates}, this variable may be interpreted as a scaled bending-moment tensor. The symmetry of such stress or moment tensors is consistent with the symmetry of the Cauchy stress tensor in continuum mechanics, which follows from angular-momentum balance; see, e.g., \cite{GurtinFriedAnand2010}.

The central idea of this paper is to introduce
\(\boldsymbol{\sigma}=\varepsilon^2\nabla^2u\) as a symmetric tensor field. Then \eqref{eq:fourthorderequation} can be written as
\begin{equation}\label{eq:intro-mixed-system}
	\begin{cases}
		\varepsilon^{-2}\boldsymbol{\sigma}=\nabla^2u & \text{in }\Omega,\\
		\operatorname{div}\operatorname{div}\boldsymbol{\sigma}-\Delta u=f
		& \text{in }\Omega,\\
		u=\partial_nu=0 & \text{on }\partial\Omega.
	\end{cases}
\end{equation}
The resulting distributional mixed formulation \eqref{eq:mixform} seeks
\(\boldsymbol{\sigma}\in H^{-1}(\operatorname{div}\operatorname{div},\Omega;\mathbb S)\) and \(u\in H_0^1(\Omega)\), where
\begin{equation*}
 H^{-1}(\div\div,\Omega;\mathbb S)
 :=\{\boldsymbol\tau\in L^2(\Omega;\mathbb S):
 \div\div\boldsymbol\tau\in H^{-1}(\Omega)\},
\end{equation*}
and \(\mathbb S\) denotes the space of symmetric tensors. The well-posedness of this formulation is tied to the exactness of the terminal part of the distributional $\div\div$ complex
\begin{equation}\label{intro:shortdistritdivdiv}
	H^{-1}(\div\div,\Omega;\mathbb S)\xrightarrow{\div\div}H^{-1}(\Omega)\to0.
\end{equation}

At the discrete level, we use the distributional finite element
\(\operatorname{div}\operatorname{div}\) complexes in
\cite[Theorem~5.11]{ChenHuang2025}. The moment variable is approximated by
normal--normal continuous symmetric tensor elements
\cite{ChenHuang2025,HuLin2025,BerchenkoKoganGawlik2025}. The scalar
approximation uses an \(H^1\)-nonconforming virtual element space and a
polynomial multiplier on interior codimension-two subsimplices. The resulting
distributional mixed method \eqref{eq:fourth-order-discrete-mixed} possesses
optimal parameter-uniform error estimates
\eqref{eq:robusterror1}--\eqref{eq:robusterror2}, independent of the presence
of boundary layers.

We also provide an alternative construction of the normal--normal continuous
symmetric tensor element based on a tangential--normal decomposition. In this
construction, all boundary degrees of freedom are located on codimension-one
faces. Such normal--normal continuous symmetric tensor elements also arise in
the tangential-displacement
and normal--normal-stress (TDNNS) method for elasticity
\cite{PechsteinSchoeberl2011} and Reissner--Mindlin plates
\cite{PechsteinSchoeberl2017}, as well as low-order mixed methods for
elasticity \cite{CarstensenHeuer2025,CarstensenHeuer2026}.

In two dimensions, we establish a precise connection between the distributional mixed method \eqref{eq:fourth-order-discrete-mixed} and the classical HHJ method. Specifically, the \(H^1\)-nonconforming virtual element space together with the codimension-two multiplier can be identified with a Lagrange finite element space; see Fig.~\ref{fig:hessian-coupling}. Under this identification, the discrete distributional bilinear form of the biharmonic operator agrees with the HHJ bilinear form. The proposed method can therefore be viewed as an extension of the two-dimensional HHJ method to any spatial dimension \(d\ge2\). The standard HHJ coupling with a Lagrange scalar space is essentially two-dimensional and does not directly yield a discretization in higher dimensions. We overcome this obstruction by coupling the moment space with the \(H^1\)-nonconforming virtual element space and the polynomial multiplier.

To complement the manufactured convergence tests, we include a non-manufactured
constant-load clamped-plate benchmark on an L-shaped polygonal domain. The test
compares adaptive meshes for \(\varepsilon=1\) and \(\varepsilon=10^{-3}\) and
visualizes the recovered curvature field. The comparison shows mesh
concentration near the reentrant corner and, for \(\varepsilon=10^{-3}\),
boundary refinement at the expected \(O(\varepsilon)\) scale.

\begin{figure}[t]
	\centering
	\includegraphics[width=0.82\textwidth]{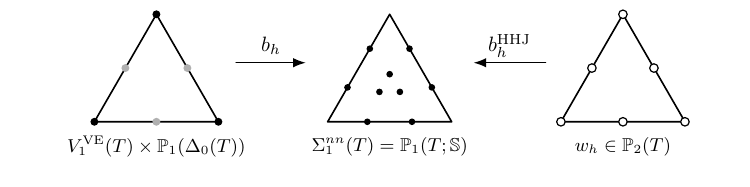}
	\caption{Lowest-order two-dimensional relation with the classical HHJ method.
		Under the local isomorphism, the pair
		\(V_1^{\mathrm{VE}}(T)\times\mathbb P_1(\Delta_0(T))\) corresponds to the
		local quadratic Lagrange finite element space, and the local bilinear form
		\(b_h\) coincides with the classical HHJ bilinear form
		\(b_h^{\mathrm{HHJ}}\).}
	\label{fig:hessian-coupling}
\end{figure}

As a further consequence, relaxing the normal--normal continuity gives a
hybridized formulation. After local elimination of the tensor variable, the
hybridized method is equivalent to stabilization-free weak Galerkin and
\(H^2\)-nonconforming virtual element methods.

The rest of the paper is organized as follows. Section~\ref{sec:distribmixedform} presents the continuous
distributional mixed formulation. Section~\ref{sec:discrete-spaces} constructs the discrete spaces and establishes the
norm equivalence. Section~\ref{sec:distribmfem} introduces the mixed method, establishes its
two-dimensional HHJ connection, and proves the robust error estimates. Section~\ref{sec:hybridization} derives the hybridized formulation and its equivalent
weak Galerkin and \(H^2\)-nonconforming virtual element formulations.
Section~\ref{sec:numerresults} reports numerical experiments.
 
  \section{Symmetric-Tensor Distributional Mixed Formulation}\label{sec:distribmixedform}
In this section, we present a symmetric-tensor distributional mixed formulation of
 \eqref{eq:fourthorderequation}, establish its well-posedness and equivalence with
 the primal weak formulation, and state the regularity assumptions used
 in the error analysis.
 
 \subsection{Notation}
 
 Throughout the paper, \(\Omega\subset\mathbb R^d\) (\(d\ge2\)) denotes a bounded
 polytope. For a bounded domain \(D\subset\mathbb R^d\) and \(m\ge0\), we denote
 by \(H^m(D)\) the standard Sobolev space on \(D\) with norm
 \(\|\cdot\|_{m,D}\) and seminorm \(|\cdot|_{m,D}\). We write \(H_0^1(D)\) and
 \(H_0^2(D)\) for the closures of \(C_0^\infty(D)\) in \(H^1(D)\) and
 \(H^2(D)\), respectively, and use \((\cdot,\cdot)_D\) to denote the
 \(L^2(D)\) inner product. When \(D=\Omega\), the subscript is omitted. The
 duality pairing between a Banach space \(V\) and its dual \(V'\) is denoted by
 \(\langle\cdot,\cdot\rangle_{V'\times V}\), or simply by
 \(\langle\cdot,\cdot\rangle\) if no confusion arises.
 
For an integer \(k\ge 0\), let \(\mathbb P_k(D)\) denote the space of
polynomials on \(D\) of total degree at most \(k\). If \(D\) is a vertex,
we identify \(\mathbb P_k(D)\) with \(\mathbb R\). We adopt the convention
\(\mathbb P_k(D)=\{0\}\) for \(k<0\). Let
\[
\mathbb S:=\{\boldsymbol\tau\in\mathbb R^{d\times d}:
\boldsymbol\tau^{\intercal}=\boldsymbol\tau\}
\]
be the space of symmetric tensors. For
\(\boldsymbol\tau\in\mathbb R^{d\times d}\), set
\[
\operatorname{sym}\boldsymbol\tau
:=\frac12(\boldsymbol\tau+\boldsymbol\tau^{\intercal}).
\]
For a finite-dimensional space \(\mathbb X\), we use
\(L^2(D;\mathbb X)\) and \(H^m(D;\mathbb X)\) for the corresponding
\(\mathbb X\)-valued Lebesgue and Sobolev spaces, and set
\(\mathbb P_k(D;\mathbb X):=\mathbb P_k(D)\otimes\mathbb X\).
We denote by \(Q_{k,D}\) the \(L^2(D;\mathbb X)\)-orthogonal projection
onto \(\mathbb P_k(D;\mathbb X)\), with the convention \(Q_{-1,D}=0\).

 For a scalar function \(v\), we use \(\nabla v\) and \(\nabla^2 v\) for its
 gradient and Hessian, respectively. For a vector-valued function
 \(\boldsymbol v\), we write \(\nabla\boldsymbol v:=\nabla\otimes\boldsymbol v\).
 For a tensor-valued function \(\boldsymbol\tau=(\tau_{ij})_{i,j=1}^d\), the
 divergence is taken row-wise, namely,
 \((\div\boldsymbol\tau)_i:=\sum_{j=1}^d\partial_j\tau_{ij}\) for \(i=1,\dots,d\),
 and \(\div\div\boldsymbol\tau\) is understood in the distributional sense
 whenever needed.
 
 Let \(\mathcal T_h\) be a conforming shape-regular simplicial triangulation of
 \(\Omega\). For each \(T\in\mathcal T_h\), we write
 \(h_T:=\operatorname{diam}(T)\) and set
 \(h:=\max_{T\in\mathcal T_h}h_T\). We denote by \(\Delta_j(T)\) the set of all
 \(j\)-dimensional subsimplices of \(T\). The sets of all faces and all interior
 faces are denoted by \(\mathcal F_h\) and \(\mathring{\mathcal F}_h\),
 respectively, while \(\mathcal E_h\) and \(\mathring{\mathcal E}_h\) denote the
 sets of all \((d-2)\)-dimensional subsimplices and the interior ones. 
For \(e\in\mathcal E_h\), we define the patch of elements around \(e\) by
 \(\omega_e:=\bigcup\{T\in\mathcal T_h:\ e\subset\overline T\}\). 
 
 For each simplex \(T\in\mathcal T_h\) with vertices \(\{\texttt{v}_i\}_{i=0}^d\), we
 denote by \(F_i\) the face opposite to \(\texttt{v}_i\), by
 \(\{\lambda_i\}_{i=0}^d\) the corresponding barycentric coordinates, and by
 \(\boldsymbol n_{\partial T}\) the unit outward normal on \(\partial T\). On
 each local face \(F_i\subset\partial T\), we write
 \(\boldsymbol n_i:=\boldsymbol n_{\partial T}|_{F_i}\). For a global
 face \(F\in\mathcal F_h\), \(\boldsymbol n_F\) denotes a fixed unit normal on
 \(F\), chosen as the outward unit normal to \(\partial\Omega\) when
 \(F\subset\partial\Omega\). If \(F\in\mathring{\mathcal F}_h\), we denote by
 \(T^+\) and \(T^-\) the two elements sharing \(F\), with \(\boldsymbol n_F\)
 taken outward on \(T^+\). 
For $e\subset \partial F$, let $\boldsymbol n_{F,e}$ denote the outward unit normal to $e$ within the hyperplane containing $F$.
We also set the edge vector \(\boldsymbol t_{i,j}:=\texttt{v}_j-\texttt{v}_i\) for \(i,j=0,1,\dots,d\).
 For a face \(F\in\mathcal F_h\), \(\div_F\) denotes the surface divergence on
 \(F\). For \(\boldsymbol\tau\in\mathbb S\), its normal--normal component on a
 local face \(F\subset\partial T\) is defined by
 \(\boldsymbol\tau_{nn}:=\boldsymbol n_{\partial T}^{\intercal}
 \boldsymbol\tau\boldsymbol n_{\partial T}\).
 
For
$\mathcal G_h\in\{\mathcal T_h,\mathcal F_h,\mathring{\mathcal F}_h,
  \mathcal E_h,\mathring{\mathcal E}_h\}$,
we define the broken Sobolev space and the piecewise polynomial space by
\[
H^s(\mathcal G_h):=\prod_{G\in\mathcal G_h} H^s(G),
\qquad
\mathbb P_k(\mathcal G_h):=\prod_{G\in\mathcal G_h}\mathbb P_k(G).
\]
For a finite-dimensional space \(\mathbb X\), we set
\[
H^s(\mathcal G_h;\mathbb X):=H^s(\mathcal G_h)\otimes \mathbb X,
\qquad
\mathbb P_k(\mathcal G_h;\mathbb X):=
\mathbb P_k(\mathcal G_h)\otimes \mathbb X .
\]
We denote by \(Q_{k,\mathcal G_h}\) the elementwise
\(L^2\)-orthogonal projection onto
\(\mathbb P_k(\mathcal G_h;\mathbb X)\), and set
\(Q_{k,h}:=Q_{k,\mathcal T_h}\). Here and throughout, functions defined
on interior faces or interior \((d-2)\)-subsimplices are understood to be
extended by zero to the corresponding boundary entities whenever needed.

 For piecewise smooth scalar-, vector-, or tensor-valued functions, we use
 \(\nabla_h\), \(\nabla_h^2\), \(\div_h\), and \(\div\div_h\) to denote the
 elementwise gradient, Hessian, divergence, and double divergence, respectively.
 We also use the broken norm and seminorm
 \[
 \|v\|_{m,h}^2:=\sum_{T\in\mathcal T_h}\|v\|_{m,T}^2,
 \qquad
 |v|_{m,h}^2:=\sum_{T\in\mathcal T_h}|v|_{m,T}^2,
 \]
 with the same notation understood componentwise for vector- and tensor-valued
 functions.
 
 For a face \(F\in\mathcal F_h\) and a piecewise smooth scalar-, vector-, or
 tensor-valued function \(\phi\), we define the jump by
 \[
 [\![\phi]\!]|_F :=
 \begin{cases}
 \phi^+\boldsymbol{n}_F\cdot\boldsymbol{n}_{\partial T^+} + \phi^-\boldsymbol{n}_F\cdot\boldsymbol{n}_{\partial T^-},
 & \text{if } F=\partial T^+\cap\partial T^-\in\mathring{\mathcal F}_h,\\
 \phi|_F,
 & \text{if } F\subset\partial\Omega,
 \end{cases}
 \]
 where \(\phi^\pm\) denote the traces from \(T^\pm\).
 
Throughout this paper, \(C\) denotes a generic positive constant independent of the mesh size
 \(h\) and the perturbation parameter \(\varepsilon\). We write \(a\lesssim b\) if
 \(a\le Cb\), and \(a\eqsim b\) if \(a\lesssim b\) and \(b\lesssim a\).

 We shall use the following Green identity for the \(\operatorname{div}\operatorname{div}\) operator; see \cite[Lemma~5.2]{ChenHuang2022} and \cite[Lemma~4.1]{ChenHuang2022a}.
 \begin{lemma}
For any $\boldsymbol \sigma\in \mathcal C^2(T; \mathbb S)$ and $v\in H^2(T)$, one has
\begin{equation}\label{eq:greenidentitydivdiv}
\begin{aligned}
(\div\div\boldsymbol \sigma, v)_T&=(\boldsymbol \sigma, \nabla^2v)_T - ( \tr_1(\bs \sigma), \partial_nv)_{\partial T} +  ( \tr_2(\bs \sigma), v)_{\partial T} \\
&\quad -\sum_{e\in\Delta_{d-2}(T)}(\tr_e(\bs \sigma), v)_e, 
\end{aligned}
\end{equation}
where
\begin{equation}\label{eq:divdiv-traces}
\begin{aligned}
&\tr_1(\bs \sigma) = \boldsymbol  n_{\partial T}^{\intercal}\boldsymbol \sigma\boldsymbol  n_{\partial T}, \quad \tr_2(\bs \sigma) =  \boldsymbol n_{\partial T}^{\intercal}\div \boldsymbol \sigma +  \div_F(\boldsymbol\sigma \boldsymbol n_{\partial T}), \\
&\tr_e(\bs \sigma) = \sum_{F\in\partial T,e\in \partial F}\boldsymbol n_{F,e}^{\intercal}\boldsymbol \sigma \boldsymbol n_{\partial T}.
\end{aligned}
\end{equation}
\end{lemma}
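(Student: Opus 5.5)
Integrate by parts twice on \(T\) and then once more on each face \(F\subset\partial T\). This tangential integration produces the codimension-two terms.

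First integration by parts. For \(\boldsymbol\sigma\in\mathcal C^2(T;\mathbb S)\) and \(v\in H^2(T)\), the divergence theorem on \(T\) (valid by density for \(v\in H^1\)) gives
\[
(\div\div\boldsymbol\sigma,v)_T=-(\div\boldsymbol\sigma,\nabla v)_T+(\boldsymbol n_{\partial T}^{\intercal}\div\boldsymbol\sigma,v)_{\partial T}.
\]

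Second integration by parts. Applying it again to the first term, and using the symmetry of \(\boldsymbol\sigma\) so that \(\boldsymbol\sigma:\nabla(\nabla v)=\boldsymbol\sigma:\nabla^2 v\), we get
\[
-(\div\boldsymbol\sigma,\nabla v)_T=(\boldsymbol\sigma,\nabla^2v)_T-(\boldsymbol\sigma\boldsymbol n_{\partial T},\nabla v)_{\partial T}.
\]

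Splitting the boundary term. On each face \(F\) we decompose \(\nabla v=\boldsymbol n\,\partial_n v+\nabla_F v\), where \(\nabla_F\) is the surface gradient and \(\boldsymbol n=\boldsymbol n_{\partial T}|_F\). Then
\[
(\boldsymbol\sigma\boldsymbol n,\nabla v)_F=(\boldsymbol n^{\intercal}\boldsymbol\sigma\boldsymbol n,\partial_nv)_F+(\Pi_F\boldsymbol\sigma\boldsymbol n,\nabla_Fv)_F,
\]
with \(\Pi_F\) the tangential projection. The first piece is exactly the \(\tr_1\) term.

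Tangential integration on each face. For the second piece we use the divergence theorem on the flat \((d-1)\)-simplex \(F\):
\[
(\Pi_F\boldsymbol\sigma\boldsymbol n,\nabla_Fv)_F=-(\div_F(\boldsymbol\sigma\boldsymbol n),v)_F+\sum_{e\subset\partial F}(\boldsymbol n_{F,e}^{\intercal}\boldsymbol\sigma\boldsymbol n,v)_e.
\]
Here \(\div_F(\boldsymbol\sigma\boldsymbol n)\) means the surface divergence of the tangential part, and \(\boldsymbol n_{F,e}\) is tangent to \(F\), so only the tangential part contributes. With the overall minus sign, the face term \(+(\div_F(\boldsymbol\sigma\boldsymbol n),v)_F\) combines with \((\boldsymbol n^{\intercal}\div\boldsymbol\sigma,v)_F\) to form \(\tr_2\). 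The edge terms carry a minus sign.

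Regrouping over edges. Finally we reorganize the sum over pairs \((F,e)\) as a sum over \(e\in\Delta_{d-2}(T)\). Each such \(e\) lies on exactly two faces of \(T\), which yields \(\tr_e(\boldsymbol\sigma)\) as defined.

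Regularity and traces. Since \(v\in H^2(T)\) and \(d\ge2\), the traces \(v|_F\in H^{3/2}(F)\) and \(v|_e\) (codimension two, via \(H^1(F)\) trace) are well defined. The face integration by parts holds by density of smooth functions in \(H^2(T)\). All boundary quantities depend continuously on \(v\) in \(H^2(T)\), so it suffices to prove the identity for smooth \(v\) and pass to the limit.

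Main obstacle. The hardest part is the careful bookkeeping on faces. We must check that \(\div_F\) applied to \(\boldsymbol\sigma\boldsymbol n\) agrees with the surface divergence of its tangential projection, which holds because \(\boldsymbol n\) is constant on a flat face. We must also track the signs so that the edge contributions appear with the minus sign stated in \eqref{eq:greenidentitydivdiv}.
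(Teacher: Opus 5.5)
Your argument is correct and is the standard proof of this identity: two integrations by parts on $T$, then a normal--tangential splitting of $(\boldsymbol\sigma\boldsymbol n_{\partial T},\nabla v)_{\partial T}$, then the divergence theorem on each flat face $F$, and finally regrouping the edge terms over $e\in\Delta_{d-2}(T)$, each of which lies in exactly two faces of $T$. The paper does not reprove the lemma but cites \cite[Lemma~5.2]{ChenHuang2022} and \cite[Lemma~4.1]{ChenHuang2022a}, where essentially this computation is carried out; your density argument for $v\in H^2(T)$ also handles the face and codimension-two traces correctly.
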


 \subsection{A distributional mixed formulation}
 
 We formulate \eqref{eq:intro-mixed-system} in a symmetric-tensor distributional
 setting. The symmetric tensor variable \(\boldsymbol{\sigma}\) is kept as a
 primary unknown, with only the natural regularity dictated by the second
 equation in \eqref{eq:intro-mixed-system}. Since
 \(\div\div\boldsymbol{\sigma}\) is tested against functions in \(H_0^1(\Omega)\),
 it is natural to introduce
 \[
 H^{-1}(\div\div,\Omega;\mathbb S)
 :=
 \left\{
 \boldsymbol\tau\in L^2(\Omega;\mathbb S):
 \div\div\boldsymbol\tau\in H^{-1}(\Omega)
 \right\},
 \]
 equipped with the norm
 \[
 \|\boldsymbol\tau\|_{H^{-1}(\div\div)}^2
 :=
 \|\boldsymbol\tau\|_0^2+\|\div\div\boldsymbol\tau\|_{-1}^2,\;
 \|\div\div\boldsymbol\tau\|_{-1}
 :=
 \sup_{v\in H_0^1(\Omega),\,v\neq0}
 \frac{\langle \div\div\boldsymbol\tau,v\rangle}{|v|_1}.
 \]
 For the singularly perturbed problem, we also use the parameter-dependent norm
 \[
 \|\boldsymbol\tau\|_{\varepsilon^{-1}L^2\cap H^{-1}(\div\div)}^2
 :=
 \varepsilon^{-2}\|\boldsymbol\tau\|_0^2
 +\|\div\div\boldsymbol\tau\|_{-1}^2.
 \]
 The distributional mixed formulation is to find
 \((\boldsymbol{\sigma},u)\in H^{-1}(\div\div,\Omega;\mathbb S)\times H_0^1(\Omega)\)
 such that
 \begin{subequations}\label{eq:mixform}
 \begin{align}
 a(\boldsymbol{\sigma}, \boldsymbol{\tau})
 + b(\boldsymbol{\tau}, u)
 &= 0,
 && \forall\, \boldsymbol{\tau} \in
 H^{-1}(\div\div,\Omega;\mathbb S),
 \label{eq:mixform1}\\
 b(\boldsymbol{\sigma}, v) - c(u, v)
 &= -(f, v),
 && \forall\, v \in H_0^1(\Omega),
 \label{eq:mixform2}
 \end{align}
 \end{subequations}
 where
 \[
 a(\boldsymbol{\sigma}, \boldsymbol{\tau})
 := \varepsilon^{-2}(\boldsymbol{\sigma}, \boldsymbol{\tau}), \qquad
 b(\boldsymbol{\tau}, v)
 := -\langle \div\div \boldsymbol{\tau}, v \rangle, \qquad
 c(u, v)
 := (\nabla u, \nabla v).
 \]
The well-posedness of the distributional mixed formulation \eqref{eq:mixform} is governed by the exactness of the terminal part of the distributional $\div\div$ complex,
\begin{equation*}
	H^{-1}(\div\div,\Omega;\mathbb S)\xrightarrow{\div\div}H^{-1}(\Omega)\to0.
\end{equation*}
 
 \begin{theorem}\label{thm:mix-well-posed}
 The mixed formulation \eqref{eq:mixform} is well posed. More precisely, it
 admits a unique solution
 \((\boldsymbol{\sigma},u)\in H^{-1}(\div\div,\Omega;\mathbb S)\times H_0^1(\Omega)\)
 and satisfies
 \[
 \|\boldsymbol{\sigma}\|_{\varepsilon^{-1}L^2\cap H^{-1}(\div\div)}
 +|u|_1
 \lesssim
 \|f\|_{-1}.
 \]
 \end{theorem}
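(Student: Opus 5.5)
We will prove the result by applying the Brezzi theory for perturbed saddle-point problems with a penalty term $c(\cdot,\cdot)$, namely the version in which the bilinear form $c$ is symmetric positive semidefinite (Braess-type stability). This requires three ingredients.

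First, $a$ must be coercive on the kernel. On $Z:=\{\boldsymbol\tau: \div\div\boldsymbol\tau=0\}$ we have $a(\boldsymbol\tau,\boldsymbol\tau)=\varepsilon^{-2}\|\boldsymbol\tau\|_0^2$, which coincides with the squared $\varepsilon^{-1}L^2\cap H^{-1}(\div\div)$ norm there. Better, $a$ is coercive on all of $L^2$ in the $\varepsilon^{-1}L^2$ part, so a uniform argument is available.

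Second, $c$ is coercive: $c(v,v)=|v|_1^2$ on $H_0^1(\Omega)$.

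Third, and this is the key step, we need an inf-sup condition
\[
\sup_{\boldsymbol\tau}\frac{b(\boldsymbol\tau,v)}{\|\boldsymbol\tau\|_{\varepsilon^{-1}L^2\cap H^{-1}(\div\div)}}\gtrsim |v|_1
\]
with a constant uniform in $\varepsilon$. We would derive it from the surjectivity of $\div\div:H^{-1}(\div\div,\Omega;\mathbb S)\to H^{-1}(\Omega)$, that is, the exactness noted before the theorem. Concretely, for $g\in H^{-1}(\Omega)$ we construct $\boldsymbol\tau$ with $\div\div\boldsymbol\tau=g$ and $\|\boldsymbol\tau\|_0\lesssim\|g\|_{-1}$. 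To do so, solve $-\Delta w=g$ with $w\in H_0^1$. Then write $w=\div\boldsymbol\phi$ with $\boldsymbol\phi\in H^1_0(\Omega;\mathbb R^d)$, or simply take $\boldsymbol\phi\in H^1(\Omega;\mathbb R^d)$ via a right inverse of divergence on $L^2$ (Bogovskii, after removing the mean, or using a bounded domain extension). Set $\boldsymbol\tau=-\operatorname{sym}\nabla\boldsymbol\phi$, up to sign conventions.

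The uniformity in $\varepsilon$ is the main obstacle. The naive inf-sup gives $\varepsilon^{-1}\|\boldsymbol\tau\|_0$, which blows up as $\varepsilon\to0$. So we would not rely on the $b$ inf-sup alone. Instead, we use the combined stability: choose test functions $\boldsymbol\tau=\boldsymbol\sigma$ and $v=-u$ to get the energy identity
\[
\varepsilon^{-2}\|\boldsymbol\sigma\|_0^2+|u|_1^2=(f,u)\le\|f\|_{-1}|u|_1 .
\]
This bounds $\varepsilon^{-1}\|\boldsymbol\sigma\|_0+|u|_1\lesssim\|f\|_{-1}$ directly. Then from \eqref{eq:mixform2}, $\langle\div\div\boldsymbol\sigma,v\rangle=(f,v)-(\nabla u,\nabla v)$, so $\|\div\div\boldsymbol\sigma\|_{-1}\le\|f\|_{-1}+|u|_1$. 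This gives the a priori estimate with constants independent of $\varepsilon$.

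Existence and uniqueness then follow from this a priori estimate together with the (possibly $\varepsilon$-dependent) Brezzi/Babuška framework: the system operator is bounded and satisfies the Babuška inf-sup. The surjectivity construction above, giving $\|\boldsymbol\tau\|_0\lesssim\|g\|_{-1}$, is what is needed for the inf-sup of $b$ in the $H^{-1}(\div\div)$ norm. Alternatively, existence follows from equivalence with the primal problem: take $u\in H_0^2$ solving the weak form $\varepsilon^2(\nabla^2u,\nabla^2v)+(\nabla u,\nabla v)=(f,v)$, set $\boldsymbol\sigma=\varepsilon^2\nabla^2u$, and check both equations. The second equation extends from $H_0^2$ to $H_0^1$ test functions by density, since $\div\div\boldsymbol\sigma=f+\Delta u\in H^{-1}$. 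Uniqueness follows from the energy identity with $f=0$.
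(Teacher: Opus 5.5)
Your argument is correct, but it takes a different route from the paper. The paper applies Zulehner's abstract framework. By the definition of the dual norm, $a(\boldsymbol\tau,\boldsymbol\tau)+\sup_{v}b(\boldsymbol\tau,v)^2/|v|_1^2$ equals $\|\boldsymbol\tau\|_{\varepsilon^{-1}L^2\cap H^{-1}(\div\div)}^2$. Also, $c(v,v)+\sup_{\boldsymbol\tau}b(\boldsymbol\tau,v)^2/\|\boldsymbol\tau\|_{\varepsilon^{-1}L^2\cap H^{-1}(\div\div)}^2\eqsim|v|_1^2$ holds trivially, because $c(v,v)=|v|_1^2$ and $b$ is bounded. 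This shows in two lines that the full saddle-point operator is an isomorphism with $\varepsilon$-independent bounds, for arbitrary data in both equations.

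You instead get uniqueness and the stated estimate directly. The energy identity gives $\varepsilon^{-2}\|\boldsymbol\sigma\|_0^2+|u|_1^2=(f,u)$, and the second equation gives $\|\div\div\boldsymbol\sigma\|_{-1}\le\|f\|_{-1}+|u|_1$. Existence then comes from the primal $H_0^2$ problem. That existence step is the content of the paper's Lemma~\ref{lem:equivalence}. The paper proves that lemma \emph{after} the theorem and relies on the theorem's uniqueness there. Since you obtain uniqueness independently from the energy identity, your ordering is not circular.

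Your route is elementary and self-contained, needing only Lax--Milgram on $H_0^2(\Omega)$. The paper's route is shorter and gives the stronger operator-level stability. Your energy argument would extend to a nonzero right-hand side in the first equation by Young's inequality.

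Two side remarks. First, your final argument does not use the surjectivity of $\div\div$ onto $H^{-1}(\Omega)$, and neither does the paper's verification of Zulehner's second condition. In both, the coercivity of $c$ on $H_0^1(\Omega)$ replaces a uniform inf-sup for $b$, which, as you observe, degenerates like $\varepsilon$. Second, your sentence that the system ``satisfies the Babu\v{s}ka inf-sup'' is not justified by the energy estimate as written. That estimate was derived only for data of the form $(0,-f)$. Rely on the primal-equivalence argument for existence.
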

 
 \begin{proof}
 For any \(\boldsymbol\tau\in H^{-1}(\div\div,\Omega;\mathbb S)\), the definition
 of the dual norm gives
 \[
 a(\boldsymbol\tau,\boldsymbol\tau)
 +\sup_{v\in H_0^1(\Omega),\,v\neq0}
 \frac{b(\boldsymbol\tau,v)^2}{|v|_1^2}
 =
 \|\boldsymbol\tau\|_{\varepsilon^{-1}L^2\cap H^{-1}(\div\div)}^2.
 \]
 Moreover, for all \(v\in H_0^1(\Omega)\),
 \[
 c(v,v)
 +\sup_{\boldsymbol\tau\in H^{-1}(\div\div,\Omega;\mathbb S),\,\boldsymbol\tau\neq0}
 \frac{b(\boldsymbol\tau,v)^2}
 {\|\boldsymbol\tau\|_{\varepsilon^{-1}L^2\cap H^{-1}(\div\div)}^2}
 \eqsim |v|_1^2.
 \]
Zulehner's theory \cite{Zulehner2011} therefore implies that the mixed formulation
 \eqref{eq:mixform} is well posed. The stated stability estimate follows from
 \(|(f,v)|\le \|f\|_{-1}|v|_1\) for all \(v\in H_0^1(\Omega)\).
 \end{proof}
 
 For comparison, we recall the standard primal weak formulation of
 \eqref{eq:fourthorderequation}: find \(u\in H_0^2(\Omega)\) such that
 \begin{equation}\label{eq:primal-weak}
 \varepsilon^2(\nabla^2u,\nabla^2v)+(\nabla u,\nabla v)=(f,v)
 \qquad
 \forall\,v\in H_0^2(\Omega).
 \end{equation}
 Since the bilinear form on the left-hand side of \eqref{eq:primal-weak} is
 continuous and coercive on \(H_0^2(\Omega)\), problem \eqref{eq:primal-weak}
 admits a unique solution by the Lax--Milgram theorem.
 
 \begin{lemma}\label{lem:equivalence}
 The mixed formulation \eqref{eq:mixform} is equivalent to the primal
 formulation \eqref{eq:primal-weak}.
 \end{lemma}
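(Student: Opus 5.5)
We prove the two directions separately. Since \eqref{eq:mixform} is well posed by Theorem~\ref{thm:mix-well-posed} and \eqref{eq:primal-weak} by Lax--Milgram, it suffices to show that the solution of one problem produces a solution of the other.

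\emph{Primal to mixed.} Let \(u\in H_0^2(\Omega)\) solve \eqref{eq:primal-weak} and set \(\boldsymbol\sigma:=\varepsilon^2\nabla^2u\in L^2(\Omega;\mathbb S)\). For \(\phi\in C_0^\infty(\Omega)\), the definition of the distributional derivative gives \(\langle\div\div\boldsymbol\sigma,\phi\rangle=(\boldsymbol\sigma,\nabla^2\phi)\). Testing \eqref{eq:primal-weak} with \(\phi\) yields
\[
\langle\div\div\boldsymbol\sigma,\phi\rangle=(f,\phi)-(\nabla u,\nabla\phi).
\]
The right-hand side is bounded by \((\|f\|_{-1}+|u|_1)|\phi|_1\). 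By density of \(C_0^\infty(\Omega)\) in \(H_0^1(\Omega)\), \(\div\div\boldsymbol\sigma\) extends to an element of \(H^{-1}(\Omega)\), and the identity holds for all \(v\in H_0^1(\Omega)\). Hence \(\boldsymbol\sigma\in H^{-1}(\div\div,\Omega;\mathbb S)\), and this identity is exactly \eqref{eq:mixform2}.

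For \eqref{eq:mixform1} we need \(\varepsilon^{-2}(\boldsymbol\sigma,\boldsymbol\tau)=\langle\div\div\boldsymbol\tau,u\rangle\) for all \(\boldsymbol\tau\in H^{-1}(\div\div,\Omega;\mathbb S)\), that is, \((\nabla^2u,\boldsymbol\tau)=\langle\div\div\boldsymbol\tau,u\rangle\). Choose \(u_n\in C_0^\infty(\Omega)\) with \(u_n\to u\) in \(H^2\). By the definition of the distribution \(\div\div\boldsymbol\tau\), we have \((\nabla^2u_n,\boldsymbol\tau)=\langle\div\div\boldsymbol\tau,u_n\rangle\). The left side converges because \(\boldsymbol\tau\in L^2\). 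The right side converges because \(\div\div\boldsymbol\tau\in H^{-1}\) and \(u_n\to u\) in \(H^1_0\). This gives \eqref{eq:mixform1}.

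\emph{Mixed to primal.} Let \((\boldsymbol\sigma,u)\) solve \eqref{eq:mixform}. The key step is to show \(u\in H_0^2(\Omega)\) with \(\varepsilon^{-2}\boldsymbol\sigma=\nabla^2u\); this is the main point. First take \(\boldsymbol\tau\in C_0^\infty(\Omega;\mathbb S)\) in \eqref{eq:mixform1}. This gives \((\varepsilon^{-2}\boldsymbol\sigma,\boldsymbol\tau)=(u,\div\div\boldsymbol\tau)\), so \(\nabla^2u=\varepsilon^{-2}\boldsymbol\sigma\in L^2\) distributionally. Combined with \(u\in H^1\), this gives \(u\in H^2(\Omega)\), via \(\nabla u\in L^2\) with \(\nabla(\nabla u)\in L^2\).

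To obtain \(\partial_nu=0\), extend \(u\) by zero to \(\tilde u\) on \(\mathbb R^d\). Take \(\boldsymbol\tau\in C^\infty(\overline\Omega;\mathbb S)\) in \eqref{eq:mixform1}; such \(\boldsymbol\tau\) lie in \(H^{-1}(\div\div)\) since \(\div\div\boldsymbol\tau\in L^2\). Integration by parts with \(u=0\) on \(\partial\Omega\) leaves
\[
(\nabla^2u,\boldsymbol\tau)-(u,\div\div\boldsymbol\tau)=\langle\boldsymbol n^\intercal\boldsymbol\tau\boldsymbol n,\partial_nu\rangle_{\partial\Omega}.
\]
This is a face-by-face consequence of the Green identity \eqref{eq:greenidentitydivdiv} type computation, and the \(\tr_2\) and edge terms vanish because \(u=0\) on \(\partial\Omega\). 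The boundary term must therefore vanish for every such \(\boldsymbol\tau\).

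Choosing \(\boldsymbol\tau=\psi\,\boldsymbol n_F\boldsymbol n_F^\intercal\) with \(\psi\) smooth and compactly supported near the interior of a boundary face \(F\) gives \(\partial_nu=0\) on each face. Hence \(u\in H_0^2(\Omega)\). An alternative that avoids traces is to check that \(\nabla\tilde u\in H^1(\mathbb R^d)\) distributionally using such \(\boldsymbol\tau\).

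Finally, for \(v\in C_0^\infty(\Omega)\), equation \eqref{eq:mixform2} gives \((\boldsymbol\sigma,\nabla^2v)+(\nabla u,\nabla v)=(f,v)\), that is, \(\varepsilon^2(\nabla^2u,\nabla^2v)+(\nabla u,\nabla v)=(f,v)\). Density of \(C_0^\infty(\Omega)\) in \(H_0^2(\Omega)\) extends this to all \(v\in H_0^2(\Omega)\), so \(u\) solves \eqref{eq:primal-weak}. Uniqueness of both problems then completes the equivalence.
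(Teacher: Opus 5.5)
Your proof is correct. The primal-to-mixed direction, combined with uniqueness of both problems, is exactly the paper's argument, and you supply the density details behind $\div\div\boldsymbol\sigma\in H^{-1}(\Omega)$ and $\langle\div\div\boldsymbol\tau,u\rangle=(\boldsymbol\tau,\nabla^2u)$ that the paper only asserts. The mixed-to-primal direction, where you show directly that $u\in H_0^2(\Omega)$, is also valid but redundant once uniqueness is invoked, as you note yourself at the outset.
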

 
 \begin{proof}
 By the uniqueness of the mixed formulation \eqref{eq:mixform} and the primal
 formulation \eqref{eq:primal-weak}, it
 suffices to show that if \(u\in H_0^2(\Omega)\) solves
 \eqref{eq:primal-weak} and
 \(\boldsymbol{\sigma}=\varepsilon^2\nabla^2u\), then
 \((\boldsymbol{\sigma},u)\) solves \eqref{eq:mixform}.
 
 Clearly, \(\boldsymbol{\sigma}\in L^2(\Omega;\mathbb S)\). Moreover,
 \eqref{eq:primal-weak} implies
 \(\div\div\boldsymbol{\sigma}=f+\Delta u\in H^{-1}(\Omega)\), and hence
 \(\boldsymbol{\sigma}\in H^{-1}(\div\div,\Omega;\mathbb S)\). Since
 \(u\in H_0^2(\Omega)\), we have
 \[
 \langle \div\div\boldsymbol\tau,u\rangle=(\boldsymbol\tau,\nabla^2u)
 \qquad
 \forall\,\boldsymbol\tau\in H^{-1}(\div\div,\Omega;\mathbb S),
 \]
 which, together with \(\boldsymbol{\sigma}=\varepsilon^2\nabla^2u\), yields
 \eqref{eq:mixform1}.
 
 For any \(v\in C_0^\infty(\Omega)\), \eqref{eq:primal-weak} gives
 \[
 \langle \div\div\boldsymbol{\sigma},v\rangle+(\nabla u,\nabla v)=(f,v).
 \]
 By density of \(C_0^\infty(\Omega)\) in \(H_0^1(\Omega)\), this identity
 extends to all \(v\in H_0^1(\Omega)\), and thus \eqref{eq:mixform2} follows.
 \end{proof}

 \subsection{Regularity assumptions}
 To state the regularity assumptions used in the robust error analysis, we let
 \(\bar u\) denote the solution of the limiting Poisson problem
 \begin{equation}\label{eq:poisson}
 \begin{cases}
 -\Delta \bar{u} = f, & \text{in } \Omega, \\
 \bar{u} = 0, & \text{on } \partial\Omega.
 \end{cases}
 \end{equation}
We regard \(\bar u\) as the regular part of the solution, and
 \(u-\bar u\) as the singularly perturbed remainder.
 
 We assume the elliptic regularity estimate: for some \(s\ge2\),
 \begin{equation}\label{eq:regularity-assumption}
 \|\bar{u}\|_{s} \lesssim \|f\|_{s-2}.
 \end{equation}
 If \(\Omega\) is semiconvex, or if the closure of \(\Omega\) has uniformly
 positive reach, estimate \eqref{eq:regularity-assumption} for \(s=2\) holds;
 see
 \cite{Kadlec1964,Talenti1965,Adolfsson1992,MitreaMitreaYan2010,GaoLai2020}.
 In particular, every convex domain is semiconvex. 
 
 We further assume that
 \begin{equation}\label{eq:perturbation-regularity}
 |u-\bar{u}|_{1}
 + \varepsilon |u|_{2}
 + \varepsilon^{2} |u|_{3}
 \lesssim \varepsilon^{1/2}\|f\|_{0}.
 \end{equation}
 Estimate \eqref{eq:perturbation-regularity} is available on convex domains in
 two and three dimensions; see, for example,
 \cite{GuzmanLeykekhmanNeilan2012,NilssenTaiWinther2001}. More recently, \cite[Theorem~2]{LiMingZhou2025TRUNC} established a related
 arbitrary-dimensional counterpart of \eqref{eq:perturbation-regularity}.
 
 
 \section{Discrete Spaces, Weak Hessian and Norm Equivalences}\label{sec:discrete-spaces}
 
 This section introduces the discrete spaces and interpolation operators used in the mixed method, and then proves the weak-Hessian norm equivalence on which the stability analysis rests.

 \subsection{Normal--normal continuous finite element spaces for symmetric tensors}
Finite elements for symmetric tensors with normal--normal continuity have
been constructed in
\cite{Hellan1967,Herrmann1967,Johnson1973,PechsteinSchoeberl2011,
ChenHuang2025,HuLin2025,BerchenkoKoganGawlik2025}. 
We use here a
tangential--normal construction whose boundary degrees of freedom (DoFs) are supported only on faces.

For each simplex \(T\in\mathcal T_h\) and each integer \(k\ge 1\), we
take \(\mathbb P_k(T;\mathbb S)\) as the local shape function space. The
DoFs are defined as follows:
\begin{subequations}\label{eq:nnDoFs}
\begin{align}
\bigl(\boldsymbol\tau_{nn},q\bigr)_F,
&\qquad q\in\mathbb P_k(F),\quad F\in\Delta_{d-1}(T),
\label{eq:nnDoFs1}\\
\bigl(\boldsymbol n_i^{\intercal}\boldsymbol\tau\,\boldsymbol t_{0,j},q\bigr)_{F_i},
&\qquad q\in\mathbb P_k(F_i),\quad
i=1,\ldots,d-2,\quad j=i+1,\ldots,d,
\label{eq:nnDoFs2}\\
(\boldsymbol\tau,\boldsymbol q)_T,
&\qquad \boldsymbol q\in\mathbb P_{k-1}(T;\mathbb S).
\label{eq:nnDoFs3}
\end{align}
\end{subequations}
For \(d=2\), the DoFs \eqref{eq:nnDoFs2} are absent.
The boundary DoFs \eqref{eq:nnDoFs1}--\eqref{eq:nnDoFs2}
are associated only with the \((d-1)\)-dimensional faces of \(T\). The
interior moments in \eqref{eq:nnDoFs3} are taken against the full
symmetric tensor-valued polynomial space of degree at most \(k-1\).

To prove unisolvence of the DoFs \eqref{eq:nnDoFs} for \(\mathbb P_k(T;\mathbb S)\), we use the following direct-sum decomposition of \(\mathbb S\), adapted to the tangential--normal decomposition of \cite{Chen2024,ChenChenHuangWei2024}.

 \begin{lemma}\label{lem:basis-S}
 The space \(\mathbb S\) admits the direct sum decomposition
 \begin{equation}\label{eq:S-decomposition}
 \mathbb S
 =
 \bigoplus_{i=0}^{d} \operatorname{span}\{\boldsymbol n_i \boldsymbol n_i^{\intercal}\}
 \oplus
 \bigoplus_{i=1}^{d-2}
 \bigoplus_{j=i+1}^{d}
 \operatorname{span}\{\operatorname{sym}(\boldsymbol n_i \boldsymbol t_{0,j}^{\intercal})\}.
 \end{equation}
 \end{lemma}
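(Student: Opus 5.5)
Counting first: $\dim\mathbb S=d(d+1)/2$. The right-hand side of \eqref{eq:S-decomposition} has $(d+1)$ rank-one terms plus $\sum_{i=1}^{d-2}(d-i)=\sum_{m=2}^{d-1}m=d(d-1)/2-1$ symmetrized terms. The total is $d+1+d(d-1)/2-1=d(d+1)/2$. So it suffices to show that the $d(d+1)/2$ listed tensors span $\mathbb S$, or equivalently that they are linearly independent. I will show that any symmetric $\boldsymbol\tau$ orthogonal (Frobenius) to all of them vanishes, i.e. that a linear system of pairings has only the trivial solution.

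\textbf{Orthogonality conditions.} Orthogonality to $\boldsymbol n_i\boldsymbol n_i^{\intercal}$ gives $\boldsymbol n_i^{\intercal}\boldsymbol\tau\boldsymbol n_i=0$ for $i=0,\dots,d$. Orthogonality to $\operatorname{sym}(\boldsymbol n_i\boldsymbol t_{0,j}^{\intercal})$ gives $\boldsymbol n_i^{\intercal}\boldsymbol\tau\boldsymbol t_{0,j}=0$ for $1\le i\le d-2$ and $i<j\le d$. The vectors $\{\boldsymbol t_{0,j}\}_{j=1}^d$ form a basis of $\mathbb R^d$. Moreover $\boldsymbol n_i\perp\boldsymbol t_{0,j}$ for $j\ne i$, $i\ge1$, because $F_i$ contains $\texttt v_0$ and $\texttt v_j$, while $\boldsymbol n_i\cdot\boldsymbol t_{0,i}\neq0$. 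Up to scaling, $\{\boldsymbol n_i\}_{i=1}^d$ is therefore the dual basis of $\{\boldsymbol t_{0,j}\}$. I expand $\boldsymbol\tau=\sum_{k,l}c_{kl}\,\boldsymbol n_k\boldsymbol n_l^{\intercal}$ with $c$ symmetric. Then
\[
\boldsymbol n_i^{\intercal}\boldsymbol\tau\boldsymbol t_{0,j}=(\boldsymbol n_j\cdot\boldsymbol t_{0,j})\,(\boldsymbol n_i^{\intercal}\boldsymbol\tau\boldsymbol n_j\text{-type entry})\propto \tilde c_{ij},
\]
and it is cleanest to work in the dual coordinates $\tilde c_{ij}:=\boldsymbol t_{0,i}^{\intercal}\boldsymbol\tau\boldsymbol t_{0,j}$ or, better, in $m_{ij}:=\boldsymbol n_i^{\intercal}\boldsymbol\tau\boldsymbol t_{0,j}$. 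Since $\boldsymbol\tau\boldsymbol t_{0,j}$ is determined by its pairings with the $\boldsymbol n_i$, and $\boldsymbol n_i=\alpha_i\sum(\text{dual})$, I will write $\boldsymbol\tau$ in the basis $\{\boldsymbol n_i\}$. The upshot is that the conditions say the symmetric $d\times d$ matrix $A_{ij}:=\boldsymbol n_i^{\intercal}\boldsymbol\tau\boldsymbol n_j$ (for $i,j\ge1$) has the following properties:
\begin{itemize}
\item zero diagonal (from the $\boldsymbol n_i\boldsymbol n_i^{\intercal}$ terms, $i\ge1$);
\item $A_{ij}=0$ whenever $\min(i,j)\le d-2$ and $i\ne j$, which follows by expressing $\boldsymbol t_{0,j}$ via $\boldsymbol n$'s.
\end{itemize}
This expression must be checked carefully. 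Since $\boldsymbol n_i^{\intercal}\boldsymbol\tau\boldsymbol t_{0,j}$ is not directly $A_{ij}$, I will instead use coordinates $B_{ij}:=\boldsymbol n_i^{\intercal}\boldsymbol\tau\boldsymbol t_{0,j}$ and the identity $\boldsymbol\tau=\sum_{i,j}B_{ij}\,\boldsymbol t_{0,i}^{*}\ldots$. The robust route is: the vectors $\boldsymbol n_i/(\boldsymbol n_i\cdot\boldsymbol t_{0,i})$ are exactly the dual basis. Hence $\boldsymbol\tau=\sum_{i,j}\beta_{ij}\,\hat{\boldsymbol n}_i\hat{\boldsymbol n}_j^{\intercal}$ with $\beta_{ij}=\boldsymbol t_{0,i}^{\intercal}\boldsymbol\tau\boldsymbol t_{0,j}$ symmetric, and $\boldsymbol n_i^{\intercal}\boldsymbol\tau\boldsymbol t_{0,j}$ is a nonzero multiple of $\boldsymbol t_{0,i}^{*}$-paired entries. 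Concretely, I will express $\boldsymbol\tau$ as $\sum\gamma_{kl}\boldsymbol n_k\boldsymbol n_l^{\intercal}$. Then $\boldsymbol n_i^{\intercal}\boldsymbol\tau\boldsymbol t_{0,j}=\sum_k (\boldsymbol n_i\cdot\boldsymbol n_k)\gamma_{kj}(\boldsymbol n_j\cdot\boldsymbol t_{0,j})$, which is not diagonalizing. So I will choose the opposite expansion $\boldsymbol\tau=\sum\beta_{kl}\hat{\boldsymbol n}_k\hat{\boldsymbol n}_l^{\intercal}$ and pair $\boldsymbol n_i^{\intercal}\boldsymbol\tau\boldsymbol t_{0,j}$ through $\boldsymbol t$. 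I expect the coordinate bookkeeping here to be the main obstacle, and I will resolve it by testing against $\boldsymbol t_{0,j}$ on the right.

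\textbf{Counting argument.} For $i\ge1$ the surviving unknowns after the mixed conditions are $\boldsymbol\tau$-entries involving the pair $(d-1,d)$ and the diagonal entries. That is $d+1$ effective unknowns (namely $d$ diagonal entries plus the $(d-1,d)$ entry), matched against the $d+1$ normal–normal conditions $i=0,\dots,d$.

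\textbf{Closing with $\boldsymbol n_0$.} Write $\boldsymbol n_0=-\sum_{i=1}^d w_i\boldsymbol n_i$ with all $w_i>0$, using $\sum_i|F_i|\boldsymbol n_i=0$. This reduces the problem to a small system: $\tau_{nn}$ vanishes on $F_1,\dots,F_d$, and the only remaining coupling is a single off-diagonal entry $A_{d-1,d}$. The condition on $F_0$ then gives $2w_{d-1}w_dA_{d-1,d}=0$, hence $A=0$ and $\boldsymbol\tau=0$. Finally, I will double-check that the mixed conditions kill all off-diagonal pairs other than $(d-1,d)$, using the symmetry of $\boldsymbol\tau$ for pairs with $i>j$. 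This reindexing is where I expect errors to occur.
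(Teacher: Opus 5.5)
The decisive step of your argument is missing. Everything hinges on the claim that $A_{ij}:=\boldsymbol n_i^{\intercal}\boldsymbol\tau\boldsymbol n_j$ ($i,j\ge1$) vanishes for $i\neq j$ with $\min(i,j)\le d-2$, but you never prove it. Each coordinate expansion you start is abandoned. The mechanism you propose (``expressing $\boldsymbol t_{0,j}$ via $\boldsymbol n$'s'', ``the mixed conditions kill all off-diagonal pairs'') cannot work by a change of coordinates alone. For $d\ge3$, the tensor $\boldsymbol\tau=\boldsymbol n_1\boldsymbol n_1^{\intercal}$ satisfies every mixed condition $\boldsymbol n_i^{\intercal}\boldsymbol\tau\boldsymbol t_{0,j}=0$ ($1\le i\le d-2$, $i<j\le d$), because $\boldsymbol n_1\perp\boldsymbol t_{0,j}$ for $j\ge2$. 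Yet $A_{12}=\boldsymbol n_1\cdot\boldsymbol n_2\neq0$ for a general simplex (e.g.\ $-1/d$ for the regular one). So the normal--normal conditions on $F_1,\dots,F_{d-2}$ must enter, and in a definite order. Your ``counting argument'' paragraph does not supply this either, since matching the number of unknowns and equations says nothing about nonsingularity.

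The missing idea is a forward induction on $i=1,\dots,d-2$ showing $\boldsymbol\tau\boldsymbol n_i=\boldsymbol 0$. The vector $\boldsymbol\tau\boldsymbol n_i$ is orthogonal to:
\begin{itemize}
\item $\boldsymbol t_{0,i+1},\dots,\boldsymbol t_{0,d}$, by the mixed conditions;
\item $\boldsymbol n_1,\dots,\boldsymbol n_{i-1}$, by symmetry of $\boldsymbol\tau$ and the induction hypothesis;
\item $\boldsymbol n_i$, by the condition on $F_i$.
\end{itemize}
Because $\boldsymbol n_1,\dots,\boldsymbol n_i$ are linearly independent and orthogonal to $\boldsymbol t_{0,i+1},\dots,\boldsymbol t_{0,d}$, these $d$ vectors span $\mathbb R^d$, so $\boldsymbol\tau\boldsymbol n_i=\boldsymbol 0$. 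This yields exactly the structure you want, with $A_{d-1,d-1}=A_{dd}=0$ coming from $F_{d-1},F_d$. Your closing step is then correct: $\boldsymbol n_0=-\sum_{i\ge1}w_i\boldsymbol n_i$ gives $2w_{d-1}w_dA_{d-1,d}=0$, hence $\boldsymbol\tau=0$ because $\{\boldsymbol n_i\}_{i=1}^d$ is a basis.

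With this step supplied, your annihilator route is a valid alternative to the paper's proof. The paper instead shows linear independence directly. It right-multiplies a vanishing combination by $\boldsymbol t_{0,d}$ and $\boldsymbol t_{0,d-1}$ to kill $C_{00},C_{dd},C_{d-1,d-1}$. It then runs a backward induction $m=d-2,\dots,1$ using the nonsingular Gram matrix of $\{\boldsymbol t_{0,m+1},\dots,\boldsymbol t_{0,d}\}$. Your version avoids Gram matrices and shows directly that the face functionals in \eqref{eq:nnDoFs1}--\eqref{eq:nnDoFs2} determine a constant symmetric tensor.
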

 
 \begin{proof}
 The number of tensors on the right-hand side of
 \eqref{eq:S-decomposition} is
 \[
 (d+1)+\sum_{i=1}^{d-2}(d-i)=\frac12 d(d+1)=\dim\mathbb S.
 \]
 It therefore suffices to prove linear independence. Suppose that
 \begin{equation}\label{eq:lin-indep}
 \sum_{i=0}^{d} C_{ii}\,\boldsymbol n_i\boldsymbol n_i^{\intercal}
 +
 \sum_{i=1}^{d-2}\sum_{j=i+1}^{d}
 C_{ij}\,\operatorname{sym}(\boldsymbol n_i\boldsymbol t_{0,j}^{\intercal})
 =\boldsymbol 0,
 \end{equation}
 with \(C_{ii},C_{ij}\in\mathbb R\). We use
 \[
 \boldsymbol n_i^{\intercal}\boldsymbol t_{0,j}=0
 \quad\text{for }1\le i\ne j\le d;
 \qquad
 \boldsymbol n_0^{\intercal}\boldsymbol t_{0,j}\ne0,\quad
 \boldsymbol n_j^{\intercal}\boldsymbol t_{0,j}\ne0
 \quad\text{for }j=1,\dots,d.
 \]
 Multiplying \eqref{eq:lin-indep} on the right by \(\boldsymbol t_{0,d}\) gives
 \[
 C_{00}\boldsymbol n_0(\boldsymbol n_0^{\intercal}\boldsymbol t_{0,d})
 +
 C_{dd}\boldsymbol n_d(\boldsymbol n_d^{\intercal}\boldsymbol t_{0,d})
 +
 \frac12\sum_{i=1}^{d-2}\sum_{j=i+1}^{d}
 C_{ij}\boldsymbol n_i(\boldsymbol t_{0,j}^{\intercal}\boldsymbol t_{0,d})
 =\boldsymbol 0.
 \]
 Since
 \(\{\boldsymbol n_0,\boldsymbol n_1,\ldots,\boldsymbol n_{d-2},\boldsymbol n_d\}\)
 is a basis of \(\mathbb R^d\), we obtain \(C_{00}=C_{dd}=0\). Similarly,
 multiplication by \(\boldsymbol t_{0,d-1}\) yields \(C_{d-1,d-1}=0\).
 
Now proceed by backward induction on $m=d-2, \ldots, 1$. Assume that
 \(C_{ij}=0\) for all \(i=m+1,\dots,d-2\) and \(j=i,\dots,d\). 
Then equation \eqref{eq:lin-indep} reduces to
 \begin{equation}\label{eq:lin-indepm}
 \sum_{i=1}^{m} C_{ii}\,\boldsymbol n_i\boldsymbol n_i^{\intercal}
 +
 \sum_{i=1}^{m}\sum_{j=i+1}^{d}
 C_{ij}\,\operatorname{sym}(\boldsymbol n_i\boldsymbol t_{0,j}^{\intercal})
 =\boldsymbol 0.
 \end{equation}
Multiplying
 \eqref{eq:lin-indepm} on the right by \(\boldsymbol t_{0,\ell}\) for
 \(\ell=m+1,\dots,d\), and then extracting the coefficient of \(\boldsymbol n_m\), we obtain
 \[
 \sum_{j=m+1}^{d}(\boldsymbol t_{0,\ell}\cdot \boldsymbol t_{0,j})C_{mj}
 =0,
 \qquad \ell=m+1,\dots,d.
 \]
 Since \(\{\boldsymbol t_{0,m+1},\dots,\boldsymbol t_{0,d}\}\) is linearly
 independent, its Gram matrix is nonsingular. Hence
 \[
 C_{m,m+1}=C_{m,m+2}=\cdots=C_{m,d}=0.
 \]
 Multiplying \eqref{eq:lin-indepm} on the right by \(\boldsymbol t_{0,m}\)
 then gives \(C_{mm}=0\).
 
 Hence all coefficients in \eqref{eq:lin-indep} vanish, proving linear
 independence.
 \end{proof}
 
 Let \(\{N_m\}_{m=1}^{\dim\mathbb S}\) be the basis of \(\mathbb S\) given by
 \eqref{eq:S-decomposition}, and let \(\{N_m'\}_{m=1}^{\dim\mathbb S}\) be the
 dual basis, defined by \(N_m':N_\ell=\delta_{m\ell}\). 
Here $\delta_{m\ell}$ denotes the Kronecker delta.

 \begin{lemma}\label{lem:nn-unisolvence}
 For \(k\ge1\), the DoFs \eqref{eq:nnDoFs} are unisolvent for
 \(\mathbb P_k(T;\mathbb S)\).
 \end{lemma}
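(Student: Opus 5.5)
The proof has two parts: a dimension count, then a check that a tensor with vanishing DoFs must vanish.

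\textbf{Dimension count.} Each face \(F_i\) carries \(\dim\mathbb P_k(F)\) moments of \(\boldsymbol\tau_{nn}\) from \eqref{eq:nnDoFs1}. Each face \(F_i\) with \(1\le i\le d-2\) additionally carries \(d-i\) tangential--normal moments from \eqref{eq:nnDoFs2}. Together these give \((d+1)+\sum_{i=1}^{d-2}(d-i)=\dim\mathbb S\) face-weighted blocks, as counted in the proof of Lemma~\ref{lem:basis-S}. Hence the boundary DoFs number \(\dim\mathbb S\cdot\dim\mathbb P_k(\mathbb R^{d-1})\). The interior DoFs \eqref{eq:nnDoFs3} number \(\dim\mathbb S\cdot\dim\mathbb P_{k-1}(T)\). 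Since \(\dim\mathbb P_k(T)=\dim\mathbb P_{k-1}(T)+\dim\mathbb P_k(\mathbb R^{d-1})\), the total equals \(\dim\mathbb P_k(T;\mathbb S)\).

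\textbf{Vanishing argument.} Take \(\boldsymbol\tau\in\mathbb P_k(T;\mathbb S)\) with all DoFs zero. Expand it in the basis of Lemma~\ref{lem:basis-S} as \(\boldsymbol\tau=\sum_m (\boldsymbol\tau:N_m')N_m\), and write \(\phi_m:=\boldsymbol\tau:N_m'\). Each basis tensor \(N_m\) is associated with a face: \(\boldsymbol n_i\boldsymbol n_i^{\intercal}\) with \(F_i\), and \(\operatorname{sym}(\boldsymbol n_i\boldsymbol t_{0,j}^{\intercal})\) with \(F_i\). The goal is to show \(\phi_m|_{F_i}=0\) for each \(m\) associated with \(F_i\). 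Then \(\phi_m=\lambda_i\psi_m\) with \(\psi_m\in\mathbb P_{k-1}(T)\), so
\[
\boldsymbol\tau=\sum_m\lambda_{i(m)}\psi_m N_m .
\]
Testing \eqref{eq:nnDoFs3} with \(\boldsymbol q=\sum_m\psi_mN_m'\in\mathbb P_{k-1}(T;\mathbb S)\) gives
\[
0=(\boldsymbol\tau,\boldsymbol q)_T=\sum_m(\lambda_{i(m)}\psi_m,\psi_m)_T
\]
by duality. Since each \(\lambda_i>0\) in the interior of \(T\), every \(\psi_m\) vanishes, and so \(\boldsymbol\tau=0\).

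\textbf{Main obstacle: the face traces.} We must show the vanishing of \(\phi_m\) on faces. On \(F_i\), the available zero traces are \(\boldsymbol n_i^{\intercal}\boldsymbol\tau\boldsymbol n_i\) and, for \(1\le i\le d-2\), \(\boldsymbol n_i^{\intercal}\boldsymbol\tau\boldsymbol t_{0,j}\) for \(j>i\). These span the pairings of \(\boldsymbol\tau\) with the tensors \(N_m\) associated with \(F_i\). They do not directly equal the dual coefficients \(\phi_m\). The plan is to change the dual basis. For each face \(F_i\), I would use as the "interior" test tensors those whose pairings with \(\boldsymbol\tau\) are precisely \(\boldsymbol n_i^{\intercal}\boldsymbol\tau\boldsymbol n_i\) and \(\boldsymbol n_i^{\intercal}\boldsymbol\tau\boldsymbol t_{0,j}\). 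In other words, I would apply the argument above with the roles of \(N_m\) and \(N_m'\) interchanged.

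Concretely, set \(\phi_m:=\boldsymbol\tau:N_m\), so that \(\boldsymbol\tau=\sum_m\phi_mN_m'\). On \(F_i\), the vanishing boundary DoFs give \(\phi_m|_{F_i}=0\) for every \(m\) associated with \(F_i\). For \(\boldsymbol n_i\boldsymbol n_i^{\intercal}\) this pairing is \(\boldsymbol\tau_{nn}\). For \(\operatorname{sym}(\boldsymbol n_i\boldsymbol t_{0,j}^{\intercal})\) it is \(\boldsymbol n_i^{\intercal}\boldsymbol\tau\boldsymbol t_{0,j}\), because \(\boldsymbol\tau\) is symmetric. Both traces lie in \(\mathbb P_k(F_i)\), so their vanishing moments force them to vanish identically. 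Factoring \(\phi_m=\lambda_{i(m)}\psi_m\) and testing with \(\boldsymbol q=\sum_m\psi_mN_m\) then gives \(\sum_m(\lambda_{i(m)}\psi_m,\psi_m)_T=0\), hence \(\psi_m=0\).

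Linear independence from Lemma~\ref{lem:basis-S} is essential here: it guarantees that the \(N_m\) form a basis with a dual basis. The index bookkeeping requires care at the faces \(F_{d-1}\) and \(F_d\). Those faces, like \(F_0\), carry only the normal--normal trace, and this is consistent with the decomposition \eqref{eq:S-decomposition}.
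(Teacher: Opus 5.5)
Your proof is correct and, once you switch to the corrected expansion \(\boldsymbol\tau=\sum_m(\boldsymbol\tau:N_m)N_m'\), it is essentially the paper's argument. It uses the same dimension count, the same identification of \(\boldsymbol\tau:N_m\) with the face traces \(\boldsymbol n_i^{\intercal}\boldsymbol\tau\boldsymbol n_i\) and \(\boldsymbol n_i^{\intercal}\boldsymbol\tau\boldsymbol t_{0,j}\), the same factorization by \(\lambda_{i(m)}\), and the same interior test (the paper tests with each \(q_mN_m\) separately while you test with the sum, which is equivalent because every term is nonnegative); the first version of your vanishing paragraph, with the roles of \(N_m\) and \(N_m'\) reversed, does not work as written and should simply be deleted.
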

 
 \begin{proof}
 Using
 \[
 (d+1)+\sum_{i=1}^{d-2}(d-i)=\dim\mathbb S
 \quad\text{and}\quad
 \dim\mathbb P_k(T)=\dim\mathbb P_{k-1}(T)+\dim\mathbb P_k(F),
 \]
 we see that the number of DoFs in \eqref{eq:nnDoFs} is
 \(\dim\mathbb P_k(T;\mathbb S)\).
 
 Let \(\boldsymbol\tau\in\mathbb P_k(T;\mathbb S)\) and assume that all the DoFs in
 \eqref{eq:nnDoFs1}--\eqref{eq:nnDoFs3} vanish.
  Expanding \(\boldsymbol\tau\) with respect to the dual basis, we write
 \[
 \boldsymbol\tau=\sum_{m=1}^{\dim\mathbb S} c_m\,N_m',
 \qquad
 c_m:=\boldsymbol\tau:N_m\in\mathbb P_k(T).
 \]
 For each \(m\), let \(F_{j_m}\in\Delta_{d-1}(T)\) be the face associated with
 \(N_m\), namely \(j_m=i\) whenever
 \(N_m=\boldsymbol n_i\boldsymbol n_i^{\intercal}\) or
 \(N_m=\operatorname{sym}(\boldsymbol n_i\boldsymbol t_{0,j}^{\intercal})\).
 In the former case,
 \(c_m=\boldsymbol n_i^{\intercal}\boldsymbol\tau\,\boldsymbol n_i\), while in
 the latter,
 \(c_m=\boldsymbol n_i^{\intercal}\boldsymbol\tau\,\boldsymbol t_{0,j}\). Since
 the corresponding face moments \eqref{eq:nnDoFs1}--\eqref{eq:nnDoFs2} vanish and
 \(c_m|_{F_{j_m}}\in\mathbb P_k(F_{j_m})\), we obtain
 \(c_m|_{F_{j_m}}=0\). Hence
 \(c_m=\lambda_{j_m}q_m\) for some \(q_m\in\mathbb P_{k-1}(T)\), and therefore
 \[
 \boldsymbol\tau=\sum_{m=1}^{\dim\mathbb S}\lambda_{j_m}q_m\,N_m'.
 \]
 
 Fixing \(m\) and testing \eqref{eq:nnDoFs3} with
 \(\boldsymbol q=q_mN_m\in\mathbb P_{k-1}(T;\mathbb S)\), we obtain
 \[
 0=(\boldsymbol\tau,q_mN_m)_T
 =
 \sum_{\ell=1}^{\dim\mathbb S}
 (\lambda_{j_\ell}q_\ell N_\ell',q_mN_m)_T
 =
 (\lambda_{j_m}q_m,q_m)_T
 =
 \int_T \lambda_{j_m}q_m^2\dx,
 \]
 where we used \(N_\ell':N_m=\delta_{\ell m}\). Since \(\lambda_{j_m}>0\) in \(T\),
 we obtain \(q_m=0\), and hence \(\boldsymbol\tau=0\).
 \end{proof}
 
 Accordingly, we define the global finite element space
 \[
 \Sigma_{k,h}^{nn}
 :=
\{
 \boldsymbol\tau\in \Sigma^{nn}(\mathcal T_h;\mathbb S):
 \boldsymbol\tau|_T\in\mathbb P_k(T;\mathbb S)\;\; \forall\,T\in\mathcal T_h
\},
 \]
where
 \[
 \Sigma^{nn}(\mathcal T_h;\mathbb S)
 :=
\{
 \boldsymbol\tau\in H^1(\mathcal T_h;\mathbb S)
 :
 [\![\boldsymbol\tau_{nn}]\!]|_F=0
 \;\;
 \forall\, F\in\mathring{\mathcal F}_h
\}.
 \]
 Thus \(\Sigma_{k,h}^{nn}\) imposes only normal--normal continuity across
 interelement faces, with no additional edge or vertex continuity constraints.

 For \(\boldsymbol\tau\in\Sigma^{nn}(\mathcal T_h;\mathbb S)\), let
 \(I_h^{nn}\boldsymbol\tau\in\Sigma_{k,h}^{nn}\) be the canonical interpolant
 associated with the DoFs \eqref{eq:nnDoFs}. By a standard scaling
 argument, for any \(1\le s\le k+1\) and any \(T\in\mathcal T_h\), we have
 \begin{equation}\label{eq:intererrorSigma}
 \|\boldsymbol\tau-I_h^{nn}\boldsymbol\tau\|_{0,T}
 \lesssim
 h_T^{s}\,|\boldsymbol\tau|_{s,T},
 \qquad
 \forall\,\boldsymbol\tau\in
 \Sigma^{nn}(\mathcal T_h;\mathbb S)
 \cap H^{s}(\mathcal T_h;\mathbb S), T\in\mathcal T_h.
 \end{equation}
 
 \subsection{$H^1$-nonconforming virtual element space}
 
 For the scalar variable, we use the \(H^1\)-nonconforming virtual element space
 introduced in \cite{De2016nonconforming,ChenHuang2020nonconforming}. This space
 is compatible with the weak continuity structure of \(\Sigma_{k,h}^{nn}\) and
 the weak Hessian framework developed below. For each integer \(k\ge1\), the
 local shape function space is defined by
 \begin{equation}\label{eq:vem-space-def}
 V_{k}^{\mathrm{VE}}(T)
 :=
 \bigl\{
 v \in H^1(T):
 \Delta v \in \mathbb P_{k-2}(T),
 \partial_n v|_F \in \mathbb P_{k-1}(F)
 \; \forall\, F \in \Delta_{d-1}(T)
 \bigr\}.
 \end{equation}
 The space \(V_k^{\mathrm{VE}}(T)\) contains \(\mathbb P_k(T)\) and coincides
 with \(\mathbb P_1(T)\) when \(k=1\). The associated DoFs are
 \begin{subequations}\label{eq:vem-dofs}
 \begin{align}
 \frac{1}{|F|}(v,q)_F,
 &\qquad q \in \mathbb P_{k-1}(F),\;
 F \in \Delta_{d-1}(T),
 \label{eq:vem-dof-face} \\
 \frac{1}{|T|}(v,q)_T,
 &\qquad q \in \mathbb P_{k-2}(T).
 \label{eq:vem-dof-cell}
 \end{align}
 \end{subequations}
By a scaling argument, we obtain the \(L^2\)-norm equivalence
\begin{equation}\label{eq:vem-norm-equiv}
\|v\|_{0,T}^2\eqsim \|Q_{k-2,T}v\|_{0,T}^2 + \sum_{F\in\Delta_{d-1}(T)}h_F\|Q_{k-1,F}v\|_{0,F}^2 \quad\forall\,v\in V_k^{\mathrm{VE}}(T).      
\end{equation}

 The associated global \(H^1\)-nonconforming virtual element space is
 \begin{equation}\label{eq:vem-global-space}
 	\mathring V_{k,h}^{\mathrm{VE}}
 	:=
 	\{
 	v\in \mathring{H}_1^{\rm nc}(\mathcal T_h)
 	:\;
 	v|_T\in V_k^{\mathrm{VE}}(T)\;\;\forall\,T\in\mathcal T_h
 	\},
 \end{equation}
 where
 \[
 \mathring{H}_1^{\rm nc}(\mathcal T_h)
 :=
 \{
 v\in H^1(\mathcal T_h)
 :
 [\![Q_{k-1,F}v]\!]|_F=0
 \;\;
 \forall\,F\in\mathcal F_h
 \}.
 \]
 For \(k=1\), the space \(\mathring V_{1,h}^{\mathrm{VE}}\) coincides with the
 classical Crouzeix--Raviart finite element space \cite{CrouzeixRaviart1973}.
 This space satisfies the following weak continuity property:
 \begin{equation}\label{eq:vem-weak-continuity}
 ([\![v]\!], q)_F = 0
 \quad
 \forall\, v \in \mathring V_{k,h}^{\mathrm{VE}},\;
 q \in \mathbb P_{k-1}(F),\;
 F \in \mathcal F_h.
 \end{equation}
 For each \(T\in\mathcal T_h\), the following norm equivalence holds
 \cite[Lemma~3.3]{HuangTang2025}:
 \begin{equation}\label{eq:vem-grad-projection-equivalence}
 \|Q_{k-1,T}\nabla v\|_{0,T}
 \eqsim
 \|\nabla v\|_{0,T},
 \qquad
 \forall\, v\in V_k^{\mathrm{VE}}(T).
 \end{equation}
   
Let \(I_{k,h}^{\mathrm{VE}} : \mathring{H}_1^{\mathrm{nc}}(\mathcal T_h) \to
\mathring V_{k,h}^{\mathrm{VE}}\) denote the global interpolation operator
associated with the DoFs~\eqref{eq:vem-dofs}. When no ambiguity
arises, we abbreviate \(I_{k,h}^{\mathrm{VE}}\) as \(I_h^{\mathrm{VE}}\).
The following estimate holds (see \cite{ChenHuang2020nonconforming}): for any $1 \le s \le k+1$ and any $T \in \mathcal T_h$,
 \begin{equation}\label{eq:intererrorVE}
 \|v-I_h^{\mathrm{VE}}v\|_{0,T}
 +
 h_T|v-I_h^{\mathrm{VE}}v|_{1,T}
 \lesssim
 h_T^{s} |v|_{s,T},
 \qquad
 \forall\, v\in H_0^1(\Omega)\cap H^{s}(\mathcal T_h).
 \end{equation}
For $v\in\mathring{H}_1^{\mathrm{nc}}(\mathcal T_h)$, let \(v^{\mathrm{CR}}:=I_{1,h}^{\mathrm{VE}}v\in \mathring V_{1,h}^{\mathrm{VE}}\) be the
nonconforming linear interpolant of \(v\).

We also need the following estimate relating face
 jumps to the multiplier.
 \begin{lemma}\label{lem:CRjump-by-edge}
Let $F \in \mathcal{F}_h$. It holds for \(v \in \mathring{V}_{k,h}^{\mathrm{VE}}\) and
 \(\mu \in\mathbb{P}_k(\mathring{\mathcal{E}}_h)\) that
 \begin{equation}\label{eq:CRjump-by-edge-local}
 h_F^{-1} \,\|[\![v^{\mathrm{CR}}]\!]\|_{0,F}^2
 \lesssim \sum_{e \in \Delta_{d-2}(F)} \|v^{\mathrm{CR}}-\mu\|_{0,e}^2.
 \end{equation}
 \end{lemma}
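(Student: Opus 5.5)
The plan is to use the fact that \([\![v^{\mathrm{CR}}]\!]|_F\) is an affine function on \(F\) with zero mean, and to dominate it by its values on the boundary subsimplices \(e\in\Delta_{d-2}(F)\). For \(F\in\mathring{\mathcal F}_h\) with \(F=\partial T^+\cap\partial T^-\), the traces \(v^{\mathrm{CR}}|_{T^\pm}\) restricted to \(F\) are in \(\mathbb P_1(F)\). Since \(v^{\mathrm{CR}}\in\mathring V_{1,h}^{\mathrm{VE}}\), the weak continuity \eqref{eq:vem-weak-continuity} with \(k=1\) gives that the jump \(w:=[\![v^{\mathrm{CR}}]\!]|_F\in\mathbb P_1(F)\) has zero mean on \(F\). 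For a boundary face \(F\subset\partial\Omega\), \(w=v^{\mathrm{CR}}|_F\) also has zero mean. In that case every \(e\in\Delta_{d-2}(F)\) lies on \(\partial\Omega\), so \(\mu|_e=0\) by the zero-extension convention.

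The key step is a finite-dimensional estimate on \(F\). For an affine \(w\) on \(F\) with \((w,1)_F=0\), I claim
\[
h_F^{-1}\|w\|_{0,F}^2\lesssim \sum_{e\in\Delta_{d-2}(F)}\|w\|_{0,e}^2 .
\]
By scaling to a reference face, it suffices to show that the right-hand side vanishes only if \(w=0\). If \(w|_e=0\) for all \(d\) facets \(e\) of the \((d-1)\)-simplex \(F\), then \(w\) vanishes at all vertices of \(F\). For \(d\ge 3\) each vertex lies on some facet; for \(d=2\) the "facets" are the two endpoints. Hence \(w\equiv0\). Both sides scale like \(h_F^{d-2}\), so shape regularity gives the bound with a uniform constant. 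The zero-mean condition is not actually needed here, since vertex values already determine \(w\).

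Next, bound \(\|w\|_{0,e}\) by the quantities involving \(\mu\). On an interior face, for each \(e\in\Delta_{d-2}(F)\) we write \(w|_e=(v^{\mathrm{CR}}|_{T^+}-\mu)|_e-(v^{\mathrm{CR}}|_{T^-}-\mu)|_e\), up to sign. The triangle inequality then gives \(\|w\|_{0,e}\le\|v^{\mathrm{CR}}|_{T^+}-\mu\|_{0,e}+\|v^{\mathrm{CR}}|_{T^-}-\mu\|_{0,e}\). Here \(\|v^{\mathrm{CR}}-\mu\|_{0,e}\) on the right of \eqref{eq:CRjump-by-edge-local} is understood as including the traces from both elements adjacent to \(F\). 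If \(e\) is a boundary subsimplex of an interior face, then \(\mu=0\) there and the same splitting applies. On a boundary face, \(w=v^{\mathrm{CR}}-\mu\) on each \(e\) directly. Summing over \(e\) and combining with the previous paragraph yields \eqref{eq:CRjump-by-edge-local}.

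The main obstacle is interpretive rather than technical. One must read \(\|v^{\mathrm{CR}}-\mu\|_{0,e}\) as the sum over the (at most two) elements containing \(F\), because \(v^{\mathrm{CR}}\) is discontinuous at \(e\). One must also check that the degree of \(\mu\in\mathbb P_k\) plays no role, since only the triangle inequality is used. I expect the scaling argument itself to be routine.
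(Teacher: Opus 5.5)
Your proof is correct and essentially the paper's: an affine scaling/norm-equivalence estimate for the jump on the face \(F\), followed by the triangle inequality using that \(\mu\) is single-valued on interior \((d-2)\)-subsimplices and vanishes on \(\partial\Omega\). The only difference is cosmetic: the paper gets the face estimate from the Crouzeix--Raviart facet-mean degrees of freedom on \(F\), bounding by \(\|Q_{0,e}[\![v^{\mathrm{CR}}]\!]\|_{0,e}\le\|[\![v^{\mathrm{CR}}]\!]\|_{0,e}\), whereas you argue directly through vertex values.
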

 \begin{proof}
By $[\![v^{\mathrm{CR}}]\!]|_F\in\mathbb P_1(F)$, the DoFs~\eqref{eq:vem-dofs}
for the nonconforming linear element and the norm equivalence
\eqref{eq:vem-norm-equiv}, we have for each face \(F\),
\[
h_F^{-1}\|[\![v^{\mathrm{CR}}]\!]\|_{0,F}^2 \eqsim \sum_{e \in \Delta_{d-2}(F)}
 \|Q_{0,e}[\![v^{\mathrm{CR}}]\!]\|_{0,e}^2\leq \sum_{e \in \Delta_{d-2}(F)}
 \|[\![v^{\mathrm{CR}}]\!]\|_{0,e}^2.
\]
The estimate \eqref{eq:CRjump-by-edge-local} then follows from the fact that $\mu$ is single-valued on each interior $(d-2)$-dimensional entity and vanishes on the boundary.
 \end{proof}

By the definition of
 \(I_h^{\mathrm{VE}}\) together with \(Q_{k-1,h}\), the following commuting
 property holds \cite[Lemma~6]{HuangTang2025}:
 \begin{equation}\label{eq:vem-hdiv-commuting}
 Q_{k-1,h}\,\nabla_h(I_h^{\mathrm{VE}}v)
 =
 Q_{k-1,h}\,\nabla_h v,
 \qquad
 \forall\, v\in H_0^1(\Omega)+\mathring V_{k,h}^{\mathrm{VE}}.
 \end{equation}
 
 We also employ the standard \(H(\div)\)-conforming interpolation operator
 \(I_h^{\div}\) into the BDM space of degree \(k-1\); see, for example,
 \cite{Brezzi1987,Brezzi1985,Nedelec1986,ChenHuang2022,Chen2024}.
 For \(\boldsymbol w\in H^1(\Omega;\mathbb R^d)\), \(v_h\in\mathring V_{k,h}^{\mathrm{VE}}\),
 and \(k\ge2\), we have (cf. \cite[Lemma~7]{HuangTang2025})
 \begin{multline}\label{eq:hdiv-projection-difference}
 \bigl(
 Q_{k-1,h}\boldsymbol w-I_h^{\div}\boldsymbol w,\,
 \nabla_h v_h
 \bigr) \\
 =
 \sum_{T\in\mathcal T_h}\sum_{F\in\Delta_{d-1}(T)}
 \Bigl(
 \bigl(Q_{k-1,T}\boldsymbol w-\boldsymbol w\bigr)\cdot\boldsymbol n,\,
 Q_{k-1,F}v_h - Q_{k-2,T}v_h
 \Bigr)_F .
 \end{multline}
 In addition, \(I_h^{\div}\) satisfies the commuting relation \cite[(2.5.27)]{BoffiBrezziFortin2013}
 \begin{equation}\label{eq:commutationIhdiv}
 Q_{k-2,h}\div\boldsymbol w = \div I_h^{\div}\boldsymbol w.
 \end{equation}
 
\subsection{Weak Hessian and norm equivalence}
For \((v,\mu)\in \mathring H_1^{\mathrm{nc}}(\mathcal T_h)
\times L^2(\mathring{\mathcal E}_h)\), we define the weak Hessian
\(\nabla_w^2(v,\mu)\in \Sigma_{k,h}^{nn}\) as follows: for all \(\boldsymbol\tau\in\Sigma_{k,h}^{nn}\),
\[
 (\nabla_w^2(v,\mu),\boldsymbol{\tau})
 =
\sum_{T \in \mathcal{T}_h}\Big((v,\div\div\boldsymbol{\tau})_T
 -(v,\tr_2(\boldsymbol{\tau}))_{\partial T}
 +\sum_{e\in\Delta_{d-2}(T)} (\mu,\tr_e(\boldsymbol{\tau}))_e\Big).
 \]

\begin{lemma}\label{lem:weakHessian-properties}
The weak Hessian satisfies
\begin{equation}\label{eq:weakHessian-commute}
\nabla_w^2(I_h^{\mathrm{VE}}v,Q_{k,\mathcal E_h}\mu)
=
\nabla_w^2(v,\mu),
\qquad
\forall\,(v,\mu)\in H_0^1(\Omega)\times L^2(\mathring{\mathcal E}_h).
\end{equation}
\end{lemma}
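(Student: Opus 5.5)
Since both sides lie in \(\Sigma_{k,h}^{nn}\) and are defined through their action on \(\Sigma_{k,h}^{nn}\), it suffices to show that for every \(\boldsymbol\tau\in\Sigma_{k,h}^{nn}\) the three groups of terms in the definition of \(\nabla_w^2\) are unchanged when \((v,\mu)\) is replaced by \((I_h^{\mathrm{VE}}v,Q_{k,\mathcal E_h}\mu)\). The argument is a degree count: each trace of \(\boldsymbol\tau\) that enters is a polynomial of degree low enough to be captured by the DoFs \eqref{eq:vem-dofs} or by the projection \(Q_{k,\mathcal E_h}\).

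\emph{Cell term.} On each \(T\), \(\boldsymbol\tau|_T\in\mathbb P_k(T;\mathbb S)\), so \(\div\div\boldsymbol\tau\in\mathbb P_{k-2}(T)\). The cell DoFs \eqref{eq:vem-dof-cell} give \((I_h^{\mathrm{VE}}v-v,q)_T=0\) for all \(q\in\mathbb P_{k-2}(T)\). Hence \((I_h^{\mathrm{VE}}v,\div\div\boldsymbol\tau)_T=(v,\div\div\boldsymbol\tau)_T\).

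\emph{Face term.} On a face \(F\subset\partial T\), \(\tr_2(\boldsymbol\tau)=\boldsymbol n^{\intercal}\div\boldsymbol\tau+\div_F(\boldsymbol\tau\boldsymbol n)\). The first summand has degree at most \(k-1\). For the second, the surface divergence involves only tangential derivatives of \(\boldsymbol\tau\boldsymbol n\) restricted to \(F\), which again gives degree at most \(k-1\). So \(\tr_2(\boldsymbol\tau)|_F\in\mathbb P_{k-1}(F)\). The face DoFs \eqref{eq:vem-dof-face} then give \((I_h^{\mathrm{VE}}v-v,\tr_2(\boldsymbol\tau))_F=0\). Here \(I_h^{\mathrm{VE}}v\) is computed elementwise from the trace of \(v|_T\); since \(v\in H_0^1(\Omega)\) is single-valued, the face moments agree from both sides.

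\emph{Codimension-two term.} For \(e\in\Delta_{d-2}(T)\), \(\tr_e(\boldsymbol\tau)\) is a sum of terms \(\boldsymbol n_{F,e}^{\intercal}\boldsymbol\tau\boldsymbol n\) with constant vectors, so \(\tr_e(\boldsymbol\tau)|_e\in\mathbb P_k(e)\). By definition of the \(L^2(e)\)-projection, \((Q_{k,e}\mu-\mu,\tr_e(\boldsymbol\tau))_e=0\). On boundary \(e\) both \(\mu\) and \(Q_{k,\mathcal E_h}\mu\) are extended by zero, so those terms vanish on both sides.

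Summing the three identities over \(T\in\mathcal T_h\) gives \((\nabla_w^2(I_h^{\mathrm{VE}}v,Q_{k,\mathcal E_h}\mu),\boldsymbol\tau)=(\nabla_w^2(v,\mu),\boldsymbol\tau)\) for all \(\boldsymbol\tau\in\Sigma_{k,h}^{nn}\). Taking \(\boldsymbol\tau\) equal to the difference of the two weak Hessians shows they coincide.

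The only step that needs care is the degree bound \(\tr_2(\boldsymbol\tau)|_F\in\mathbb P_{k-1}(F)\). One has to check that \(\div_F(\boldsymbol\tau\boldsymbol n)\) involves only first-order tangential derivatives and lowers the degree by one. One also has to confirm that the face DoFs of \(I_h^{\mathrm{VE}}\) are moments against \(\mathbb P_{k-1}(F)\) in each local element. Everything else follows directly from the definitions of the DoFs and of \(L^2\)-projection.
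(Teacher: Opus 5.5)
Your proposal is correct and follows the same route as the paper, which simply states that the identity follows from the definitions of $I_h^{\mathrm{VE}}$ and $Q_{k,\mathcal E_h}$. Your degree counts $\div\div\boldsymbol\tau|_T\in\mathbb P_{k-2}(T)$, $\tr_2(\boldsymbol\tau)|_F\in\mathbb P_{k-1}(F)$ and $\tr_e(\boldsymbol\tau)|_e\in\mathbb P_k(e)$, matched against the cell and face DoFs \eqref{eq:vem-dofs} and the projection $Q_{k,e}$, together with testing by the difference of the two weak Hessians, are precisely the details behind that sentence.
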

\begin{proof}
The identity follows directly from the definitions of
\(I_h^{\mathrm{VE}}\) and \(Q_{k,\mathcal E_h}\).
\end{proof}

For later use, we introduce two discrete \(H^2\)-seminorms on
\(\mathring H_1^{\mathrm{nc}}(\mathcal T_h)\times
L^2(\mathring{\mathcal E}_h)\):
\[
\begin{aligned}
|\!|\!|(v,\mu)|\!|\!|_{2,\mathrm{CR}}^2
:=&
\sum_{T\in\mathcal T_h}
h_T^{-4}\|Q_{k-2,T}(v-v^{\mathrm{CR}})\|_{0,T}^2 \\
&
+\sum_{T\in\mathcal T_h}\sum_{F\in\Delta_{d-1}(T)}
h_T^{-3}\|Q_{k-1,F}(v-v^{\mathrm{CR}})\|_{0,F}^2  \\
&
+\sum_{T\in\mathcal T_h}\sum_{e\in\Delta_{d-2}(T)}
h_T^{-2}\|Q_{k,e}(v^{\mathrm{CR}}-\mu)\|_{0,e}^2
+\sum_{F\in\mathcal F_h}
h_F^{-1}\|[\![\partial_n v^{\mathrm{CR}}]\!]\|_{0,F}^2,
\end{aligned}
\]
and
\[
\begin{aligned}
|\!|\!|(v,\mu)|\!|\!|_{2,h}^2
:={}&
|Q_{k-1,h}\nabla_h v|_{1,h}^2
+
\sum_{F\in\mathcal F_h}
h_F^{-1}
\|[\![Q_{k-1,h}\nabla_h v]\!]\|_{0,F}^2  \\
&+
\sum_{T\in\mathcal T_h}
\sum_{e\in\Delta_{d-2}(T)}
h_T^{-2}
\|Q_{k,e}(v^{\mathrm{CR}}-\mu)\|_{0,e}^2 .
\end{aligned}
\]
By the broken Poincar\'e inequality in \cite[(1.8)]{Brenner2003} and the norm equivalence \eqref{eq:vem-grad-projection-equivalence}, the seminorm \(|\!|\!|\cdot|\!|\!|_{2,h}\) is indeed a norm on
\(\mathring V_{k,h}^{\mathrm{VE}}\times \mathbb P_k(\mathring{\mathcal E}_h)\).

\begin{lemma}\label{lem:projected-gradient-H2-equivalence}
For \(k\ge1\), the following norm equivalences hold:
\begin{equation}\label{eq:normequivalence}
\|\nabla_w^{2}(v,\mu)\|_{0}
\eqsim
|\!|\!|(v,\mu)|\!|\!|_{2,h}
\eqsim
|\!|\!|(v,\mu)|\!|\!|_{2,\mathrm{CR}},
\qquad
\forall\,(v,\mu)\in
\mathring V_{k,h}^{\mathrm{VE}}\times
\mathbb P_k(\mathring{\mathcal E}_h).
\end{equation}
\end{lemma}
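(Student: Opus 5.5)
The plan is to prove the two equivalences separately. Both rest on one elementwise representation of \(\nabla_w^2(v,\mu)\) obtained from the Green identity \eqref{eq:greenidentitydivdiv}, together with the unisolvence of the DoFs \eqref{eq:nnDoFs} from Lemma~\ref{lem:nn-unisolvence}.

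\textbf{Step 1: reduction to \(v^{\mathrm{CR}}\) plus a remainder.} Split \(v=v^{\mathrm{CR}}+(v-v^{\mathrm{CR}})\).

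For the piecewise linear part \(v^{\mathrm{CR}}\), apply \eqref{eq:greenidentitydivdiv} on each \(T\). Since \(\nabla^2 v^{\mathrm{CR}}|_T=0\), this gives
\[
(\nabla_w^2(v^{\mathrm{CR}},\mu),\boldsymbol\tau)=-\sum_{T}(\tr_1\boldsymbol\tau,\partial_n v^{\mathrm{CR}})_{\partial T}+\sum_T\sum_{e\in\Delta_{d-2}(T)}(\mu-v^{\mathrm{CR}},\tr_e\boldsymbol\tau)_e .
\]
Normal--normal continuity of \(\boldsymbol\tau\) turns the first sum into \(-\sum_F(\boldsymbol\tau_{nn},[\![\partial_n v^{\mathrm{CR}}]\!])_F\).

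For the remainder \(w:=v-v^{\mathrm{CR}}\), only the moments that appear in the definition of \(\nabla_w^2\) matter. Since \(\div\div\boldsymbol\tau|_T\in\mathbb P_{k-2}(T)\) and \(\tr_2\boldsymbol\tau|_F\in\mathbb P_{k-1}(F)\), only \(Q_{k-2,T}w\) and \(Q_{k-1,F}w\) enter. Moreover \(Q_{0,F}w=0\).

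\textbf{Step 2: \(\|\nabla_w^2\|_0\lesssim|\!|\!|\cdot|\!|\!|_{2,\mathrm{CR}}\) and its converse.} For the upper bound, test with \(\boldsymbol\tau=\nabla_w^2(v,\mu)\). Then apply Cauchy--Schwarz together with the inverse/trace estimates \(\|\div\div\boldsymbol\tau\|_{0,T}\lesssim h_T^{-2}\|\boldsymbol\tau\|_{0,T}\), \(\|\tr_2\boldsymbol\tau\|_{0,F}\lesssim h_T^{-3/2}\|\boldsymbol\tau\|_{0,T}\), \(\|\tr_e\boldsymbol\tau\|_{0,e}\lesssim h_T^{-1}\|\boldsymbol\tau\|_{0,T}\) and \(\|\boldsymbol\tau_{nn}\|_{0,F}\lesssim h_F^{-1/2}\|\boldsymbol\tau\|_{0,T}\). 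Each term of \(|\!|\!|\cdot|\!|\!|_{2,\mathrm{CR}}\) then appears with exactly the right scaling.

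For the lower bound, construct a test tensor \(\boldsymbol\tau\in\Sigma_{k,h}^{nn}\) by prescribing its DoFs. This is the main obstacle: the DoFs \eqref{eq:nnDoFs} are face and cell moments, whereas the quantities to be controlled live on \(e\), \(F\) and \(T\). I would use a hierarchical choice carried out in order.
\begin{itemize}
\item[(a)] Choose the face moments of \(\boldsymbol\tau_{nn}\) to equal \(h_F\,[\![\partial_n v^{\mathrm{CR}}]\!]\) with \(Q_{k,e}(v^{\mathrm{CR}}-\mu)\)-dependent corrections. To produce \(\tr_e\boldsymbol\tau\) equal to \(h_T^{-1}Q_{k,e}(\mu-v^{\mathrm{CR}})\), use the tangential--normal DoFs \eqref{eq:nnDoFs2} together with the \(\boldsymbol n_i\boldsymbol n_i^{\intercal}\) components. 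Here \(\tr_e\) mixes \(\boldsymbol n_{F,e}^{\intercal}\boldsymbol\tau\boldsymbol n\) from the two faces containing \(e\), and the decomposition of Lemma~\ref{lem:basis-S} makes these components independently assignable on \(e\).
\item[(b)] Choose the face moments to produce \(\tr_2\) pairing with \(h_T^{-3}Q_{k-1,F}w\).
\item[(c)] Choose the interior moments \eqref{eq:nnDoFs3} to give \(\div\div\) pairing with \(h_T^{-4}Q_{k-2,T}w\).
\end{itemize}
Cross terms are absorbed by a weighted combination \(\boldsymbol\tau=\boldsymbol\tau_1+\delta\boldsymbol\tau_2+\delta^2\boldsymbol\tau_3\) and Young's inequality, with scaling giving \(\|\boldsymbol\tau\|_0\lesssim|\!|\!|(v,\mu)|\!|\!|_{2,\mathrm{CR}}\). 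If the edge-level construction proves awkward, I would first argue locally on a reference element that, modulo the jump terms, \(\nabla_w^2(v,\mu)|_T=0\) forces all local quantities to vanish, and then use the finite-dimensional equivalence with scaling.

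\textbf{Step 3: \(|\!|\!|\cdot|\!|\!|_{2,h}\eqsim|\!|\!|\cdot|\!|\!|_{2,\mathrm{CR}}\).} The edge term is common to both, so only the remaining terms need comparison. Write \(Q_{k-1,h}\nabla_h v=\nabla v^{\mathrm{CR}}+Q_{k-1,h}\nabla_h w\).

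By the DoFs \eqref{eq:vem-dofs} and integration by parts, \(Q_{k-1,T}\nabla w\) is determined by \(Q_{k-2,T}w\) and \(Q_{k-1,F}w\). Combining this with \eqref{eq:vem-grad-projection-equivalence} and scaling gives
\[
|Q_{k-1}\nabla w|_{1,T}^2+h_T^{-2}\|Q_{k-1}\nabla w\|_{0,T}^2\eqsim h_T^{-4}\|Q_{k-2,T}w\|_{0,T}^2+h_T^{-3}\sum_F\|Q_{k-1,F}w\|_{0,F}^2 .
\]
The lower bound on the left of this display uses \(Q_{0,F}w=0\) and a Poincaré-type argument on \(V_k^{\mathrm{VE}}(T)\).

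Next, \([\![Q_{k-1,h}\nabla_h v]\!]\) differs from \([\![\nabla v^{\mathrm{CR}}]\!]\) by traces of \(Q_{k-1}\nabla w\), which the above bounds. The tangential part of \([\![\nabla v^{\mathrm{CR}}]\!]\) is controlled through Lemma~\ref{lem:CRjump-by-edge} by the edge term, and its normal part is exactly \([\![\partial_n v^{\mathrm{CR}}]\!]\).

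For the reverse direction, bound \(h_T^{-1}\|Q_{k-1}\nabla w\|_{0,T}\) by \(|Q_{k-1}\nabla_hv|_{1,h}\) plus the jumps. This follows from a local Poincaré argument, using that \(\nabla v^{\mathrm{CR}}\) is the elementwise mean of \(Q_{k-1}\nabla_h v\), i.e. \(Q_{0,T}\nabla w=0\).
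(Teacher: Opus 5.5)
Step~3 and the upper bound \(\|\nabla_w^2(v,\mu)\|_0\lesssim|\!|\!|(v,\mu)|\!|\!|_{2,\mathrm{CR}}\) in Step~2 are fine. Step~3 is essentially the paper's argument: split off \(v-v^{\mathrm{CR}}\), use \(Q_{0,h}\nabla_h(v-v^{\mathrm{CR}})=0\), \eqref{eq:vem-grad-projection-equivalence} and \eqref{eq:vem-norm-equiv}, and handle the tangential jump with Lemma~\ref{lem:CRjump-by-edge}.

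The gap is the lower bound \(|\!|\!|(v,\mu)|\!|\!|_{2,\mathrm{CR}}\lesssim\|\nabla_w^2(v,\mu)\|_0\). The paper does not prove this either; it imports it from \cite[Lemma~B.1]{ChenHuang2025}. Your construction (a)--(c) does not work with the element \eqref{eq:nnDoFs}.
\begin{itemize}
\item Its boundary DoFs are only moments of \(\boldsymbol\tau_{nn}\) and, on \(F_1,\dots,F_{d-2}\) only, of \(\boldsymbol n_i^{\intercal}\boldsymbol\tau\boldsymbol t_{0,j}\) with \(j>i\). For \(d=2\) there is nothing beyond \(\boldsymbol\tau_{nn}\).
\item Hence none of \(\tr_e\boldsymbol\tau\), \(\tr_2\boldsymbol\tau\), \(\div\div\boldsymbol\tau\) is a DoF, and all three depend on the interior moments \eqref{eq:nnDoFs3}. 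For \(e=F_0\cap F_d\), for instance, \(\tr_e\boldsymbol\tau\) involves \(\boldsymbol n_{F,e}^{\intercal}\boldsymbol\tau\boldsymbol n\) on two faces where no tangential--normal component is prescribed.
\item In the expansion \(\boldsymbol\tau=\sum_m c_mN_m'\), the restriction \(\boldsymbol\tau|_e\) involves coefficients \(c_m\) attached to faces not containing \(e\). So Lemma~\ref{lem:basis-S} does not make anything ``independently assignable on \(e\)''.
\item Changing the interior moments in (c) perturbs \(\tr_2\) and \(\tr_e\). There is therefore no triangular structure for the weighting \(\boldsymbol\tau_1+\delta\boldsymbol\tau_2+\delta^2\boldsymbol\tau_3\) to exploit.
\end{itemize}

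Your fallback has the right skeleton, but its key claim is exactly the missing ingredient. Since \(\nabla_w^2\) is an \(L^2\)-projection onto the globally coupled space \(\Sigma_{k,h}^{nn}\), you must localize with tensors \(\boldsymbol\tau\in\mathbb P_k(T;\mathbb S)\) with \(\boldsymbol\tau_{nn}|_{\partial T}=0\), extended by zero. For these, \((\nabla_w^2(v,\mu),\boldsymbol\tau)=\ell_T(\boldsymbol\tau):=(Q_{k-2,T}w,\div\div\boldsymbol\tau)_T-(Q_{k-1,F}w,\tr_2\boldsymbol\tau)_{\partial T}+\sum_{e}(\mu-v^{\mathrm{CR}},\tr_e\boldsymbol\tau)_e\). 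The jump term is then treated with tensors supported on two-element patches.

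What must be shown is that \(\ell_T\equiv0\) on these bubbles forces \(Q_{k-2,T}w=0\), \(Q_{k-1,F}w=0\) and \(Q_{k,e}(v^{\mathrm{CR}}-\mu)=0\). This does not follow from Lemma~\ref{lem:nn-unisolvence}, and counting alone does not give it: for \(d=3\), \(k=1\), both sides are 12-dimensional. It does follow from surjectivity of \(\boldsymbol\tau\mapsto(\div\div\boldsymbol\tau,\tr_1\boldsymbol\tau,\tr_2\boldsymbol\tau,\tr_e\boldsymbol\tau)\) from \(\mathbb P_k(T;\mathbb S)\) onto all data compatible with \(\mathbb P_1(T)\) via \eqref{eq:greenidentitydivdiv}. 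Given that surjectivity, the argument closes as follows:
\begin{itemize}
\item Choose \(g_F\in\mathbb P_k(F)\) with \(\ell_T(\boldsymbol\tau)=-(\boldsymbol\tau_{nn},g_F)_{\partial T}\) for all \(\boldsymbol\tau\). This is possible because \(\ell_T\) vanishes on the kernel of \(\boldsymbol\tau\mapsto\boldsymbol\tau_{nn}|_{\partial T}\).
\item The data \((Q_{k-2,T}w,g_F,Q_{k-1,F}w,\mu-v^{\mathrm{CR}})\) then annihilate the range. They therefore equal \((Q_{k-2,T}p,\partial_np,Q_{k-1,F}p,p|_e)\) for some \(p\in\mathbb P_1(T)\).
\item Finally, \(Q_{0,F}p=Q_{0,F}w=0\) on every face forces \(p=0\).
\end{itemize}
Such a trace lifting is what the div-div-conforming element of \cite{ChenHuang2025}, whose DoFs are built from these traces, provides. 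This is the kind of input behind the lemma the paper invokes.

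For \(d=2\) you could instead use Lemma~\ref{lem:hhj-isomorphism}: there \(\ell_T(\boldsymbol\tau)=(\boldsymbol\tau,\nabla^2w_h)_T\) with \(\nabla^2w_h\in\mathbb P_{k-1}(T;\mathbb S)\), which is controlled by \eqref{eq:nnDoFs3}. This has no analogue for \(d\ge3\).
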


\begin{proof}
Following the argument in \cite[Lemma B.1]{ChenHuang2025},
\[
\|\nabla_w^{2}(v,\mu)\|_{0}
\eqsim
|\!|\!|(v,\mu)|\!|\!|_{2,\mathrm{CR}},
\qquad
\forall\,(v,\mu)\in
\mathring V_{k,h}^{\mathrm{VE}}\times
\mathbb P_k(\mathring{\mathcal E}_h).
\]
It remains to prove the equivalence between
\(|\!|\!|\cdot|\!|\!|_{2,h}\) and
\(|\!|\!|\cdot|\!|\!|_{2,\mathrm{CR}}\).

Let \(\delta_h:=v-v^{\mathrm{CR}}\). Then
\(Q_{0,h}\nabla_h\delta_h=0\) by
\eqref{eq:vem-hdiv-commuting}. Hence, by the inverse inequality, the
Poincar\'e inequality, and the norm equivalence
\eqref{eq:vem-grad-projection-equivalence},
\[
|Q_{k-1,h}\nabla_h\delta_h|_{1,h}^2
\eqsim
\sum_{T\in\mathcal T_h}
h_T^{-2}\|Q_{k-1,T}\nabla\delta_h\|_{0,T}^2
\eqsim
\sum_{T\in\mathcal T_h}
h_T^{-2}\|\nabla\delta_h\|_{0,T}^2 .
\]
Together with the inverse inequality and the estimate for the
nonconforming linear interpolation \(v^{\mathrm{CR}}\), this gives
\[
|Q_{k-1,h}\nabla_h\delta_h|_{1,h}^2
\eqsim
\sum_{T\in\mathcal T_h}
h_T^{-4}\|\delta_h\|_{0,T}^2 .
\]
Using the \(L^2\)-norm equivalence \eqref{eq:vem-norm-equiv}, we obtain
\begin{equation}\label{eq:delta-part-equivalence}
	|Q_{k-1,h}\nabla_h\delta_h|_{1,h}^2 \eqsim \sum_{T\in\mathcal T_h}
\Bigl(
h_T^{-4}\|Q_{k-2,T}\delta_h\|_{0,T}^2
+\sum_{F\in\Delta_{d-1}(T)}
h_T^{-3}\|Q_{k-1,F}\delta_h\|_{0,F}^2
\Bigr).
\end{equation}

Moreover, the trace inequality, \(Q_{0,h}\nabla_h\delta_h=0\), and the
Poincar\'e inequality imply
\[
\sum_{F\in\mathcal F_h}
h_F^{-1}
\|[\![Q_{k-1,h}\nabla_h\delta_h]\!]\|_{0,F}^2
\lesssim
|Q_{k-1,h}\nabla_h\delta_h|_{1,h}^2=|Q_{k-1,h}\nabla_hv|_{1,h}^2.
\]
Hence,
	\begin{equation}\label{eq:stable-decomposition-projected-H2}
		\begin{aligned}
			&|Q_{k-1,h}\nabla_h v|_{1,h}^2
			+
			\sum_{F\in\mathcal F_h}
			h_F^{-1}
			\|[\![Q_{k-1,h}\nabla_h v]\!]\|_{0,F}^2 \\
			&\qquad\eqsim
			|Q_{k-1,h}\nabla_h\delta_h|_{1,h}^2
			+
			\sum_{F\in\mathcal F_h}
			h_F^{-1}
			\|[\![\nabla_h v^{\mathrm{CR}}]\!]\|_{0,F}^2 .
		\end{aligned}
	\end{equation}

	For the Crouzeix--Raviart part, we split the gradient jump into its normal
	and tangential components on each face. 
Using the inverse inequality on face $F$ and \eqref{eq:CRjump-by-edge-local},
we have
\begin{equation*}
\sum_{F\in\mathcal F_h}
h_F^{-1}\|\nabla_F[\![v^{\mathrm{CR}}]\!]\|_{0,F}^2 \lesssim
\sum_{T\in\mathcal T_h}
\sum_{e\in\Delta_{d-2}(T)}
h_T^{-2}
\|Q_{k,e}(v^{\mathrm{CR}}-\mu)\|_{0,e}^2.
\end{equation*}
By $\|[\![\nabla_h v^{\mathrm{CR}}]\!]\|_{0,F}^2= \|[\![\partial_n v^{\mathrm{CR}}]\!]\|_{0,F}^2 +\|\nabla_F[\![v^{\mathrm{CR}}]\!]\|_{0,F}^2$,
	it follows that
	\begin{equation}\label{eq:CR-part-equivalence}
		\begin{aligned}
			&\sum_{F\in\mathcal F_h}
			h_F^{-1}\|[\![\nabla_h v^{\mathrm{CR}}]\!]\|_{0,F}^2
			+
			\sum_{T\in\mathcal T_h}
			\sum_{e\in\Delta_{d-2}(T)}
			h_T^{-2}
			\|Q_{k,e}(v^{\mathrm{CR}}-\mu)\|_{0,e}^2 \\
			&\qquad\eqsim
			\sum_{F\in\mathcal F_h}
			h_F^{-1}\|[\![\partial_n v^{\mathrm{CR}}]\!]\|_{0,F}^2
			+
			\sum_{T\in\mathcal T_h}
			\sum_{e\in\Delta_{d-2}(T)}
			h_T^{-2}
			\|Q_{k,e}(v^{\mathrm{CR}}-\mu)\|_{0,e}^2 .
		\end{aligned}
	\end{equation}
Combining \eqref{eq:stable-decomposition-projected-H2} and
\eqref{eq:CR-part-equivalence}, we find
\begin{align*}
|\!|\!|(v,\mu)|\!|\!|_{2,h}^2 &\eqsim |Q_{k-1,h}\nabla_h\delta_h|_{1,h}^2 + 
\sum_{F\in\mathcal F_h}
h_F^{-1}\|[\![\partial_n v^{\mathrm{CR}}]\!]\|_{0,F}^2 \\
&\quad
+
\sum_{T\in\mathcal T_h}
\sum_{e\in\Delta_{d-2}(T)}
h_T^{-2}\|Q_{k,e}(v^{\mathrm{CR}}-\mu)\|_{0,e}^2 .
\end{align*}
The desired equivalence between
\(|\!|\!|\cdot|\!|\!|_{2,h}\) and
\(|\!|\!|\cdot|\!|\!|_{2,\mathrm{CR}}\) now follows from
\eqref{eq:delta-part-equivalence}.
\end{proof}

\section{Symmetric-Tensor Distributional Mixed Methods}\label{sec:distribmfem}

This section presents a distributional mixed method for
\eqref{eq:fourthorderequation}. The method couples normal--normal continuous symmetric tensor finite elements with an \(H^1\)-nonconforming virtual element space and a codimension-two multiplier. We identify its two-dimensional specialization with an HHJ-type formulation and prove optimal-order and parameter-robust a priori error estimates.

\subsection{Distributional mixed method}
For \(k\ge1\), recall from \cite[Section~4.1]{ChenHuang2025} the weak
div\,div operator \((\div\div)_w\) from \(\Sigma_{k,h}^{-1}\) to
\(\mathring{M}^{-1}_{k-2,k-1,k,k}\):
\[
(\div\div)_w\bs \sigma := ((\div\div)_T\bs \sigma, -h_F^{-1}[\tr_2(\bs \sigma)]|_F, h_F^{-3}[\bs n^{\intercal}\bs \sigma\bs n]|_F, h_e^{-2}[ \tr_e(\bs \sigma)]|_e),
\]
where $\tr_2(\bs \sigma)$ and $\tr_e(\bs \sigma)$ are defined in \eqref{eq:divdiv-traces}, and the broken spaces are
\begin{align*}
\Sigma_{k,h}^{-1}&:=\mathbb P_k(\mathcal T_h;\mathbb S),
\quad
\mathring{M}_{k-2,k-1,k,k}^{-1}
:=
\mathbb P_{k-2}(\mathcal T_h)\times
\mathbb P_{k-1}(\mathring{\mathcal F}_h)\times \mathbb P_k(\mathring{\mathcal F}_h)
\times \mathbb P_k(\mathring{\mathcal E}_h).
\end{align*}
For the space \(\mathring{M}_{k-2,k-1,k,k}^{-1}\), define the weighted inner product
\begin{align*}    
((u_0, u_b, u_n, u_e), (v_0, v_b, v_n, v_e))_{0,h} &:=  \sum_{T\in \mathcal T_h}(u_0, v_0)_{T} + \sum_{F\in\mathcal F_h}h_F(u_b, v_b)_{F} \\
&\quad+ \sum_{F\in\mathcal F_h}h_F^3(u_n, v_n)_{F}+ \sum_{e\in\mathcal E_h}h_e^2(u_e, v_e)_{e}.
\end{align*}
The scaling makes all components dimensionally consistent with the cellwise \(L^2\) inner product \((u_0,v_0)\). Thus, for \(\boldsymbol{\sigma}\in \Sigma_{k,h}^{-1}\) and \(v=(v_0,v_b,v_n,v_e)\in \mathring{M}_{k-2,k-1,k,k}^{-1}\),
\begin{align*}
((\div\div)_w\bs \sigma, v)_{0,h} = & \sum_{T\in \mathcal T_h}\big((\div\div \bs \sigma, v_0)_T - (\tr_2(\bs \sigma), v_b)_{\partial T}\big) \\
& + \sum_{T\in \mathcal T_h}(\bs n^{\intercal}
\bs \sigma\bs n,v_n\bs n_F\cdot\bs n)_{\partial T}+ \sum_{T\in \mathcal T_h}\sum_{e\in \Delta_{d-2}(T)}(\tr_e(\bs \sigma), v_e)_e. 
\end{align*}
If \(\boldsymbol{\sigma}\) is normal--normal continuous across interelement faces, i.e., \(\boldsymbol{\sigma} \in \Sigma^{nn}(\mathcal T_h;\mathbb S)\), this reduces to
\begin{align*}
((\div\div)_w\bs \sigma, v)_{0,h} = & \sum_{T\in \mathcal T_h}\big((\div\div \bs \sigma, v_0)_T - (\tr_2(\bs \sigma), v_b)_{\partial T}\big) \\
& + \sum_{T\in \mathcal T_h}\sum_{e\in \Delta_{d-2}(T)}(\tr_e(\bs \sigma), v_e)_e. 
\end{align*}

By the unisolvence of the DoFs \eqref{eq:vem-dofs} for
\(V_k^{\mathrm{VE}}(T)\), the map
\((Q_{k-2,T}, Q_{k-1,F})_{T,F}\) is an isomorphism from
\(\mathring{V}_{k,h}^{\mathrm{VE}}\) onto
\(\mathbb P_{k-2}(\mathcal T_h)\times
\mathbb P_{k-1}(\mathring{\mathcal F}_h)\).
This motivates the following distributional mixed method for solving the fourth-order elliptic singular perturbation problem
 \eqref{eq:fourthorderequation}, i.e. a discretization of the mixed formulation \eqref{eq:mixform}: find \(\boldsymbol{\sigma}_h \in \Sigma_{k,h}^{nn}\), \(u_h \in \mathring{V}_{k,h}^{\mathrm{VE}}\), and \(\lambda_h \in \mathbb P_k(\mathring{\mathcal E}_h)\) such that
 \begin{subequations}\label{eq:fourth-order-discrete-mixed}
 \begin{align}
 a(\boldsymbol{\sigma}_h, \boldsymbol{\tau}_h)
 + b_h(\boldsymbol{\tau}_h;u_h,\lambda_h)
 &= 0, \qquad\qquad\forall\,\boldsymbol{\tau}_h \in \Sigma_{k,h}^{nn},
 \label{eq:fourth-order-discrete-mixed-a}\\
 b_h(\boldsymbol{\sigma}_h; v_h,\mu_h) - c_h(u_h, v_h)
 &= -(f, \tilde{v}_h),\quad\forall\, v_h \in \mathring{V}_{k,h}^{\mathrm{VE}}, \mu_h\in\mathbb{P}_{k}(\mathring{\mathcal{E}}_h),
 \label{eq:fourth-order-discrete-mixed-b}
 \end{align}
 \end{subequations}
where \(\tilde{v}_h := Q_{k-2,h} v_h + (I-Q_{k-2,h}) v_h^{\mathrm{CR}}\), and
 \begin{align*}
 b_h(\boldsymbol{\tau}_h;v_h,\mu_h)
 &:=
 -\sum_{T\in\mathcal T_h}\Bigl[
 (\div\div\boldsymbol{\tau}_h, Q_{k-2,T} v_h)_T
 -(\tr_2(\boldsymbol{\tau}_h), Q_{k-1,F}v_h)_{\partial T} \\
 &\qquad\qquad\quad
 +\sum_{e\in\Delta_{d-2}(T)}(\tr_e(\boldsymbol{\tau}_h), \mu_h)_e
 \Bigr], \\
 c_h(u_h,v_h)
 &:= (Q_{k-1,h}\nabla_h u_h,Q_{k-1,h}\nabla_h v_h).
 \end{align*}

\begin{remark}\label{rem:symmetric-tensor}
\rm
	The use of the symmetric tensor space in
	\eqref{eq:fourth-order-discrete-mixed} reflects both the physical interpretation
	and the mathematical structure of the problem. Physically,
	\(\boldsymbol{\sigma}\) may be interpreted as a scaled bending-moment tensor in a
	Kirchhoff--Love plate model; see, e.g., \cite{Ciarlet1997Plates}. The symmetry
	of such stress or moment tensors is consistent with the symmetry of the Cauchy
	stress tensor in continuum mechanics, which follows from the balance of angular
	momentum; see, e.g., \cite{GurtinFriedAnand2010}. Mathematically, the mixed
	variable is introduced through
	\(\varepsilon^{-2}\boldsymbol{\sigma}=\nabla^2u\), and the Hessian is inherently
	symmetric. Consequently, the skew-symmetric part of a full matrix-valued tensor
	does not contribute to the Hessian coupling in the present formulation.
	Retaining this symmetry preserves the intrinsic structure of the fourth-order
	operator and reduces the number of tensor components from \(d^2\) to
	\(d(d+1)/2\). We note that mixed methods based on full matrix-valued tensor
	variables have also been developed; see
	\cite{BehrensGuzman2011,HuangTang2025}. In contrast, the present method builds
	the symmetry of the tensor variable directly into the tensor space at both the
	continuous and discrete levels.
\end{remark}

By the definition of the weak Hessian $\nabla_w^2$, 
\[
b_h(\boldsymbol{\tau};v,\mu) = -(\boldsymbol{\tau}, \nabla_w^2 (v, \mu)),\quad\forall\,\boldsymbol{\tau} \in \Sigma_{k,h}^{nn}, v \in \mathring H_1^{\mathrm{nc}}(\mathcal T_h), \mu \in L^2(\mathring{\mathcal E}_h).
\]
As an immediate consequence of \eqref{eq:weakHessian-commute}, the following commutative property holds:
\begin{equation}\label{eq:weakdivdivinterproperty}
b_h(\boldsymbol{\tau};v - I_h^{\mathrm{VE}} v, \mu - Q_{k,\mathcal E_h} \mu) = 0, 
\quad \forall\,\boldsymbol{\tau} \in \Sigma_{k,h}^{nn}.
\end{equation}

We next prove the well-posedness of the distributional mixed method \eqref{eq:fourth-order-discrete-mixed}.

\begin{theorem}\label{thm:fourth-orderdistmixwellposedness}
The distributional mixed method \eqref{eq:fourth-order-discrete-mixed} is well-posed.
\end{theorem}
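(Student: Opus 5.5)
The discrete problem \eqref{eq:fourth-order-discrete-mixed} is square and finite-dimensional, so existence and uniqueness reduce to showing that the homogeneous problem has only the trivial solution. We will in fact aim for a stability estimate uniform in $h$ and $\varepsilon$, mirroring the proof of Theorem~\ref{thm:mix-well-posed}. To this end we apply Zulehner's framework \cite{Zulehner2011} to the perturbed saddle point structure $a(\cdot,\cdot)+b_h$, $b_h-c_h$. On $\Sigma_{k,h}^{nn}$ we take the norm
\[
\|\boldsymbol\tau\|_{\varepsilon,h}^2:=\varepsilon^{-2}\|\boldsymbol\tau\|_0^2+\sup_{(v,\mu)\neq0}\frac{b_h(\boldsymbol\tau;v,\mu)^2}{\|(v,\mu)\|_{U_h}^2}.
\]
On $\mathring V_{k,h}^{\mathrm{VE}}\times\mathbb P_k(\mathring{\mathcal E}_h)$ we take the norm
\[
\|(v,\mu)\|_{U_h}^2:=\|Q_{k-1,h}\nabla_h v\|_0^2+\varepsilon^2|\!|\!|(v,\mu)|\!|\!|_{2,h}^2 .
\]
Both norms are the natural discrete analogues of the continuous ones.

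\textbf{First condition.} This is immediate from the definitions: $a(\boldsymbol\tau,\boldsymbol\tau)+\sup b_h^2/\|\cdot\|_{U_h}^2=\|\boldsymbol\tau\|_{\varepsilon,h}^2$.

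\textbf{Second condition.} For all $(v,\mu)$ we must show
\[
c_h(v,v)+\sup_{\boldsymbol\tau}\frac{b_h(\boldsymbol\tau;v,\mu)^2}{\|\boldsymbol\tau\|_{\varepsilon,h}^2}\gtrsim\|(v,\mu)\|_{U_h}^2 .
\]
The term $c_h(v,v)$ gives $\|Q_{k-1,h}\nabla_hv\|_0^2$ directly. For the Hessian part we use $b_h(\boldsymbol\tau;v,\mu)=-(\boldsymbol\tau,\nabla_w^2(v,\mu))$ with the choice $\boldsymbol\tau=-\varepsilon^2\nabla_w^2(v,\mu)\in\Sigma_{k,h}^{nn}$. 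This gives $b_h=\varepsilon^2\|\nabla_w^2(v,\mu)\|_0^2$, while $\|\boldsymbol\tau\|_{\varepsilon,h}^2\le\varepsilon^{-2}\|\boldsymbol\tau\|_0^2+\varepsilon^{-2}\|\boldsymbol\tau\|_0^2\lesssim\varepsilon^2\|\nabla_w^2(v,\mu)\|_0^2$. The second bound holds because $|b_h(\boldsymbol\tau;\cdot)|\le\|\boldsymbol\tau\|_0\|\nabla_w^2(v,\mu)\|_0$ and $\varepsilon\|\nabla_w^2\|_0\lesssim\|\cdot\|_{U_h}$ by Lemma~\ref{lem:projected-gradient-H2-equivalence}. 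Hence the supremum is at least of order $\varepsilon^2\|\nabla_w^2(v,\mu)\|_0^2\eqsim\varepsilon^2|\!|\!|(v,\mu)|\!|\!|_{2,h}^2$, again by Lemma~\ref{lem:projected-gradient-H2-equivalence}.

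\textbf{Definiteness and uniqueness.} It remains to check that $\|\cdot\|_{U_h}$ is a genuine norm, which holds since $|\!|\!|\cdot|\!|\!|_{2,h}$ is already a norm on this space, as noted after its definition. Uniqueness follows at once: with zero data, test with $(\boldsymbol\sigma_h,u_h,\lambda_h)$ to get $a(\boldsymbol\sigma_h,\boldsymbol\sigma_h)+c_h(u_h,u_h)=0$, so $\boldsymbol\sigma_h=0$ and $Q_{k-1,h}\nabla_hu_h=0$. Then the first equation gives $(\boldsymbol\tau,\nabla_w^2(u_h,\lambda_h))=0$ for all $\boldsymbol\tau$, so $\nabla_w^2(u_h,\lambda_h)=0$, and hence $(u_h,\lambda_h)=0$ by \eqref{eq:normequivalence}.

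\textbf{Main obstacle.} The only nontrivial ingredient is the norm equivalence of Lemma~\ref{lem:projected-gradient-H2-equivalence}, i.e.\ that $\nabla_w^2$ is injective on the pair space, and this is assumed. The care needed is in keeping every constant independent of $\varepsilon$, which the scaling above achieves. Finally, the right-hand side $(f,\tilde v_h)$ is bounded in the dual of $\|\cdot\|_{U_h}$ via a discrete Poincaré inequality for $\tilde v_h$, which yields the uniform stability estimate.
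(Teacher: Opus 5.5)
Your proposal is correct, and it in fact contains two proofs. Your ``Definiteness and uniqueness'' paragraph is exactly the paper's argument. With $f=0$, the energy identity gives $\varepsilon^{-2}\|\boldsymbol\sigma_h\|_0^2+\|Q_{k-1,h}\nabla_hu_h\|_0^2=0$, and injectivity of $\nabla_w^2$ from \eqref{eq:normequivalence} finishes. There is one small difference: the paper already concludes $u_h=0$ at the energy step via \eqref{eq:vem-grad-projection-equivalence} and uses \eqref{eq:normequivalence} only for $\lambda_h$. You instead get $(u_h,\lambda_h)=0$ in one stroke from $|\!|\!|\cdot|\!|\!|_{2,h}$ being a norm. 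Both rest on the same broken Poincar\'e argument.

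The Zulehner part is a genuine addition. It proves well-posedness independently and gives stability uniform in $h$ and $\varepsilon$, the discrete mirror of Theorem~\ref{thm:mix-well-posed}. Your verification is sound. The upper halves of Zulehner's two-sided conditions, which you did not write out, follow at once from $|b_h(\boldsymbol\tau;v,\mu)|\le\|\boldsymbol\tau\|_0\|\nabla_w^2(v,\mu)\|_0$ and \eqref{eq:normequivalence}. The same bound shows $\|\boldsymbol\tau\|_{\varepsilon,h}\eqsim\varepsilon^{-1}\|\boldsymbol\tau\|_0$ on $\Sigma_{k,h}^{nn}$, so the supremum term in your tensor norm is redundant at the discrete level.

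What this extra work buys is an abstract uniform stability estimate that could drive a Strang-type error analysis. The paper does not need it, because Theorems~\ref{thm:errorestimate} and~\ref{thm:robusterror} reproduce the same mechanism by hand: the energy identity plus \eqref{eq:normequivalence}, applied to the error equations. For the statement as posed, the short uniqueness argument alone suffices.
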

\begin{proof}
It suffices to prove that the distributional mixed method \eqref{eq:fourth-order-discrete-mixed} has only the zero solution when \(f=0\). 

Subtracting \eqref{eq:fourth-order-discrete-mixed-b} with \((v_h,\mu_h)=(u_h,\lambda_h)\) from \eqref{eq:fourth-order-discrete-mixed-a} with \(\boldsymbol{\tau}_h=\boldsymbol{\sigma}_h\) gives
\[
\varepsilon^{-2}\|\boldsymbol{\sigma}_h\|_0^2 + \|Q_{k-1,h}\nabla_h u_h\|_0^2=0.
\]
Together with the norm equivalence \eqref{eq:vem-grad-projection-equivalence}, this identity yields $\boldsymbol{\sigma}_h=0$ and $u_h=0$.
Then \eqref{eq:fourth-order-discrete-mixed-a} reduces to 
\[
(\boldsymbol{\tau}, \nabla_w^2(u_h,\lambda_h))=0,\quad\forall\,\boldsymbol{\tau} \in \Sigma_{k,h}^{nn}.
\]
Hence $\nabla_w^{2} (u_h,\lambda_h)=0$, and the norm equivalence \eqref{eq:normequivalence} gives $\lambda_h=0$.
\end{proof}

\subsection{Relation to the Hellan--Herrmann--Johnson method}
Now we discuss the relation between the distributional mixed method \eqref{eq:fourth-order-discrete-mixed} and the classical Hellan--Herrmann--Johnson (HHJ) method \cite{Hellan1967,Herrmann1967,Johnson1973} for solving the fourth-order elliptic singular perturbation problem \eqref{eq:fourthorderequation} in two dimensions.

Let the two-dimensional Lagrange finite element space of degree $k+1$ with homogeneous boundary conditions be
\begin{equation*}
 \mathring{V}_{k+1,h}^{L}
 :=
 \{ w_h\in H_0^1(\Omega): w_h|_T\in \mathbb{P}_{k+1}(T)\quad \forall\,T\in\mathcal{T}_h\}.
\end{equation*}
The space $\mathring{V}_{k,h}^{\mathrm{VE}}\times \mathbb{P}_k(\mathring{\mathcal{E}}_h)$ is naturally isomorphic to $\mathring{V}_{k+1,h}^{L}$. This isomorphism is the key step in identifying \eqref{eq:fourth-order-discrete-mixed} with a two-dimensional HHJ-type method.

\begin{lemma}\label{lem:hhj-isomorphism}
Let $d=2$ and $k\ge 1$. For every $(v_h,\mu_h)\in \mathring{V}_{k,h}^{\mathrm{VE}} \times \mathbb{P}_k(\mathring{\mathcal{E}}_h)$, there exists a unique function $w_h\in \mathring{V}_{k+1,h}^{L}$ such that
\begin{equation}\label{eq:whvhmuh}
 Q_{k-2,T}\, w_h = Q_{k-2,T}\, v_h,\qquad
 Q_{k-1,F}\, w_h = Q_{k-1,F}\, v_h,\qquad
 w_h(e) = \mu_h(e),
\end{equation}
 for all $T \in \mathcal{T}_h$, $F \in \mathring{\mathcal{F}}_h$, and $e \in \mathring{\mathcal{E}}_h$. This defines an isomorphism
 \[
 \mathcal{I}_h : \mathring{V}_{k,h}^{\mathrm{VE}} \times \mathbb{P}_k(\mathring{\mathcal{E}}_h) \to \mathring{V}_{k+1,h}^{L},
 \]
and it holds that
\begin{equation}\label{eq:QhgradVeL}
Q_{k-1,h}\nabla w_h=Q_{k-1,h}\nabla_h v_h, \quad\forall\,v_h\in \mathring{V}_{k,h}^{\mathrm{VE}}.
\end{equation}
\end{lemma}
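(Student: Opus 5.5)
The plan is to show that the functionals in \eqref{eq:whvhmuh} form a unisolvent set of DoFs for the Lagrange space of degree $k+1$. Then $w_h$ exists, is unique and defines $\mathcal I_h$, and \eqref{eq:QhgradVeL} follows from integration by parts.

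In two dimensions the codimension-two subsimplices are the vertices, so $\mathbb P_k(e)\cong\mathbb R$ and $\mu_h$ is just a value at each interior vertex. On a triangle $T$ I will use the moment version of the Lagrange DoFs for $\mathbb P_{k+1}(T)$:
\[
w(\texttt{v}) \ \text{for } \texttt{v}\in\Delta_0(T),\qquad
(w,q)_F \ \text{for } q\in\mathbb P_{k-1}(F),\ F\in\Delta_1(T),\qquad
(w,q)_T \ \text{for } q\in\mathbb P_{k-2}(T).
\]
Their number is $3+3k+k(k-1)/2=\dim\mathbb P_{k+1}(T)$. For unisolvence, suppose all of them vanish. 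On an edge, $w|_F\in\mathbb P_{k+1}(F)$ vanishes at both endpoints, so $w|_F=b_Fq$ with $b_F$ the edge bubble and $q\in\mathbb P_{k-1}(F)$. Testing against $q$ gives $q=0$. Hence $w=\lambda_0\lambda_1\lambda_2 p$ with $p\in\mathbb P_{k-2}(T)$, and testing the interior moment against $p$ gives $p=0$.

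These DoFs are single-valued on shared vertices and edges, and a piecewise $\mathbb P_{k+1}$ function that is single-valued at vertices and edge moments is continuous, because its restriction to each edge is determined by them. So the global space $\mathring V_{k+1,h}^L$ is parametrized bijectively by three pieces of data:
\begin{itemize}
\item interior vertex values, prescribed by $\mu_h(e)$;
\item interior edge moments of degree $k-1$, prescribed by $Q_{k-1,F}v_h$;
\item cell moments of degree $k-2$, prescribed by $Q_{k-2,T}v_h$.
\end{itemize}
The boundary vertex values and boundary edge moments vanish on both sides. For $w_h$ this is the homogeneous boundary condition. For $v_h$, the weak continuity \eqref{eq:vem-weak-continuity} with the jump on boundary faces equal to the trace gives $Q_{k-1,F}v_h=0$ on $F\subset\partial\Omega$.

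By the isomorphism noted just before \eqref{eq:fourth-order-discrete-mixed}, $(Q_{k-2,T},Q_{k-1,F})$ maps $\mathring V^{\mathrm{VE}}_{k,h}$ onto $\mathbb P_{k-2}(\mathcal T_h)\times\mathbb P_{k-1}(\mathring{\mathcal F}_h)$. So the pair $(v_h,\mu_h)$ corresponds exactly to this data, and $\mathcal I_h$ is a bijective linear map. Dimensions match, which serves as a consistency check.

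For \eqref{eq:QhgradVeL}, fix $T$ and $\boldsymbol q\in\mathbb P_{k-1}(T;\mathbb R^2)$ and integrate by parts:
\[
(\nabla w_h-\nabla v_h,\boldsymbol q)_T=-(w_h-v_h,\div\boldsymbol q)_T+\sum_{F\in\Delta_1(T)}\bigl(w_h-v_h,\boldsymbol q\cdot\boldsymbol n\bigr)_F .
\]
Since $\div\boldsymbol q\in\mathbb P_{k-2}(T)$ and $\boldsymbol q\cdot\boldsymbol n|_F\in\mathbb P_{k-1}(F)$, both terms vanish by \eqref{eq:whvhmuh}. On boundary edges this uses $Q_{k-1,F}v_h=0=Q_{k-1,F}w_h$.

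The only delicate point is the bookkeeping: \eqref{eq:whvhmuh} is stated only for interior entities, so the boundary moments must be seen to vanish on both sides, and the continuity of the assembled $w_h$ must come from edge-determination. I expect no real obstacle beyond this.
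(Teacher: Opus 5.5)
Your proposal is correct and takes essentially the same route as the paper. The paper only says that the isomorphism follows from the definitions of the two spaces and that \eqref{eq:QhgradVeL} follows by integration by parts with \eqref{eq:whvhmuh}; you supply the details it leaves implicit (moment-form Lagrange DoFs and their unisolvence, continuity via edge determination, and the boundary moments vanishing on both sides).
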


\begin{proof}
The well-posedness of the isomorphism $\mathcal{I}_h$ follows directly from the definitions of $\mathring{V}_{k,h}^{\mathrm{VE}}$ and $\mathring{V}_{k+1,h}^{L}$.
The identity \eqref{eq:QhgradVeL} follows from the integration by parts and \eqref{eq:whvhmuh}.
\end{proof}

\begin{lemma}
Let $d=2$ and $k\ge 1$. For any $\boldsymbol{\tau}\in \Sigma_{k,h}^{nn}$ and $(v,\mu)\in \mathring{V}_{k,h}^{\mathrm{VE}}\times \mathbb{P}_k(\mathring{\mathcal{E}}_h)$, let $w=\mathcal I_h(v,\mu)\in \mathring{V}_{k+1,h}^{L}$. Then
\begin{equation}\label{eq:bhbhHHJ}
 b_h(\boldsymbol{\tau};v,\mu)
 =
 b_h^{\mathrm{HHJ}}(\boldsymbol{\tau},w),
\end{equation}
where the bilinear form $b_h^{\mathrm{HHJ}}$ is defined by
\begin{equation*}
 b_h^{\mathrm{HHJ}}(\boldsymbol{\tau},w)
 :=
 -\sum_{T\in\mathcal{T}_h} (\boldsymbol{\tau},\nabla^2 w)_T
 +
 \sum_{F\in\mathcal{F}_h} (\bs n^{\intercal}\boldsymbol{\tau}\bs n,[\![\partial_{n_F}w]\!])_F.
\end{equation*}
\end{lemma}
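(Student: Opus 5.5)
Apply the Green identity \eqref{eq:greenidentitydivdiv} elementwise to $w=\mathcal I_h(v,\mu)\in\mathring V_{k+1,h}^L$, which is smooth on each $T$, and then use \eqref{eq:whvhmuh} to replace every pairing with $w$ by the corresponding pairing with $v$ or $\mu$. In two dimensions $\Delta_{d-2}(T)$ is the set of vertices of $T$, and $\tr_e(\boldsymbol\tau)$ is a point value. For $\boldsymbol\tau\in\Sigma_{k,h}^{nn}$ and each $T$ we have
\[
(\div\div\boldsymbol\tau,w)_T=(\boldsymbol\tau,\nabla^2w)_T-(\boldsymbol n^{\intercal}\boldsymbol\tau\boldsymbol n,\partial_nw)_{\partial T}+(\tr_2(\boldsymbol\tau),w)_{\partial T}-\sum_{e\in\Delta_0(T)}\tr_e(\boldsymbol\tau)w(e).
\]
Rearranging and summing over $T$ expresses $-\sum_T(\boldsymbol\tau,\nabla^2w)_T$ in terms of the remaining volume, face and vertex terms.

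Next we identify each term with its counterpart in $b_h(\boldsymbol\tau;v,\mu)$, using that $\boldsymbol\tau|_T\in\mathbb P_k(T;\mathbb S)$.

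\emph{Volume term.} Since $\div\div\boldsymbol\tau\in\mathbb P_{k-2}(T)$, we have $(\div\div\boldsymbol\tau,w)_T=(\div\div\boldsymbol\tau,Q_{k-2,T}w)_T=(\div\div\boldsymbol\tau,Q_{k-2,T}v)_T$.

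\emph{Face term with $\tr_2$.} On each face $F$, $\tr_2(\boldsymbol\tau)=\boldsymbol n^{\intercal}\div\boldsymbol\tau+\partial_t(\boldsymbol t^{\intercal}\boldsymbol\tau\boldsymbol n)\in\mathbb P_{k-1}(F)$. For interior faces this gives $(\tr_2(\boldsymbol\tau),w)_F=(\tr_2(\boldsymbol\tau),Q_{k-1,F}v)_F$. On boundary faces $w=0$, and $Q_{k-1,F}v=0$ by the definition of $\mathring H_1^{\mathrm{nc}}(\mathcal T_h)$, so both sides vanish.

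\emph{Vertex term.} At interior vertices $w(e)=\mu(e)$ by \eqref{eq:whvhmuh}. At boundary vertices $w(e)=0$, and $\mu$ is extended by zero there. Hence the vertex sums agree.

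\emph{Normal--normal term.} Since $w$ is continuous, $\partial_t w$ is single-valued on each face. The term $\sum_T(\boldsymbol n^{\intercal}\boldsymbol\tau\boldsymbol n,\partial_n w)_{\partial T}$ therefore becomes $\sum_{F}(\boldsymbol n^{\intercal}\boldsymbol\tau\boldsymbol n,[\![\partial_{n_F}w]\!])_F$. Here we use normal--normal continuity of $\boldsymbol\tau$ on interior faces, and the jump convention on boundary faces.

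Combining these identifications with the summed Green identity gives exactly $b_h(\boldsymbol\tau;v,\mu)=b_h^{\mathrm{HHJ}}(\boldsymbol\tau,w)$. The overall signs check out: $-(\boldsymbol\tau,\nabla^2w)_T=-(\div\div\boldsymbol\tau,w)_T-(\boldsymbol n^{\intercal}\boldsymbol\tau\boldsymbol n,\partial_nw)_{\partial T}+(\tr_2,w)_{\partial T}-\sum_e\tr_e w(e)$. After adding the jump term in $b_h^{\mathrm{HHJ}}$, the $\partial_n w$ contributions cancel. What remains matches $b_h$ with $Q_{k-2,T}v$, $Q_{k-1,F}v$ and $\mu$.

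The main obstacle is the degree bookkeeping for $\tr_2(\boldsymbol\tau)$ on faces, together with the boundary-face and boundary-vertex conventions. One must confirm that $\tr_2(\boldsymbol\tau)$ lies in $\mathbb P_{k-1}(F)$, so that only the $\mathbb P_{k-1}(F)$-moments of $w$ enter. One must also confirm that no tangential–normal contribution is double counted in the corner terms $\tr_e$. I would verify this by checking directly that the 2D Green identity's vertex term is the jump of $\boldsymbol t^{\intercal}\boldsymbol\tau\boldsymbol n$ at the corners of $T$, consistent with \eqref{eq:divdiv-traces}.
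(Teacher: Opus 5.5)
Your proposal is correct and follows the paper's argument. You apply the Green identity \eqref{eq:greenidentitydivdiv} elementwise to $w$. You then replace the volume, $\tr_2$, and vertex pairings with $w$ by the corresponding pairings with $Q_{k-2,T}v$, $Q_{k-1,F}v$, and $\mu$, and regroup the $\boldsymbol n^{\intercal}\boldsymbol\tau\boldsymbol n\,\partial_n w$ terms into face jumps using normal--normal continuity. The only difference is that you spell out the degree and boundary bookkeeping ($\div\div\boldsymbol\tau|_T\in\mathbb P_{k-2}(T)$, $\tr_2(\boldsymbol\tau)|_F\in\mathbb P_{k-1}(F)$, vanishing boundary-face and boundary-vertex terms) that the paper compresses into ``by the definition of $w$''; your remark that $\partial_t w$ is single-valued is true but not needed.
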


\begin{proof}
 By the definition of $w$, 
 \begin{equation*}
 b_h(\boldsymbol{\tau}; v,\mu)
 =
 -
 \sum_{T\in\mathcal{T}_h}
 \big(
 (\div\div\boldsymbol{\tau}, w)_T
 -
 (\tr_2(\boldsymbol{\tau}), w)_{\partial T}
 +
 \sum_{e\in\Delta_{d-2}(T)} (\tr_e(\boldsymbol{\tau}), w)_e
 \big).
 \end{equation*}
 Applying the Green's identity \eqref{eq:greenidentitydivdiv} on each element $T$, with $w|_T\in \mathbb P_{k+1}(T)$, yields
 \begin{equation*}
 \begin{aligned}
 (\div\div\boldsymbol{\tau}, w)_T
 &- (\tr_2(\boldsymbol{\tau}), w)_{\partial T}
 + \sum_{e\in\Delta_{d-2}(T)} (\tr_e(\boldsymbol{\tau}), w)_e
 \\
 &= (\boldsymbol{\tau},\nabla^2 w)_T
 - (\bs n^{\intercal}\boldsymbol{\tau}\bs n,\partial_n w)_{\partial T}.
 \end{aligned}
 \end{equation*}
A combination of the last two equations yields
 \begin{equation*}
 b_h(\boldsymbol{\tau}; v,\mu)
 =
 -
 \sum_{T\in\mathcal{T}_h}
 \left[
 (\boldsymbol{\tau},\nabla^2 w)_T
 -
 (\bs n^{\intercal}\boldsymbol{\tau}\bs n,\partial_{n} w)_{\partial T}
 \right].
 \end{equation*}
 Since $\boldsymbol{\tau}\in \Sigma_{k,h}^{nn}$, its normal--normal component $\bs n^{\intercal}\boldsymbol{\tau}\bs n$ is single-valued across each interior face. Therefore, summing the boundary contributions elementwise and regrouping them over the mesh faces gives \eqref{eq:bhbhHHJ}.
\end{proof}

Thus, under the isomorphism $\mathcal I_h$, the bilinear form $b_h(\boldsymbol{\tau};v,\mu)$ coincides in two dimensions with the HHJ bilinear form $b_h^{\mathrm{HHJ}}(\boldsymbol{\tau},\mathcal I_h(v,\mu))$.

\begin{theorem}\label{thm:hhj-equivalence}
Let $d=2$ and $k\ge 1$. Let \((\boldsymbol{\sigma}_h,u_h,\lambda_h)\in \Sigma_{k,h}^{nn} \times
\mathring{V}_{k,h}^{\mathrm{VE}}\times\mathbb{P}_{k}(\mathring{\mathcal{E}}_h)\)
be the solution of the distributional mixed method \eqref{eq:fourth-order-discrete-mixed}. Then $(\boldsymbol{\sigma}_h,w_h)\in \Sigma_{k,h}^{nn}\times \mathring{V}_{k+1,h}^{L}$ with $w_h=\mathcal I_h(u_h,\lambda_h)$ satisfies the following HHJ-type method:
\begin{subequations}\label{eq:fourth-order-HHJtype}
\begin{align}
\label{eq:fourth-order-HHJtype1}
 \varepsilon^{-2}(\boldsymbol{\sigma}_h,\boldsymbol{\tau}_h)
 + b_h^{\mathrm{HHJ}}(\boldsymbol{\tau}_h,w_h)
 &= 0,\qquad\qquad\forall\,\boldsymbol{\tau}_h \in \Sigma_{k,h}^{nn},\\
\label{eq:fourth-order-HHJtype2}
 b_h^{\mathrm{HHJ}}(\boldsymbol{\sigma}_h,\chi_h)
 - (Q_{k-1,h}\nabla w_h,Q_{k-1,h}\nabla \chi_h)
 &= -(f, \tilde{\chi}_h),\quad \forall\,\chi_h \in \mathring{V}_{k+1,h}^{L},
\end{align} 
\end{subequations}
where
$\tilde{\chi}_h=Q_{k-2,h}\chi_h + (I-Q_{k-2,h})\chi_h^{\mathrm{CR}}$.
\end{theorem}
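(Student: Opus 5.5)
The plan is to transfer \eqref{eq:fourth-order-discrete-mixed} to the Lagrange space using the isomorphism $\mathcal I_h$ of Lemma~\ref{lem:hhj-isomorphism} and the identity \eqref{eq:bhbhHHJ}. First set $w_h=\mathcal I_h(u_h,\lambda_h)$. By \eqref{eq:bhbhHHJ}, $b_h(\boldsymbol\tau_h;u_h,\lambda_h)=b_h^{\mathrm{HHJ}}(\boldsymbol\tau_h,w_h)$ for every $\boldsymbol\tau_h\in\Sigma_{k,h}^{nn}$. Since $a(\boldsymbol\sigma_h,\boldsymbol\tau_h)=\varepsilon^{-2}(\boldsymbol\sigma_h,\boldsymbol\tau_h)$, equation \eqref{eq:fourth-order-discrete-mixed-a} becomes \eqref{eq:fourth-order-HHJtype1} verbatim.

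For \eqref{eq:fourth-order-HHJtype2}, fix an arbitrary $\chi_h\in\mathring V_{k+1,h}^{L}$. Because $\mathcal I_h$ is bijective, there is a unique $(v_h,\mu_h)$ with $\mathcal I_h(v_h,\mu_h)=\chi_h$. We test \eqref{eq:fourth-order-discrete-mixed-b} with this pair and translate each term.
\begin{itemize}
\item[(i)] By \eqref{eq:bhbhHHJ}, $b_h(\boldsymbol\sigma_h;v_h,\mu_h)=b_h^{\mathrm{HHJ}}(\boldsymbol\sigma_h,\chi_h)$.
\item[(ii)] By \eqref{eq:QhgradVeL}, applied to both $u_h$ and $v_h$, $c_h(u_h,v_h)=(Q_{k-1,h}\nabla w_h,Q_{k-1,h}\nabla\chi_h)$.
\item[(iii)] The load term requires $\tilde v_h=\tilde\chi_h$, i.e.
\[
Q_{k-2,h}v_h=Q_{k-2,h}\chi_h
\quad\text{and}\quad
v_h^{\mathrm{CR}}=\chi_h^{\mathrm{CR}}.
\]
\end{itemize}

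The first part of (iii) is immediate from the first relation in \eqref{eq:whvhmuh}. For the second part, note that $\chi_h\in H_0^1(\Omega)\subset\mathring H_1^{\mathrm{nc}}(\mathcal T_h)$, so $\chi_h^{\mathrm{CR}}=I_{1,h}^{\mathrm{VE}}\chi_h$ is well defined. Its degrees of freedom are the face means $Q_{0,F}\chi_h$. On interior faces, $Q_{0,F}\chi_h=Q_{0,F}Q_{k-1,F}\chi_h=Q_{0,F}Q_{k-1,F}v_h=Q_{0,F}v_h$, using the second relation in \eqref{eq:whvhmuh} and $k\ge1$. On boundary faces, both means vanish: $\chi_h=0$ on $\partial\Omega$, and $v_h\in\mathring V_{k,h}^{\mathrm{VE}}$ has $Q_{k-1,F}v_h=0$ there by the boundary jump condition. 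Hence the two Crouzeix--Raviart interpolants share all degrees of freedom, so they coincide, and $\tilde v_h=\tilde\chi_h$.

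Substituting (i)--(iii) into \eqref{eq:fourth-order-discrete-mixed-b} gives \eqref{eq:fourth-order-HHJtype2} for this $\chi_h$, and $\chi_h$ was arbitrary. The main point needing care is the surjectivity of $\mathcal I_h$, which ensures every Lagrange test function is reached. This is provided by Lemma~\ref{lem:hhj-isomorphism}. The lemma's well-posedness argument rests on a dimension count: in 2D the quadratic-and-higher Lagrange degrees of freedom (vertex values, edge moments of degree $k-1$, interior moments of degree $k-2$) match the virtual element degrees of freedom plus the vertex multipliers. We assume that lemma, so the theorem reduces to the term-by-term identification above.
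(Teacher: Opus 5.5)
Your proposal is correct and follows the paper's own proof: you pull the distributional mixed method back through the isomorphism $\mathcal I_h$ and use \eqref{eq:bhbhHHJ}, \eqref{eq:QhgradVeL}, \eqref{eq:whvhmuh}, and $\chi_h^{\mathrm{CR}}=v_h^{\mathrm{CR}}$, with bijectivity of $\mathcal I_h$ ensuring that every Lagrange test function is reached. Your face-mean argument for $\chi_h^{\mathrm{CR}}=v_h^{\mathrm{CR}}$ (interior faces via $Q_{0,F}Q_{k-1,F}$, boundary faces via the vanishing traces) simply fills in a step the paper only asserts.
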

\begin{proof}
Set $\chi_h=\mathcal I_h(v_h,\mu_h)$ for $v_h\in\mathring{V}_{k,h}^{\mathrm{VE}}$ and $\mu_h\in\mathbb{P}_{k}(\mathring{\mathcal{E}}_h)$.
By \eqref{eq:whvhmuh}-\eqref{eq:bhbhHHJ} and the fact $\chi_h^{\mathrm{CR}}=v_h^{\mathrm{CR}}$, the distributional mixed method \eqref{eq:fourth-order-discrete-mixed} can be recast as
\begin{align*}
 \varepsilon^{-2}(\boldsymbol{\sigma}_h,\boldsymbol{\tau}_h)
 + b_h^{\mathrm{HHJ}}(\boldsymbol{\tau}_h,w_h)
 &= 0, \qquad\quad\;\;\forall\,\boldsymbol{\tau}_h \in \Sigma_{k,h}^{nn},\\
 b_h^{\mathrm{HHJ}}(\boldsymbol{\sigma}_h,\chi_h)
 - (Q_{k-1,h}\nabla w_h,Q_{k-1,h}\nabla \chi_h)
 &= -(f, \tilde{\chi}_h),\;\;\forall\, v_h \in \mathring{V}_{k,h}^{\mathrm{VE}}, \mu_h\in\mathbb{P}_{k}(\mathring{\mathcal{E}}_h).
\end{align*} 
The equivalence between \eqref{eq:fourth-order-discrete-mixed} and the HHJ-type method \eqref{eq:fourth-order-HHJtype} therefore follows from the isomorphism $\mathcal I_h$.
\end{proof}

For comparison, the HHJ method for the two-dimensional problem \eqref{eq:fourthorderequation} considered in \cite{LiuHuangWang2020} reads as follows: find $(\boldsymbol{\sigma}_h,w_h)\in \Sigma_{k,h}^{nn}\times \mathring{V}_{k+1,h}^{L}$ such that
\begin{subequations}\label{eq:fourth-order-HHJ}
\begin{align}
\label{eq:fourth-order-HHJ1}
 \varepsilon^{-2}(\boldsymbol{\sigma}_h,\boldsymbol{\tau}_h)
 + b_h^{\mathrm{HHJ}}(\boldsymbol{\tau}_h,w_h)
 &= 0,
 &&\forall\,\boldsymbol{\tau}_h\in \Sigma_{k,h}^{nn},\\
\label{eq:fourth-order-HHJ2}
 b_h^{\mathrm{HHJ}}(\boldsymbol{\sigma}_h,v_h)
 - (\nabla w_h,\nabla v_h)
 &= -(f,v_h),
 &&\forall\,v_h\in \mathring{V}_{k+1,h}^{L}.
\end{align}
\end{subequations}

Compared with the HHJ method \eqref{eq:fourth-order-HHJ}, the HHJ-type formulation \eqref{eq:fourth-order-HHJtype} uses the projected gradient term 
\[
(Q_{k-1,h}\nabla w_h,Q_{k-1,h}\nabla \chi_h)=(Q_{k-1,h}\nabla_hu_h,Q_{k-1,h}\nabla_hv_h)
\] 
instead of $(\nabla w_h,\nabla v_h)$, and it uses the modified load term $(f,\tilde{\chi}_h)$ associated with the nonconforming scalar variable. The fourth-order HHJ bilinear form is unchanged after identifying $w_h$ with $(v_h,\mu_h)$ through $\mathcal I_h$. Thus the difference lies in the discretization of the second-order term: \eqref{eq:fourth-order-HHJtype}, equivalently \eqref{eq:fourth-order-discrete-mixed}, uses an $H^1$-nonconforming scalar space, whereas \eqref{eq:fourth-order-HHJ} uses an $H^1$-conforming Lagrange space.

The classical HHJ method with a Lagrange scalar space is intrinsically two-dimensional. By replacing that scalar space with the pair $\mathring{V}_{k,h}^{\mathrm{VE}}\times\mathbb P_k(\mathring{\mathcal E}_h)$ and retaining the normal--normal continuous tensor space $\Sigma_{k,h}^{nn}$, the distributional mixed method \eqref{eq:fourth-order-discrete-mixed} extends the HHJ framework to arbitrary spatial dimension.

 \subsection{Error analysis}
 We equip \(\mathring{V}_{k,h}^{\mathrm{VE}}\times \mathbb{P}_{k}(\mathring{\mathcal{E}}_h)\)
 with the parameter-dependent norm
 \[
 |\! |\!|(v_h,\mu_h)|\!|\!|_{\varepsilon,h}^2
 := \varepsilon^2|\! |\!|(v_h,\mu_h)|\!|\!|_{2,h}^2 + | v_h |_{1,h}^2.
 \]

Let \((\boldsymbol{\sigma},u)\in
H^{-1}(\operatorname{div}\operatorname{div},\Omega;\mathbb S)
\times H_0^1(\Omega)\) be the solution of \eqref{eq:mixform}.
Whenever the codimension-two traces of \(u\) are well-defined, we define
\(\lambda\in L^2(\mathring{\mathcal E}_h)\) by
\[
\lambda|_e:=u|_e\qquad \forall\, e\in\mathring{\mathcal E}_h,
\]
and set \(\boldsymbol{\sigma}_I:=I_h^{nn}\boldsymbol{\sigma}\) whenever \(I_h^{nn}\boldsymbol\sigma\) is well-defined.
Define the residual functional
\[
\mathcal R_h(v_h,\mu_h)
:=
b_h(\boldsymbol\sigma_I;v_h,\mu_h)
-
c_h(u,v_h)
+
(f,\widetilde v_h).
\]

 \begin{lemma}\label{lem:Consistency-mixed1}
 Let \((\boldsymbol{\sigma},u)\in H^{-1}(\div\div,\Omega;\mathbb S)\times
H_0^2(\Omega)\) be the solution of \eqref{eq:mixform}.
Then
 \begin{equation}\label{eq:Consistency-mixed1}
 a(\boldsymbol{\sigma},\boldsymbol{\tau}_h)
 + b_h(\boldsymbol{\tau}_h; I_h^{\mathrm{VE}}u,Q_{k,\mathcal E_h}\lambda) = 0
 \qquad \forall\, \boldsymbol{\tau}_h \in \Sigma_{k,h}^{nn}.
 \end{equation}
 \end{lemma}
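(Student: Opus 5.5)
Since $u\in H_0^2(\Omega)$, Lemma~\ref{lem:equivalence} gives $\boldsymbol\sigma=\varepsilon^2\nabla^2u$, and hence $a(\boldsymbol\sigma,\boldsymbol\tau_h)=(\nabla^2u,\boldsymbol\tau_h)$ for every $\boldsymbol\tau_h\in\Sigma_{k,h}^{nn}$. For the second term we use the commuting property \eqref{eq:weakdivdivinterproperty}, equivalently \eqref{eq:weakHessian-commute}, which reduces it to the continuous data:
\[
b_h(\boldsymbol\tau_h;I_h^{\mathrm{VE}}u,Q_{k,\mathcal E_h}\lambda)=b_h(\boldsymbol\tau_h;u,\lambda)=-(\boldsymbol\tau_h,\nabla_w^2(u,\lambda)).
\]
Here $u\in H_0^1(\Omega)$, and $\lambda=u|_e$ is well defined in $L^2(e)$ because $u\in H^2$ has traces on codimension-two subsimplices. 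Thus \eqref{eq:Consistency-mixed1} is equivalent to the identity
\[
(\nabla_w^2(u,\lambda),\boldsymbol\tau_h)=(\nabla^2u,\boldsymbol\tau_h)\qquad\forall\,\boldsymbol\tau_h\in\Sigma_{k,h}^{nn},
\]
that is, the weak Hessian of $(u,u|_{\mathcal E_h})$ is the $L^2$ projection of the true Hessian onto $\Sigma_{k,h}^{nn}$.

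To prove this identity, we apply the Green identity \eqref{eq:greenidentitydivdiv} on each $T$ with $\boldsymbol\tau_h|_T\in\mathbb P_k(T;\mathbb S)$ and $v=u|_T\in H^2(T)$. This gives
\[
(\div\div\boldsymbol\tau_h,u)_T-(\tr_2(\boldsymbol\tau_h),u)_{\partial T}+\sum_{e\in\Delta_{d-2}(T)}(\tr_e(\boldsymbol\tau_h),u)_e=(\boldsymbol\tau_h,\nabla^2u)_T-(\boldsymbol n^{\intercal}\boldsymbol\tau_h\boldsymbol n,\partial_nu)_{\partial T}.
\]
The left-hand side is exactly the $T$-contribution to $(\nabla_w^2(u,\lambda),\boldsymbol\tau_h)$, provided that:
\begin{itemize}
\item on interior $e$ we use $\lambda=u|_e$;
\item on boundary $e$ the edge term vanishes because $u=0$ on $\partial\Omega$ (the multiplier is extended by zero there).
\end{itemize}
Summing over $T$, it remains to show that $\sum_T(\boldsymbol n^{\intercal}\boldsymbol\tau_h\boldsymbol n,\partial_nu)_{\partial T}=0$.

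The main point is this last cancellation, and it is where the regularity $u\in H_0^2(\Omega)$ and the normal--normal continuity of $\Sigma_{k,h}^{nn}$ are both used. On an interior face $F$ we have $\nabla u\in H^1(\Omega;\mathbb R^d)$, so $\nabla u$ is single-valued on $F$. Hence $\partial_{n_F}u$ is single-valued, while $\partial_nu$ flips sign between $T^+$ and $T^-$. Since $\boldsymbol\tau_{nn}$ is single-valued on $F$ (the sign of $\boldsymbol n$ cancels in the quadratic form), the two contributions cancel. On boundary faces, $\partial_nu=0$ because $u\in H_0^2(\Omega)$.

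A minor care point is that \eqref{eq:greenidentitydivdiv} is stated for $v\in H^2(T)$, and the edge traces $(\tr_e(\boldsymbol\tau_h),u)_e$ require $u|_e$. For $d\le3$ this follows from $H^2\hookrightarrow C^0$. In general dimension we can argue by density of smooth functions, as the paper already implicitly assumes when defining $\lambda$. This completes the argument.
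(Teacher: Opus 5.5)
Your proposal is correct and follows the paper's proof: you identify $\boldsymbol\sigma=\varepsilon^2\nabla^2u$, integrate by parts elementwise via \eqref{eq:greenidentitydivdiv} to obtain $a(\boldsymbol\sigma,\boldsymbol\tau_h)+b_h(\boldsymbol\tau_h;u,\lambda)=0$, and pass to $(I_h^{\mathrm{VE}}u,Q_{k,\mathcal E_h}\lambda)$ via \eqref{eq:weakdivdivinterproperty}; in that integration by parts the normal--normal boundary terms cancel by the normal--normal continuity of $\boldsymbol\tau_h$ and $u\in H_0^2(\Omega)$. The only differences are that you apply the commuting property first rather than last and spell out the cancellation the paper leaves implicit; incidentally, the codimension-two traces of $u\in H^2$ exist in any dimension by the trace theorem, so no density argument is needed.
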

 \begin{proof}
Since \(\boldsymbol\sigma=\varepsilon^2\nabla^2 u\), we have
\(a(\boldsymbol\sigma,\boldsymbol\tau_h)=(\nabla^2 u,\boldsymbol\tau_h)\).
An elementwise integration by parts gives
\[
(\nabla^2 u,\boldsymbol\tau_h)
=
-b_h(\boldsymbol\tau_h;u,\lambda),
\]
and hence
\[
a(\boldsymbol\sigma,\boldsymbol\tau_h)
+
b_h(\boldsymbol\tau_h;u,\lambda)
=0
\qquad
\forall\,\boldsymbol\tau_h\in\Sigma_{k,h}^{nn}.
\]
The desired identity follows from the interpolation property
\eqref{eq:weakdivdivinterproperty}.
\end{proof}

We next estimate the consistency residual associated with
\eqref{eq:fourth-order-discrete-mixed-b}.

\begin{lemma}\label{lem:Compatibilityerror}
Let
$
(\boldsymbol\sigma,u)\in
H^{-1}(\operatorname{div}\operatorname{div},\Omega;\mathbb S)
\times H_0^1(\Omega)
$
be the solution of \eqref{eq:mixform}. Assume that
\(\boldsymbol\sigma\in H^{k+1}(\Omega;\mathbb S)\) and
\(u\in H^{k+1}(\Omega)\). Then, for \(k\ge1\),
 \begin{equation}\label{eq:Compatibilityerror}
|\mathcal R_h(v_h,\mu_h)|
 \lesssim
 h^{k+1}\bigl(|\boldsymbol{\sigma}|_{k+1}+|u|_{k+1}\bigr)\,
 |\!|\!|(v_h,\mu_h)|\!|\!|_{2,h} \;\;\forall\,(v_h,\mu_h)\in
\mathring V_{k,h}^{\mathrm{VE}}\times
\mathbb P_k(\mathring{\mathcal E}_h).
 \end{equation}
\end{lemma}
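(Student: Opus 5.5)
The plan is to split \(\mathcal R_h\) into an interpolation part for the tensor and a consistency part for the smooth pair \((\boldsymbol\sigma,u)\), then use the identity \(f=\div\div\boldsymbol\sigma-\Delta u\) (valid in \(L^2\) under the stated regularity) to cancel almost everything. Concretely, I would write
\[
b_h(\boldsymbol\sigma_I;v_h,\mu_h)=b_h(\boldsymbol\sigma_I-\boldsymbol\sigma;v_h,\mu_h)+b_h(\boldsymbol\sigma;v_h,\mu_h),
\]
where \(b_h(\boldsymbol\sigma;\cdot)\) is understood through its defining elementwise formula, which makes sense for \(\boldsymbol\sigma\in H^{k+1}(\Omega;\mathbb S)\).

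\emph{Step 1: the smooth part.} For smooth \(\boldsymbol\sigma\), the trace \(\tr_2(\boldsymbol\sigma)\) is single-valued up to the orientation sign across interior faces, and \(Q_{k-1,F}v_h\) is single-valued there and vanishes on boundary faces. Hence the face terms cancel. Similarly, the sum of \(\tr_e(\boldsymbol\sigma)\) over the elements of \(\omega_e\) vanishes for each interior \(e\), and \(\mu_h\) is single-valued there. This should give \(b_h(\boldsymbol\sigma;v_h,\mu_h)=-(\div\div\boldsymbol\sigma,Q_{k-2,h}v_h)\).

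For the second-order term, I would write \(c_h(u,v_h)=(Q_{k-1,h}\nabla u,\nabla_h v_h)\). I then insert \(I_h^{\div}\nabla u\) and use \eqref{eq:hdiv-projection-difference}. For the BDM part, elementwise integration by parts, the weak continuity \eqref{eq:vem-weak-continuity} (normal traces of \(I_h^{\div}\nabla u\) are in \(\mathbb P_{k-1}(F)\)) and \eqref{eq:commutationIhdiv} should give \(-(\Delta u,Q_{k-2,h}v_h)\). The face remainder in \eqref{eq:hdiv-projection-difference} is bounded by trace inequalities by \(h^{k}|u|_{k+1}\) times
\[
\Big(\sum_T\sum_F h_T^{-1}\|Q_{k-1,F}v_h-Q_{k-2,T}v_h\|_{0,F}^2\Big)^{1/2}.
\]
Since both projections annihilate the difference \(v_h-v_h^{\mathrm{CR}}\) only up to the \(\delta_h\)-terms of \(|\!|\!|\cdot|\!|\!|_{2,\mathrm{CR}}\), and \(Q_{k-1,F}v^{\mathrm{CR}}-Q_{k-2,T}v^{\mathrm{CR}}\) is an \(O(h)\) linear oscillation, I expect the bound \(h^{2}|\!|\!|(v_h,\mu_h)|\!|\!|_{2,h}\) via Lemma~\ref{lem:projected-gradient-H2-equivalence}. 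For \(k=1\) this term is absent. Collecting terms, the \(Q_{k-2,h}v_h\) parts of \((f,\widetilde v_h)\) cancel exactly. What remains is
\[
\mathcal R_h=b_h(\boldsymbol\sigma_I-\boldsymbol\sigma;v_h,\mu_h)+(\div\div\boldsymbol\sigma-\Delta u,(I-Q_{k-2,h})v_h^{\mathrm{CR}})+(\text{face remainder}).
\]

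\emph{Step 2: the interpolation part.} I split \(v_h=\delta_h+v_h^{\mathrm{CR}}\) and apply the Green identity \eqref{eq:greenidentitydivdiv} to the linear piece \(v_h^{\mathrm{CR}}\), for which \(\nabla^2v_h^{\mathrm{CR}}=0\). Then \(b_h(\boldsymbol\tau;v_h,\mu_h)\) for \(\boldsymbol\tau=\boldsymbol\sigma_I-\boldsymbol\sigma\in\Sigma^{nn}(\mathcal T_h;\mathbb S)\cap H^{2}(\mathcal T_h;\mathbb S)\) becomes a sum of four kinds of terms:
\begin{itemize}
\item cell terms against \(Q_{k-2,T}\delta_h\);
\item face terms against \(Q_{k-1,F}\delta_h\);
\item edge terms against \(Q_{k,e}(v_h^{\mathrm{CR}}-\mu_h)\);
\item the normal--normal term \((\boldsymbol\tau_{nn},[\![\partial_nv_h^{\mathrm{CR}}]\!])_F\), which uses the nn-continuity of \(\boldsymbol\tau\).
\end{itemize}
Each term is paired with exactly one component of \(|\!|\!|\cdot|\!|\!|_{2,\mathrm{CR}}\). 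Using trace and inverse inequalities and \eqref{eq:intererrorSigma} (applied to \(\boldsymbol\tau\) and its derivatives on each element), every term is bounded by \(h^{k+1}|\boldsymbol\sigma|_{k+1}\,|\!|\!|(v_h,\mu_h)|\!|\!|_{2,\mathrm{CR}}\). Lemma~\ref{lem:projected-gradient-H2-equivalence} then converts this to \(|\!|\!|\cdot|\!|\!|_{2,h}\).

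\emph{Step 3: the load-oscillation term (main obstacle).} The term \((f,(I-Q_{k-2,h})v_h^{\mathrm{CR}})\) is trivial for \(k\ge3\), since it vanishes. For \(k=1,2\), a naive bound only gives \(h^{k}|v_h|_{1,h}\), which is not measured by \(|\!|\!|\cdot|\!|\!|_{2,h}\). I would therefore not estimate it crudely. Instead, I would write \(f=\div\div\boldsymbol\sigma-\Delta u\) and, on each \(T\), integrate by parts against the linear function \(v_h^{\mathrm{CR}}\): the Green identity \eqref{eq:greenidentitydivdiv} for \(\div\div\boldsymbol\sigma\), and Green's formula for \(\Delta u\). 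Because \(\nabla^2v_h^{\mathrm{CR}}=0\), only boundary terms survive. After subtracting suitable face and edge projections of the smooth traces of \(\boldsymbol\sigma\) and \(\nabla u\), these become:
\begin{itemize}
\item jump terms in \([\![v_h^{\mathrm{CR}}]\!]\), controlled by Lemma~\ref{lem:CRjump-by-edge};
\item jump terms in \([\![\partial_nv_h^{\mathrm{CR}}]\!]\);
\item edge terms \(v_h^{\mathrm{CR}}-\mu_h\).
\end{itemize}
The \(Q_{k-2,h}\)-part is handled in the same way, with its \(L^2\)-orthogonality supplying the extra powers of \(h\). I expect the delicate point to be arranging the projections so that each trace of \(\boldsymbol\sigma\) or \(\nabla u\) carries a full approximation factor (\(h^{k+1}\) overall), rather than losing one power of \(h\) to a trace inequality. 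If this fails for \(k=1\), I would first try to absorb the leftover into the \(\varepsilon\)-independent part via \eqref{eq:vem-hdiv-commuting}.
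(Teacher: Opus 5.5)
Your argument works for $k\ge3$. There $(I-Q_{k-2,h})v_h^{\mathrm{CR}}=0$, and your four-term expansion of $b_h(\boldsymbol\sigma_I-\boldsymbol\sigma;v_h,\mu_h)$ is correct. The face remainder from \eqref{eq:hdiv-projection-difference} is $O(h^{k+1})$; the bracket carries a factor $h$, not $h^2$, but that suffices.

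For $k=1,2$ there is a genuine gap. The three pieces you isolate are not individually of order $h^{k+1}$, so no one-at-a-time estimate can close.

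\textbf{Step 2.} You silently replace $Q_{k-2,T}v_h^{\mathrm{CR}}$ by $v_h^{\mathrm{CR}}$, and for $k=1$ also $Q_{0,F}v_h^{\mathrm{CR}}$ by $v_h^{\mathrm{CR}}$. Both are false for $k\le2$. Since $\div\div\boldsymbol\sigma_I\in\mathbb P_{k-2}(T)$, the omitted volume term is
\[
(\div\div(\boldsymbol\sigma_I-\boldsymbol\sigma),(I-Q_{k-2,T})v_h^{\mathrm{CR}})_T=-(\div\div\boldsymbol\sigma,(I-Q_{k-2,T})v_h^{\mathrm{CR}})_T .
\]
For $k=1$ this is $-(\div\div\boldsymbol\sigma,v_h)_T$. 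Indeed $b_h(\boldsymbol\sigma;v_h,\mu_h)=0$ when $k=1$, whereas $b_h(\boldsymbol\sigma_I;v_h,\mu_h)\approx-(\div\div\boldsymbol\sigma,v_h)$. Hence $b_h(\boldsymbol\sigma_I-\boldsymbol\sigma;\cdot)$ is $O(1)$ relative to $|\!|\!|(v_h,\mu_h)|\!|\!|_{2,h}$; take $(v_h,\mu_h)$ interpolating a fixed $w\in H_0^2(\Omega)$ to see this.

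\textbf{Step 3.} For the same reason this step cannot succeed. For $k=1$ your ``oscillation'' $(f,(I-Q_{-1,h})v_h^{\mathrm{CR}})=(f,v_h)$ is the full load pairing. Integrating by parts leaves $O(1)$ terms such as $\sum_F(\boldsymbol\sigma_{nn},[\![\partial_n v_h^{\mathrm{CR}}]\!])_F$, from which no projection can be subtracted. For $k=2$, the term $(f,(I-Q_{0,h})v_h^{\mathrm{CR}})$, the omitted Step-2 term, and the $v_h^{\mathrm{CR}}-Q_{0,T}v_h^{\mathrm{CR}}$ part of your face remainder are each only $O(h^2)\,|v_h|_{1,h}$. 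Only their sum is $O(h^3)$.

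\textbf{Bookkeeping for $k=1$.} Declaring the face remainder ``absent'' drops $c_h(u,v_h)=(\nabla u,\nabla_h v_h)\neq0$. In any case \eqref{eq:hdiv-projection-difference} requires $k\ge2$.

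\textbf{The missing idea.} Keep the test function $\tilde v_h$ intact and split $f=\div\div\boldsymbol\sigma-\Delta u$ \emph{before} separating anything. That is, estimate
\[
I_1:=b_h(\boldsymbol\sigma_I;v_h,\mu_h)+(\div\div\boldsymbol\sigma,\tilde v_h),\qquad I_2:=-c_h(u,v_h)-(\Delta u,\tilde v_h),
\]
in which the large pieces cancel exactly.

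For $I_1$:
\begin{enumerate}
\item Use $\div\div\boldsymbol\sigma_I\in\mathbb P_{k-2}$ to replace $Q_{k-2,T}v_h$ by $\tilde v_h$.
\item Cancel the smooth traces $\tr_2(\boldsymbol\sigma)$ and $\tr_e(\boldsymbol\sigma)$ against the single-valued $Q_{k-1,F}v_h$ and $\mu_h$.
\item Apply \eqref{eq:greenidentitydivdiv} to $\boldsymbol\sigma-\boldsymbol\sigma_I$ with the piecewise polynomial $\tilde v_h$. The volume and normal--normal terms vanish by \eqref{eq:nnDoFs3} and \eqref{eq:nnDoFs1}, because $\nabla^2\tilde v_h\in\mathbb P_{k-1}(T;\mathbb S)$ and $\partial_n\tilde v_h|_F\in\mathbb P_k(F)$.
\item What remains pairs $\tr_2(\boldsymbol\sigma-\boldsymbol\sigma_I)$ with $\tilde v_h-Q_{k-1,F}v_h$, and $\tr_e(\boldsymbol\sigma-\boldsymbol\sigma_I)$ with $\mu_h-\tilde v_h$.
\item For $k=1$ the face part becomes $\sum_F((I-Q_{0,F})\tr_2(\boldsymbol\sigma),[\![v_h]\!])_F$, and Lemma~\ref{lem:CRjump-by-edge} supplies the missing power of $h$.
\end{enumerate}

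For $I_2$, handle $k=1,2$ by direct elementwise integration by parts and weak continuity rather than via $I_h^{\div}$. For example, $I_2=-\sum_F((I-Q_{0,F})\partial_n u,[\![v_h]\!])_F$ when $k=1$.
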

 
 \begin{proof}
 Using \(f=\div\div\boldsymbol{\sigma}-\Delta u\), we decompose
 \(\mathcal R_h(v_h,\mu_h)=I_1+I_2\), where
 \[
 I_1:=b_h(\boldsymbol{\sigma}_I; v_h,\mu_h)
 +(\div\div\boldsymbol{\sigma},\tilde v_h),
 \qquad
 I_2:=-c_h(u,v_h)-(\Delta u,\tilde v_h).
 \]
By the norm equivalence \eqref{eq:normequivalence}, it suffices to prove
\begin{align}
\label{eq:est-I1}
I_1&\lesssim h^{k+1}|\boldsymbol{\sigma}|_{k+1}
 |\!|\!|(v_h,\mu_h)|\!|\!|_{2,\mathrm{CR}}, \\
\label{eq:est-I2}
 I_2& \lesssim
 h^{k+1}|u|_{k+1}|\!|\!|(v_h,\mu_h)|\!|\!|_{2,\mathrm{CR}}.
\end{align}

We first bound \(I_1\). Since
\[
(\div\div\boldsymbol{\sigma}_I,
 \tilde v_h-Q_{k-2,T}v_h)_T
=
(\div\div\boldsymbol{\sigma}_I,
 (I-Q_{k-2,T})v_h^{\mathrm{CR}})_T
=0,
\]
and since \(Q_{k-1,F}v_h\) and \(\mu_h\) are single-valued on
interelement faces and codimension-two subsimplices, respectively, we obtain
\begin{align*}
 I_1
 &=
 \sum_{T\in\mathcal T_h}\Bigl[
  (\div\div(\boldsymbol{\sigma}-\boldsymbol{\sigma}_I), \tilde v_h)_T + (\tr_2(\boldsymbol{\sigma}_I), Q_{k-1,F}v_h)_{\partial T}
  \\
 &\qquad\qquad
 -\sum_{e\in\Delta_{d-2}(T)}
 (\tr_e(\boldsymbol{\sigma}_I), \mu_h)_e
 \Bigr] \\
 &=
 \sum_{T\in\mathcal T_h}\Bigl[
  (\div\div(\boldsymbol{\sigma}-\boldsymbol{\sigma}_I), \tilde v_h)_T - (\tr_2(\boldsymbol{\sigma}-\boldsymbol{\sigma}_I), Q_{k-1,F}v_h)_{\partial T}
  \\
 &\qquad\qquad
 +\sum_{e\in\Delta_{d-2}(T)}
 (\tr_e(\boldsymbol{\sigma}-\boldsymbol{\sigma}_I), \mu_h)_e
 \Bigr].
\end{align*} 
Applying the Green identity \eqref{eq:greenidentitydivdiv} for the
\(\div\div\) operator then yields
\[
I_1=
 \sum_{T\in\mathcal T_h}\Bigl[
 \bigl(\tr_2(\boldsymbol{\sigma}-\boldsymbol{\sigma}_I),
 \tilde v_h-Q_{k-1,F}v_h\bigr)_{\partial T}
 +\sum_{e\in\Delta_{d-2}(T)}
 \bigl(\tr_e(\boldsymbol{\sigma}-\boldsymbol{\sigma}_I),
 \mu_h-\tilde v_h\bigr)_e
 \Bigr].
\]

For \(k\ge2\), using the definition of \(\tilde v_h\) and $Q_{k-1,F}v_h^{\mathrm{CR}}=v_h^{\mathrm{CR}}|_F$, we further have
\[
\begin{aligned}
I_1
&=
 \sum_{T\in\mathcal T_h}\Bigl[
 \bigl(\tr_2(\boldsymbol{\sigma}-\boldsymbol{\sigma}_I),
 (Q_{k-2,T}-Q_{k-1,F})(v_h-v_h^{\mathrm{CR}})\bigr)_{\partial T}
 \\
&\qquad\qquad
 -\sum_{e\in\Delta_{d-2}(T)}
 \bigl(\tr_e(\boldsymbol{\sigma}-\boldsymbol{\sigma}_I),
 Q_{k-2,T}(v_h-v_h^{\mathrm{CR}})
 +v_h^{\mathrm{CR}}-\mu_h\bigr)_e
 \Bigr].
\end{aligned}
\]
The estimate \eqref{eq:est-I1} for \(k\ge2\) then follows from the
Cauchy--Schwarz inequality, standard inverse estimates, and the
approximation property \eqref{eq:intererrorSigma} of \(I_h^{nn}\).

It remains to consider \(k=1\). In this case
\(\tilde v_h=v_h^{\mathrm{CR}}=v_h\). Hence
\[
\begin{aligned}
I_1
&=
 \sum_{T\in\mathcal T_h}\Bigl[
 \bigl(\tr_2(\boldsymbol{\sigma}-\boldsymbol{\sigma}_I),
 v_h-Q_{0,F}v_h\bigr)_{\partial T}
 -\sum_{e\in\Delta_{d-2}(T)}
 \bigl(\tr_e(\boldsymbol{\sigma}-\boldsymbol{\sigma}_I),
 v_h-\mu_h\bigr)_e
 \Bigr]
\\
&=
 \sum_{T\in\mathcal T_h}\Bigl[
 \bigl(\tr_2\boldsymbol{\sigma}
       -Q_{0,F}(\tr_2\boldsymbol{\sigma}),v_h\bigr)_{\partial T}
 -\sum_{e\in\Delta_{d-2}(T)}
 \bigl(\tr_e(\boldsymbol{\sigma}-\boldsymbol{\sigma}_I),
 v_h-\mu_h\bigr)_e
 \Bigr]
\\
&=
 \sum_{F\in\mathcal F_h}
 \bigl(\tr_2\boldsymbol{\sigma}
       -Q_{0,F}(\tr_2\boldsymbol{\sigma}),[\![v_h]\!]\bigr)_F
 -\sum_{T\in\mathcal T_h}\sum_{e\in\Delta_{d-2}(T)}
 \bigl(\tr_e(\boldsymbol{\sigma}-\boldsymbol{\sigma}_I),
 v_h-\mu_h\bigr)_e.
\end{aligned}
\]
Using the Cauchy--Schwarz inequality, \eqref{eq:intererrorSigma}, and estimate
\eqref{eq:CRjump-by-edge-local}, we obtain \eqref{eq:est-I1} for \(k=1\).

We now estimate \(I_2\). Suppose first that \(k\ge3\). Then
\(\tilde v_h=Q_{k-2,h}v_h\). By
\eqref{eq:hdiv-projection-difference} with \(\boldsymbol w=\nabla u\) and
the commuting property \eqref{eq:commutationIhdiv}, we get
\[
\begin{aligned}
 I_2
 &=
 -\sum_{T\in\mathcal T_h}\sum_{F\subset\partial T}
 \bigl((Q_{k-1,T}\nabla u-\nabla u)\cdot\boldsymbol n,
 Q_{k-1,F}v_h-Q_{k-2,T}v_h\bigr)_F
 \\
 &\lesssim
 h^{k+1}|u|_{k+1}
 |\!|\!|(v_h,\mu_h)|\!|\!|_{2,\mathrm{CR}}.
\end{aligned}
\]

For \(k=2\), set \(\delta_h:=v_h-v_h^{\mathrm{CR}}\). Since
\(\tilde v_h=v_h-(I-Q_{0,h})\delta_h\), the \(L^2\)-orthogonality of
\(Q_{1,h}\), together with the weak continuity of \(v_h\), gives
\[
\begin{aligned}
 I_2
 &=
 -(Q_{1,h}\nabla u,\nabla_hv_h)
 -(\Delta u,v_h-(I-Q_{0,h})\delta_h)
 \\
 &=
 ((I-Q_{1,h})\nabla u,\nabla_hv_h)
 +((I-Q_{0,h})\Delta u,\delta_h)
 -\sum_{T\in\mathcal T_h}(\partial_nu,v_h)_{\partial T}
 \\
 &=
 ((I-Q_{1,h})\nabla u,\nabla_h\delta_h)
 +((I-Q_{0,h})\Delta u,\delta_h)
 -\sum_{T\in\mathcal T_h}
 ((I-Q_{1,F})\partial_nu,\delta_h)_{\partial T}
 \\
 &\lesssim
 h^3|u|_3
 |\!|\!|(v_h,\mu_h)|\!|\!|_{2,\mathrm{CR}}.
\end{aligned}
\]

Finally, for \(k=1\), we have \(\tilde v_h=v_h\), and \(v_h\) is
piecewise affine. Therefore, by an elementwise integration by parts and \eqref{eq:CRjump-by-edge-local},
\[
\begin{aligned}
 I_2
 &=
 -(\nabla u,\nabla_hv_h)-(\Delta u,v_h)
 =
 -\sum_{F\in\mathcal F_h}
 \bigl((I-Q_{0,F})\partial_nu,[\![v_h]\!]\bigr)_F
 \\
 &\lesssim
 h^2|u|_2
 |\!|\!|(v_h,\mu_h)|\!|\!|_{2,\mathrm{CR}}.
\end{aligned}
\]
Combining the preceding three estimates yields \eqref{eq:est-I2}. The proof is complete.
\end{proof}
 
For the discrete solution
\((\boldsymbol{\sigma}_h,u_h,\lambda_h)\) of
\eqref{eq:fourth-order-discrete-mixed}, define the errors
\[
e_{\boldsymbol{\sigma}}:=\boldsymbol{\sigma}_I-\boldsymbol{\sigma}_h,
\qquad
e_u:=I_h^{\mathrm{VE}}u-u_h,
\qquad
e_\lambda:=Q_{k,\mathcal E_h}\lambda-\lambda_h .
\]

 \begin{theorem}\label{thm:errorestimate}
 Let \((\boldsymbol{\sigma}, u) \in H^{-1}(\div\div, \Omega; \mathbb{S}) \times
 H_0^1(\Omega)\) and
 \((\boldsymbol{\sigma}_h,u_h,\lambda_h)\in \Sigma_{k,h}^{nn} \times
 \mathring{V}_{k,h}^{\mathrm{VE}}\times\mathbb{P}_{k}(\mathring{\mathcal{E}}_h)\)
 be the solutions of \eqref{eq:mixform} and
 \eqref{eq:fourth-order-discrete-mixed}, respectively. Assume
 \(u \in H^{k+3}(\Omega)\). Then, for \(k\ge1\),
 \begin{equation}\label{eq:errorestimate}
 \varepsilon^{-1}\|\boldsymbol{\sigma} - \boldsymbol{\sigma}_h\|_{0}
 + |\! |\!|(I_h^{\mathrm{VE}}u-u_h,Q_{k,\mathcal{E}_h}\lambda-\lambda_h)|\!|\!|_{\varepsilon,h}
 \lesssim
 h^{k+1}(\varepsilon \| u \|_{k+3} + \varepsilon^{-1}|u|_{k+1}).
 \end{equation}
 \end{theorem}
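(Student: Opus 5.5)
We derive an error equation for the discrete errors \((e_{\boldsymbol\sigma},e_u,e_\lambda)\), establish a discrete stability (inf-sup) estimate in the norm \(\varepsilon^{-1}\|\cdot\|_0+|\!|\!|\cdot|\!|\!|_{\varepsilon,h}\), and bound the consistency terms. Since \(u\in H^{k+3}\) implies \(u\in H_0^2(\Omega)\), Lemma~\ref{lem:Consistency-mixed1} applies. Subtracting \eqref{eq:fourth-order-discrete-mixed-a} from it gives, for all \(\boldsymbol\tau_h\in\Sigma_{k,h}^{nn}\),
\[
a(e_{\boldsymbol\sigma},\boldsymbol\tau_h)+b_h(\boldsymbol\tau_h;e_u,e_\lambda)=a(\boldsymbol\sigma_I-\boldsymbol\sigma,\boldsymbol\tau_h).
\]
Using \(c_h(I_h^{\mathrm{VE}}u,v_h)=c_h(u,v_h)\), which follows from \eqref{eq:vem-hdiv-commuting}, and subtracting \eqref{eq:fourth-order-discrete-mixed-b}, we get
\[
b_h(e_{\boldsymbol\sigma};v_h,\mu_h)-c_h(e_u,v_h)=\mathcal R_h(v_h,\mu_h).
\]
The right-hand side of the first equation is bounded by \(\varepsilon^{-1}\|\boldsymbol\sigma-\boldsymbol\sigma_I\|_0\,\varepsilon^{-1}\|\boldsymbol\tau_h\|_0\). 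By \eqref{eq:intererrorSigma} with \(\boldsymbol\sigma=\varepsilon^2\nabla^2u\),
\[
\varepsilon^{-1}\|\boldsymbol\sigma-\boldsymbol\sigma_I\|_0\lesssim h^{k+1}\varepsilon|u|_{k+3}.
\]

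For stability, we test in the standard saddle-point way. Take \(\boldsymbol\tau_h=e_{\boldsymbol\sigma}\) and \((v_h,\mu_h)=(e_u,e_\lambda)\), then subtract the two equations. This controls \(\varepsilon^{-2}\|e_{\boldsymbol\sigma}\|_0^2+\|Q_{k-1,h}\nabla_h e_u\|_0^2\), and by \eqref{eq:vem-grad-projection-equivalence} the second term controls \(|e_u|_{1,h}^2\). To recover \(\varepsilon^2|\!|\!|(e_u,e_\lambda)|\!|\!|_{2,h}^2\), we choose \(\boldsymbol\tau_h=-\varepsilon^2\nabla_w^2(e_u,e_\lambda)\) in the first error equation. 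Since \(b_h(\boldsymbol\tau;v,\mu)=-(\boldsymbol\tau,\nabla_w^2(v,\mu))\), this gives
\[
\varepsilon^2\|\nabla_w^2(e_u,e_\lambda)\|_0^2\lesssim \varepsilon^{-1}\|e_{\boldsymbol\sigma}\|_0\,\varepsilon\|\nabla_w^2(e_u,e_\lambda)\|_0+(\text{data term}),
\]
and Lemma~\ref{lem:projected-gradient-H2-equivalence} converts \(\|\nabla_w^2\cdot\|_0\) into \(|\!|\!|\cdot|\!|\!|_{2,h}\). Equivalently, we can verify Zulehner-type conditions at the discrete level, exactly as in Theorem~\ref{thm:mix-well-posed}. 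There, the inf-sup of \(b_h\) against \(\|\boldsymbol\tau\|_{\varepsilon^{-1}L^2}\) holds with constant \(\varepsilon|\!|\!|\cdot|\!|\!|_{2,h}\), because \(\nabla_w^2(v,\mu)\in\Sigma_{k,h}^{nn}\) is itself an admissible test tensor. Combining the two tests yields
\[
\varepsilon^{-1}\|e_{\boldsymbol\sigma}\|_0+|\!|\!|(e_u,e_\lambda)|\!|\!|_{\varepsilon,h}
\lesssim \varepsilon^{-1}\|\boldsymbol\sigma-\boldsymbol\sigma_I\|_0+\sup_{(v_h,\mu_h)}\frac{\mathcal R_h(v_h,\mu_h)}{|\!|\!|(v_h,\mu_h)|\!|\!|_{\varepsilon,h}}.
\]

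The main obstacle is the residual \(\mathcal R_h\) measured in the \(\varepsilon\)-weighted dual norm. Lemma~\ref{lem:Compatibilityerror} gives
\[
|\mathcal R_h|\lesssim h^{k+1}(|\boldsymbol\sigma|_{k+1}+|u|_{k+1})\,|\!|\!|(v_h,\mu_h)|\!|\!|_{2,h}.
\]
Since \(|\!|\!|\cdot|\!|\!|_{2,h}\le\varepsilon^{-1}|\!|\!|\cdot|\!|\!|_{\varepsilon,h}\), this yields the contribution
\[
\varepsilon^{-1}h^{k+1}\bigl(\varepsilon^2|u|_{k+3}+|u|_{k+1}\bigr)=h^{k+1}\bigl(\varepsilon|u|_{k+3}+\varepsilon^{-1}|u|_{k+1}\bigr),
\]
which matches the statement exactly. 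Note that no \(|v_h|_{1,h}\)-based bound is needed for this non-robust estimate. Finally, the triangle inequality \(\|\boldsymbol\sigma-\boldsymbol\sigma_h\|_0\le\|\boldsymbol\sigma-\boldsymbol\sigma_I\|_0+\|e_{\boldsymbol\sigma}\|_0\) gives \eqref{eq:errorestimate}. The step needing the most care is the coupled stability argument: we must make sure the choice \(\boldsymbol\tau_h=-\varepsilon^2\nabla_w^2(e_u,e_\lambda)\) is admissible, i.e. lies in \(\Sigma_{k,h}^{nn}\), which holds by the definition of \(\nabla_w^2\). We must also check that Young's inequality absorbs the cross terms with constants independent of \(\varepsilon\).
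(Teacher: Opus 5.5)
Your proposal is correct and follows essentially the same route as the paper: the same two error equations, the bound $\varepsilon\|\nabla_w^2(e_u,e_\lambda)\|_0\lesssim\varepsilon^{-1}\|e_{\boldsymbol\sigma}\|_0+\varepsilon^{-1}\|\boldsymbol\sigma-\boldsymbol\sigma_I\|_0$ from the first error equation (the paper writes this as a supremum over $\boldsymbol\tau_h$, which is attained by your test tensor), Lemma~\ref{lem:Compatibilityerror} for $\mathcal R_h$, and the energy identity closed with Young's inequality. One small correction: $u\in H_0^2(\Omega)$ does not follow from $u\in H^{k+3}(\Omega)$; it follows from Lemma~\ref{lem:equivalence}, which identifies $u$ with the primal solution in $H_0^2(\Omega)$.
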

 
 \begin{proof}
 By \eqref{eq:Consistency-mixed1}, the commutative property \eqref{eq:weakdivdivinterproperty} and
 the interpolation property \eqref{eq:vem-hdiv-commuting} of \(I_h^{\mathrm{VE}}\), we obtain the following error equations from \eqref{eq:fourth-order-discrete-mixed}:
\begin{subequations}\label{eq:error}
\begin{align}
 a(e_{\boldsymbol{\sigma}},\boldsymbol{\tau}_h)
 + b_h(\boldsymbol{\tau}_h; e_u,e_\lambda)
 &=
 a(\boldsymbol{\sigma}_I-\boldsymbol{\sigma},\boldsymbol{\tau}_h),
 && \forall\, \boldsymbol{\tau}_h\in\Sigma_{k,h}^{nn},
 \label{errorequation1}
 \\
 b_h(e_{\boldsymbol{\sigma}}; v_h,\mu_h)
 - c_h(e_u,v_h)
 &=
 \mathcal R_h(v_h,\mu_h),
 && \forall\, (v_h,\mu_h)\in
 \mathring V_{k,h}^{\mathrm{VE}}\times
 \mathbb P_k(\mathring{\mathcal E}_h).
 \label{errorequation2}
\end{align}
\end{subequations}

By the norm equivalence \eqref{eq:normequivalence} and error equation \eqref{errorequation1},
\begin{equation}\label{eq:20260430-1}
\begin{aligned}
|\! |\!|(e_u,e_\lambda)|\!|\!|_{2,h} &\eqsim \|\nabla_w^{2} (e_u,e_\lambda)\|_{0} =\sup_{\boldsymbol{\tau}_h\in \Sigma_{k,h}^{nn}} \frac{b_h(\boldsymbol{\tau}_h; e_u,e_\lambda)}{\|\boldsymbol{\tau}_h\|_{0}} \\
&\lesssim \varepsilon^{-2}\|e_{\boldsymbol{\sigma}}\|_0 + \varepsilon^{-2}\|\boldsymbol{\sigma}_I-\boldsymbol{\sigma}\|_0.
\end{aligned}
\end{equation}
This combined with \eqref{eq:Compatibilityerror} gives
 \begin{equation}\label{eq:Rh-bound}
|\mathcal R_h(e_u,e_\lambda)|
 \lesssim
 h^{k+1}\bigl(|\boldsymbol{\sigma}|_{k+1}+|u|_{k+1}\bigr)\,(\varepsilon^{-2}\|e_{\boldsymbol{\sigma}}\|_0 + \varepsilon^{-2}\|\boldsymbol{\sigma}_I-\boldsymbol{\sigma}\|_0).
 \end{equation}

Taking \(\boldsymbol\tau_h=e_{\boldsymbol\sigma}\) in
\eqref{errorequation1} and
\((v_h,\mu_h)=(e_u,e_\lambda)\) in \eqref{errorequation2}, and then
subtracting the two identities, we obtain
\begin{equation*}
 \varepsilon^{-2}\|e_{\boldsymbol{\sigma}}\|_0^2
 + \|Q_{k-1,h}\nabla_h e_u\|_0^2
 =
 \varepsilon^{-2}(\boldsymbol{\sigma}_I-\boldsymbol{\sigma},
 e_{\boldsymbol{\sigma}})
 - \mathcal R_h(e_u,e_\lambda).
\end{equation*}
Using the Cauchy--Schwarz inequality, \eqref{eq:Rh-bound}, and the
interpolation estimate \eqref{eq:intererrorSigma}, we infer that
\[
\varepsilon^{-1}\|e_{\boldsymbol{\sigma}}\|_0 + \|Q_{k-1,h}\nabla_h e_u\|_0\lesssim \varepsilon^{-1}h^{k+1}(|\boldsymbol{\sigma}|_{k+1}+|u|_{k+1}).
\]
This together with \eqref{eq:20260430-1} and \eqref{eq:intererrorSigma} yields
\[
\varepsilon^{-1}\|e_{\boldsymbol{\sigma}}\|_0 + |\! |\!|(e_u,e_\lambda)|\!|\!|_{\varepsilon,h}\lesssim \varepsilon^{-1}h^{k+1}(|\boldsymbol{\sigma}|_{k+1}+|u|_{k+1}).
\]
Finally, we conclude the desired estimate \eqref{eq:errorestimate} by using the triangle inequality and \eqref{eq:intererrorSigma}.
\end{proof}
 
 We next derive a parameter-robust error estimate for the distributional method
 \eqref{eq:fourth-order-discrete-mixed}.
 \begin{theorem}\label{thm:robusterror}
Let \((\boldsymbol{\sigma}, u) \in H^{-1}(\div\div, \Omega; \mathbb{S}) \times
 H^2(\Omega)\) and
 \((\boldsymbol{\sigma}_h,u_h,\lambda_h)\in \Sigma_{k,h}^{nn} \times
 \mathring{V}_{k,h}^{\mathrm{VE}}\times\mathbb{P}_{k}(\mathring{\mathcal{E}}_h)\)
 be the solutions of \eqref{eq:mixform} and
 \eqref{eq:fourth-order-discrete-mixed}, respectively.
	Assume that regularities \eqref{eq:regularity-assumption} and
	\eqref{eq:perturbation-regularity} hold with \(s=k+1\). Then, for
	\(k\ge1\),
	\begin{align}
		\varepsilon^{-1}\|\boldsymbol{\sigma}-\boldsymbol{\sigma}_h\|_{0}
		+ |\!|\!|(u-u_h,\lambda-\lambda_h)|\!|\!|_{\varepsilon,h}
		&\lesssim \varepsilon^{1/2}\|f\|_0 + h^k\|f\|_{k-1},
		\label{eq:robusterror1} \\
		\varepsilon
		\left|Q_{k-1,h}\nabla_h(\bar u-u_h)\right|_{1,h}
		+ |\bar u-u_h|_{1,h}
		&\lesssim \varepsilon^{1/2}\|f\|_0 + h^k\|f\|_{k-1}.
		\label{eq:robusterror2}
	\end{align}
\end{theorem}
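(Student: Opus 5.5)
Our plan is to rerun the energy argument of Theorem~\ref{thm:errorestimate} with the same error equations \eqref{eq:error}, but to estimate the consistency terms in a parameter-robust manner. The regular part $\bar u$ supplies the $h^k$ accuracy, and the layer part $u-\bar u$ is controlled by \eqref{eq:perturbation-regularity}. Hence no term may be multiplied by $\varepsilon^{-1}h^{k+1}|u|_{k+1}$; every bound must be of the form $\varepsilon^{1/2}\|f\|_0+h^k\|f\|_{k-1}$.

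\textbf{Step 1: energy identity.} With $e_{\boldsymbol\sigma},e_u,e_\lambda$ as before, we test \eqref{errorequation1} with $e_{\boldsymbol\sigma}$ and \eqref{errorequation2} with $(e_u,e_\lambda)$, and subtract. This gives
\[
\varepsilon^{-2}\|e_{\boldsymbol\sigma}\|_0^2+\|Q_{k-1,h}\nabla_h e_u\|_0^2=\varepsilon^{-2}(\boldsymbol\sigma_I-\boldsymbol\sigma,e_{\boldsymbol\sigma})-\mathcal R_h(e_u,e_\lambda).
\]
From \eqref{errorequation1}, \eqref{eq:normequivalence} gives $\varepsilon|\!|\!|(e_u,e_\lambda)|\!|\!|_{2,h}\lesssim\varepsilon^{-1}\|e_{\boldsymbol\sigma}\|_0+\varepsilon^{-1}\|\boldsymbol\sigma-\boldsymbol\sigma_I\|_0$. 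For the $L^2$ interpolation of $\boldsymbol\sigma=\varepsilon^2\nabla^2u$ we use only low-order bounds:
\[
\varepsilon^{-1}\|\boldsymbol\sigma-\boldsymbol\sigma_I\|_0\lesssim\min\{\varepsilon|u|_2,\ \varepsilon h|u|_3\}\lesssim\varepsilon^{1/2}\|f\|_0 .
\]
Here we take $s=0$ or $1$ in a stable version of \eqref{eq:intererrorSigma}, which uses the face moments of $\boldsymbol\sigma\in H^1$.

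\textbf{Step 2: splitting the residual.} This is the main step. We write $f=-\Delta\bar u$, so that
\[
\mathcal R_h(v_h,\mu_h)=b_h(\boldsymbol\sigma_I;v_h,\mu_h)-c_h(u,v_h)-(\Delta\bar u,\tilde v_h).
\]
We regroup this as
\[
\bigl[b_h(\boldsymbol\sigma_I;v_h,\mu_h)+(\div\div\boldsymbol\sigma,\tilde v_h)\bigr]-\bigl[c_h(u,v_h)+(\Delta u,\tilde v_h)\bigr],
\]
using $\div\div\boldsymbol\sigma=f+\Delta u$. The first bracket is the term $I_1$ from Lemma~\ref{lem:Compatibilityerror}. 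It is bounded by face and edge traces of $\boldsymbol\sigma-\boldsymbol\sigma_I$ against $|\!|\!|(v_h,\mu_h)|\!|\!|_{2,\mathrm{CR}}$, but now only at first order: $I_1\lesssim h\,|\boldsymbol\sigma|_{2}\,|\!|\!|\cdot|\!|\!|_{2,h}$, or a $\min$ with $\|\boldsymbol\sigma\|$-type bounds. Combined with the factor $\varepsilon^{-1}$ coming from Step 1, we aim for
\[
I_1\lesssim\varepsilon\,\|\nabla^2 u\|_{1}\cdot\varepsilon|\!|\!|\cdot|\!|\!|_{2,h},
\qquad\text{that is,}\qquad
\varepsilon^{-1}\|\boldsymbol\sigma\|_{\text{trace}}\lesssim\varepsilon|u|_3\lesssim\varepsilon^{-1/2}\|f\|_0 .
\]
This needs care: we intend to interpolate between the bound $\varepsilon^{2}h|u|_3$ and a zero-order bound, so that the result is $\lesssim\varepsilon^{1/2}\|f\|_0\,\varepsilon|\!|\!|\cdot|\!|\!|_{2,h}$.

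For the second bracket we split $u=\bar u+(u-\bar u)$. The $\bar u$ part is handled as $I_2$ in Lemma~\ref{lem:Compatibilityerror}, with one power of $h$ traded for $|v_h|_{1,h}$ instead of $|\!|\!|\cdot|\!|\!|_{2,h}$. This gives
\[
\lesssim h^k|\bar u|_{k+1}\,|v_h|_{1,h}\lesssim h^k\|f\|_{k-1}|v_h|_{1,h}
\]
by \eqref{eq:regularity-assumption}. The $u-\bar u$ part is bounded directly by $\lesssim|u-\bar u|_1|v_h|_{1,h}$ plus face terms. The face terms carry $\partial_n(u-\bar u)$; we control them through the $H^1$-type $\bar u$ bound together with $\varepsilon|u|_2\lesssim\varepsilon^{1/2}\|f\|_0$.

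\textbf{Step 3: closing the estimates.} Young's inequality absorbs the $e$-terms, giving
\[
\varepsilon^{-1}\|e_{\boldsymbol\sigma}\|_0+|\!|\!|(e_u,e_\lambda)|\!|\!|_{\varepsilon,h}\lesssim\varepsilon^{1/2}\|f\|_0+h^k\|f\|_{k-1}.
\]
We use \eqref{eq:vem-grad-projection-equivalence} to pass from $Q_{k-1,h}\nabla_h e_u$ to $|e_u|_{1,h}$. The interpolation errors $u-I_h^{\mathrm{VE}}u$ and $\lambda-Q_{k,\mathcal E_h}\lambda$ in the $|\!|\!|\cdot|\!|\!|_{\varepsilon,h}$ norm are bounded by $\varepsilon|u|_2+h^k|\bar u|_{k+1}+|u-\bar u|_1$ via \eqref{eq:intererrorVE}, and the triangle inequality then yields \eqref{eq:robusterror1}. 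For \eqref{eq:robusterror2} we write $\bar u-u_h=(\bar u-u)+(u-u_h)$ and use $|u-\bar u|_1\lesssim\varepsilon^{1/2}\|f\|_0$. For the first term of \eqref{eq:robusterror2} we need
\[
\varepsilon|Q_{k-1,h}\nabla_h(\bar u-u)|_{1,h}\lesssim\varepsilon(|\bar u|_2+|u|_2)\lesssim\varepsilon\|f\|_0+\varepsilon^{1/2}\|f\|_0 .
\]

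The main obstacle is bounding the $\boldsymbol\sigma$-trace residual $I_1$ robustly, since $|\boldsymbol\sigma|_{k+1}$ blows up in the boundary layers. I expect to rely only on $\varepsilon^2|u|_3\lesssim\varepsilon^{1/2}\|f\|_0$ together with a first-order trace estimate.
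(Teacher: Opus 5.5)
There is a genuine gap, and it sits exactly at the obstacle you flagged. Your argument is built on $e_{\boldsymbol\sigma}=\boldsymbol\sigma_I-\boldsymbol\sigma_h$ and the residual $\mathcal R_h$, but $\boldsymbol\sigma_I=I_h^{nn}\boldsymbol\sigma$ is not a robust comparison tensor.

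\textbf{Why the approach fails.} There is no $L^2$-stable ($s=0$) version of \eqref{eq:intererrorSigma}, because $I_h^{nn}$ is defined through face moments of $\boldsymbol\sigma_{nn}$. The $s=1$ bound only yields $\varepsilon^{-1}\|\boldsymbol\sigma-\boldsymbol\sigma_I\|_0\lesssim\varepsilon h|u|_3\lesssim (h/\varepsilon)\,\varepsilon^{1/2}\|f\|_0$, which is useless when $h\gg\varepsilon$.

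This is not merely a weak estimate. For a generic boundary layer, $\boldsymbol\sigma_{nn}|_{\partial\Omega}\approx-\varepsilon\,\partial_n\bar u$. The face DoFs force $(\boldsymbol\sigma_I)_{nn}|_F=Q_{k,F}\boldsymbol\sigma_{nn}$ on boundary faces, and the inverse trace inequality then gives $\varepsilon^{-1}\|\boldsymbol\sigma_I\|_0\gtrsim h^{1/2}\|\partial_n\bar u\|_{0,\partial\Omega}$ on quasi-uniform meshes. In contrast, $\varepsilon^{-1}\|\boldsymbol\sigma\|_0\lesssim\varepsilon^{1/2}\|f\|_0$. Hence $\varepsilon^{-1}\|\boldsymbol\sigma-\boldsymbol\sigma_I\|_0\sim h^{1/2}$ for $\varepsilon\ll h$. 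Your Step~1 bound therefore fails. Your Step~3 bound on $\varepsilon^{-1}\|e_{\boldsymbol\sigma}\|_0$ is actually false in that regime, since together with \eqref{eq:robusterror1} it would bound $\varepsilon^{-1}\|\boldsymbol\sigma-\boldsymbol\sigma_I\|_0$ by $\varepsilon^{1/2}+h^k$.

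The term $I_1$ inherits the same defect. It contains $\tr_2(\boldsymbol\sigma-\boldsymbol\sigma_I)$ on faces, which needs $\varepsilon^2|u|_4$-type control that \eqref{eq:perturbation-regularity} does not provide. Even your own target $\varepsilon|u|_3\lesssim\varepsilon^{-1/2}\|f\|_0$ blows up as $\varepsilon\to0$. In addition, regrouping with $\div\div\boldsymbol\sigma=f+\Delta u$ brings in $(\Delta(u-\bar u),\tilde v_h)$. Integrating this by parts leaves face traces of $\partial_n(u-\bar u)$ whose contribution is only $O(\varepsilon^{1/2}+h^{1/2})$.

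\textbf{The missing idea.} Avoid $\boldsymbol\sigma_I$, $\mathcal R_h$ and $\div\div\boldsymbol\sigma$ altogether, and show that $\boldsymbol\sigma$ and $\boldsymbol\sigma_h$ are each small. Subtracting \eqref{eq:fourth-order-discrete-mixed-a} from \eqref{eq:Consistency-mixed1} gives $b_h(\boldsymbol\tau_h;e_u,e_\lambda)=\varepsilon^{-2}(\boldsymbol\sigma_h-\boldsymbol\sigma,\boldsymbol\tau_h)$. Test this with $\boldsymbol\tau_h=\boldsymbol\sigma_h$, and test \eqref{eq:fourth-order-discrete-mixed-b} with $(e_u,e_\lambda)$. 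Using \eqref{eq:vem-hdiv-commuting} and $f=-\Delta\bar u$, with $\tilde e_u:=Q_{k-2,h}e_u+(I-Q_{k-2,h})e_u^{\mathrm{CR}}$, you obtain
\[
\varepsilon^{-2}\|\boldsymbol\sigma_h\|_0^2+\|Q_{k-1,h}\nabla_he_u\|_0^2
=\varepsilon^{-2}(\boldsymbol\sigma,\boldsymbol\sigma_h)+c_h(u-\bar u,e_u)+\bigl[c_h(\bar u,e_u)+(\Delta\bar u,\tilde e_u)\bigr].
\]
The three terms are handled as follows:
\begin{itemize}
\item The first term is bounded by $\varepsilon|u|_2\,\varepsilon^{-1}\|\boldsymbol\sigma_h\|_0$.
\item The second term is bounded by $|u-\bar u|_1|e_u|_{1,h}$, with no integration by parts needed.
\item The bracket involves only the smooth $\bar u$. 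The $I_2$ argument of Lemma~\ref{lem:Compatibilityerror}, with one power of $h$ traded for $|e_u|_{1,h}$, gives $h^k|\bar u|_{k+1}|e_u|_{1,h}$.
\end{itemize}
Next, \eqref{eq:normequivalence} yields $\varepsilon|\!|\!|(e_u,e_\lambda)|\!|\!|_{2,h}\lesssim\varepsilon^{-1}(\|\boldsymbol\sigma_h\|_0+\|\boldsymbol\sigma\|_0)$. You then conclude with $\varepsilon^{-1}\|\boldsymbol\sigma-\boldsymbol\sigma_h\|_0\le\varepsilon^{-1}\|\boldsymbol\sigma\|_0+\varepsilon^{-1}\|\boldsymbol\sigma_h\|_0$.

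Your treatment of the interpolation error and of \eqref{eq:robusterror2} can stay as it is. In fact, the $|\!|\!|\cdot|\!|\!|_{2,h}$ part of the interpolation error vanishes identically.
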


\begin{proof}
By \eqref{eq:vem-hdiv-commuting} and
\(I_{1,h}^{\mathrm{VE}}(u-I_h^{\mathrm{VE}}u)=0\), we have
	\[
	|\!|\!|(u-I_h^{\mathrm{VE}}u,\lambda-Q_{k,\mathcal E_h}\lambda)
	|\!|\!|_{\varepsilon,h}
	=
	|u-I_h^{\mathrm{VE}}u|_{1,h}.
	\]
	Using the triangle inequality, the interpolation estimate \eqref{eq:intererrorVE} and the regularity
	assumptions \eqref{eq:regularity-assumption}-\eqref{eq:perturbation-regularity}, we obtain
	\[
	\begin{aligned}
		|u-I_h^{\mathrm{VE}}u|_{1,h}
		&\le
		|(u-\bar u)-I_h^{\mathrm{VE}}(u-\bar u)|_{1,h}
		+ |\bar u-I_h^{\mathrm{VE}}\bar u|_{1,h} \\
		&\lesssim
		|u-\bar u|_1+h^k|\bar u|_{k+1}
		\lesssim
		\varepsilon^{1/2}\|f\|_0+h^k\|f\|_{k-1}.
	\end{aligned}
	\]
	Thus, by the fact $\varepsilon^{-1}\|\boldsymbol{\sigma}\|_0 = \varepsilon |u|_2 \lesssim \varepsilon^{1/2}\|f\|_0$,
	\begin{equation}\label{eq:robust-pre-est}
		\varepsilon^{-1}\|\boldsymbol{\sigma}\|_0
		+
		|\!|\!|(u-I_h^{\mathrm{VE}}u,\lambda-Q_{k,\mathcal E_h}\lambda)
		|\!|\!|_{\varepsilon,h}
		\lesssim
		\varepsilon^{1/2}\|f\|_0+h^k\|f\|_{k-1}.
	\end{equation}
	
	By error equation \eqref{errorequation1}, we have
	\begin{equation}\label{eq:robust-first-equation}
		-(\boldsymbol{\tau}_h,\nabla_w^{2}(e_u,e_\lambda))=b_h(\boldsymbol{\tau}_h; e_u,e_\lambda)
		=
		\varepsilon^{-2}
		(\boldsymbol{\sigma}_h-\boldsymbol{\sigma},\boldsymbol{\tau}_h),
		\qquad
		\forall\,\boldsymbol{\tau}_h\in\Sigma_{k,h}^{nn}.
	\end{equation}
	Therefore, by the norm equivalence \eqref{eq:normequivalence} and \eqref{eq:robust-first-equation},
	\begin{equation}\label{eq:robust-H2-control}
		|\!|\!|(e_u,e_\lambda)|\!|\!|_{2,h}
		\eqsim \|\nabla_w^{2}(e_u,e_\lambda)\|_{0}\leq
		\varepsilon^{-2}
		\bigl(\|\boldsymbol{\sigma}_h\|_0
		+\|\boldsymbol{\sigma}\|_0\bigr).
	\end{equation}
	Let \(\tilde e_u:=Q_{k-2,h}e_u+(I-Q_{k-2,h})e_u^{\mathrm{CR}}\). Taking
	\(\boldsymbol{\tau}_h=\boldsymbol{\sigma}_h\) in
	\eqref{eq:robust-first-equation} and
	\((v_h,\mu_h)=(e_u,e_\lambda)\) in
	\eqref{eq:fourth-order-discrete-mixed-b}, and using
	\(u_h=I_h^{\mathrm{VE}}u-e_u\) together with
\eqref{eq:vem-hdiv-commuting}, we obtain
	\begin{equation}\label{eq:robust-energy}
		\varepsilon^{-2}\|\boldsymbol{\sigma}_h\|_0^2
		+\|Q_{k-1,h}\nabla_h e_u\|_0^2
		=
		\varepsilon^{-2}(\boldsymbol{\sigma},\boldsymbol{\sigma}_h)
		+c_h(u,e_u)
		-(f,\tilde e_u).
	\end{equation}
	The first term on the right-hand side is bounded by
	\[
	\varepsilon^{-2}(\boldsymbol{\sigma},\boldsymbol{\sigma}_h)
	\lesssim
	\varepsilon^{1/2}\|f\|_0\,
	\varepsilon^{-1}\|\boldsymbol{\sigma}_h\|_0.
	\]
Furthermore, using \(-\Delta\bar u=f\), the regularity estimate
\eqref{eq:perturbation-regularity}, and the same argument as in the
estimate of \(I_2\) in Lemma~\ref{lem:Compatibilityerror}, we get
	\[
	\begin{aligned}
		c_h(u,e_u)-(f,\tilde e_u)
		&=
		c_h(u-\bar u,e_u)
		+c_h(\bar u,e_u)
		+(\Delta\bar u,\tilde e_u)  \\
		&\lesssim
		\bigl(\varepsilon^{1/2}\|f\|_0+h^k\|f\|_{k-1}\bigr)
		|e_u|_{1,h}.
	\end{aligned}
	\]
	Substituting the last two estimates into \eqref{eq:robust-energy} and using the norm equivalence
	\eqref{eq:vem-grad-projection-equivalence}, we obtain
	\[
	\varepsilon^{-1}\|\boldsymbol{\sigma}_h\|_0
	+|e_u|_{1,h}
	\lesssim
	\varepsilon^{1/2}\|f\|_0+h^k\|f\|_{k-1}.
	\]
	Together with \eqref{eq:robust-H2-control} and
	\eqref{eq:robust-pre-est}, this gives
	\[
	\varepsilon|\!|\!|(e_u,e_\lambda)|\!|\!|_{2,h}
	\lesssim
	\varepsilon^{1/2}\|f\|_0+h^k\|f\|_{k-1}.
	\]
	Hence
	\[
	\varepsilon^{-1}\|\boldsymbol{\sigma}_h\|_0
	+
	|\!|\!|(e_u,e_\lambda)|\!|\!|_{\varepsilon,h}
	\lesssim
	\varepsilon^{1/2}\|f\|_0+h^k\|f\|_{k-1}.
	\]
	Therefore, \eqref{eq:robusterror1} follows from the triangle inequality and
	\eqref{eq:robust-pre-est}.
	
	It remains to prove \eqref{eq:robusterror2}. Since
	\(\bar u-u_h=e_u+\bar u-I_h^{\mathrm{VE}}u\), the triangle inequality and \eqref{eq:vem-hdiv-commuting} give
	\[
	\begin{aligned}
		&\varepsilon
		\left|Q_{k-1,h}\nabla_h(\bar u-u_h)\right|_{1,h}
		+|\bar u-u_h|_{1,h} \\
		&\quad\lesssim
		|\!|\!|(e_u,e_\lambda)|\!|\!|_{\varepsilon,h}
		+\varepsilon
		\left|Q_{k-1,h}\nabla(\bar u-u)\right|_{1,h}
		+|\bar u-I_h^{\mathrm{VE}}u|_{1,h}.
	\end{aligned}
	\]
	Using the stability of \(Q_{k-1,h}\), the interpolation estimate \eqref{eq:intererrorVE} and the regularity assumptions
	\eqref{eq:regularity-assumption}-\eqref{eq:perturbation-regularity}, we obtain
	\[
	\begin{aligned}
		&\varepsilon
		\left|Q_{k-1,h}\nabla_h(\bar u-u_h)\right|_{1,h}
		+|\bar u-u_h|_{1,h} \\
		&\quad\lesssim
		|\!|\!|(e_u,e_\lambda)|\!|\!|_{\varepsilon,h}
		+\varepsilon\bigl(|u|_2+|\bar u|_2\bigr)
		+|u-\bar u|_1+h^k|\bar u|_{k+1} \\
		&\quad\lesssim
		\varepsilon^{1/2}\|f\|_0+h^k\|f\|_{k-1}.
	\end{aligned}
	\]
This proves \eqref{eq:robusterror2} and completes the proof.
\end{proof}

\begin{remark}\rm
When \(\varepsilon\eqsim 1\), the estimate
$
|\!|\!|(I_h^{\mathrm{VE}}u-u_h,
Q_{k,\mathcal E_h}\lambda-\lambda_h)|\!|\!|_{\varepsilon,h}
=O(h^{k+1})
$
in \eqref{eq:errorestimate} is superconvergent, whereas
$
\|\boldsymbol\sigma-\boldsymbol\sigma_h\|_0=O(h^{k+1})
$
is of optimal order. In the singularly perturbed regime
\(\varepsilon\to0\), the estimates
\eqref{eq:robusterror1}--\eqref{eq:robusterror2} are both optimal and
robust with respect to \(\varepsilon\).
\end{remark}


\section{Hybridization and Equivalent Formulations}\label{sec:hybridization}
This section hybridizes the normal--normal continuity of the tensor variable in \eqref{eq:fourth-order-discrete-mixed}. After local elimination of the stress variable, the hybridized method yields a stabilization-free weak Galerkin formulation. We then prove that this weak Galerkin formulation is equivalent to an \(H^2\)-nonconforming virtual element method.

\subsection{Hybridized normal--normal continuity}
To hybridize the normal--normal continuity of the stress variable, we
introduce the multiplier space \(\mathbb P_k(\mathring{\mathcal F}_h)\)
and seek the stress variable in the broken space
\[
\Sigma_{k,h}^{-1}:=\mathbb P_k(\mathcal T_h;\mathbb S).
\]
The multiplier imposes normal--normal continuity weakly.
The hybridized mixed formulation of
\eqref{eq:fourth-order-discrete-mixed} reads as follows: find
$
(\boldsymbol\sigma_h,u_h,\gamma_h,\lambda_h)
\in
\Sigma_{k,h}^{-1}\times
\mathring V_{k,h}^{\mathrm{VE}}\times
\mathbb P_k(\mathring{\mathcal F}_h)\times
\mathbb P_k(\mathring{\mathcal E}_h)
$
such that
\begin{subequations}\label{eq:hybridizationmix}
\begin{align}
a(\boldsymbol\sigma_h,\boldsymbol\tau_h)
+
b_h^{\mathrm{hyb}}(\boldsymbol\tau_h;u_h,\gamma_h,\lambda_h)
&=0,
\label{eq:hybridizationmix1}\\
b_h^{\mathrm{hyb}}(\boldsymbol\sigma_h;v_h,\chi_h,\mu_h)
-
c_h(u_h,v_h)
&=-(f,\widetilde v_h),
\label{eq:hybridizationmix2}
\end{align}
\end{subequations}
for all
$
(\boldsymbol\tau_h,v_h,\chi_h,\mu_h)
\in
\Sigma_{k,h}^{-1}\times
\mathring V_{k,h}^{\mathrm{VE}}\times
\mathbb P_k(\mathring{\mathcal F}_h)\times
\mathbb P_k(\mathring{\mathcal E}_h),
$
where
\begin{align*}
b_h^{\mathrm{hyb}}(\boldsymbol{\tau}_h; v_h,\chi_h,\mu_h)
 &:=
 -\sum_{T\in\mathcal T_h}\Bigl[
 (\div\div\boldsymbol{\tau}_h, Q_{k-2,T} v_h)_T +(\bs n_F^{\intercal}\boldsymbol{\tau}_h\bs n_{\partial T}, \chi_h)_{\partial T}
 \\
 &\qquad\quad
  -(\tr_2(\boldsymbol{\tau}_h), Q_{k-1,F}v_h)_{\partial T}+\sum_{e\in\Delta_{d-2}(T)}(\tr_e(\boldsymbol{\tau}_h), \mu_h)_e
 \Bigr].
 \end{align*}
If \(\boldsymbol\tau_h\in\Sigma_{k,h}^{nn}\), then
\begin{equation}\label{eq:hyb-reduction}
b_h^{\mathrm{hyb}}(\boldsymbol{\tau}_h; v_h,\chi_h,\mu_h)=b_h(\boldsymbol{\tau}_h;v_h,\mu_h)\;\;\forall\, v_h \in \mathring{V}_{k,h}^{\mathrm{VE}}, \chi_h\in\mathbb P_k(\mathring{\mathcal F}_h), \mu_h \in \mathbb P_k(\mathring{\mathcal E}_h).
\end{equation}

\begin{theorem}\label{thm:hybridized-equivalence}
The hybridized mixed method \eqref{eq:hybridizationmix} is well posed.
Moreover, it is equivalent to the distributional mixed method
\eqref{eq:fourth-order-discrete-mixed} in the following sense: if
$
(\boldsymbol\sigma_h,u_h,\gamma_h,\lambda_h)
\in
\Sigma_{k,h}^{-1}\times
\mathring V_{k,h}^{\mathrm{VE}}\times
\mathbb P_k(\mathring{\mathcal F}_h)\times
\mathbb P_k(\mathring{\mathcal E}_h)
$
solves \eqref{eq:hybridizationmix}, then
$
(\boldsymbol\sigma_h,u_h,\lambda_h)
\in
\Sigma_{k,h}^{nn}\times
\mathring V_{k,h}^{\mathrm{VE}}\times
\mathbb P_k(\mathring{\mathcal E}_h)
$
solves \eqref{eq:fourth-order-discrete-mixed}.
\end{theorem}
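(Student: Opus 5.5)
The plan has two parts. First, testing with the multiplier $\chi_h$ forces normal--normal continuity, which reduces the hybridized system to \eqref{eq:fourth-order-discrete-mixed}. Second, well-posedness follows from a uniqueness argument in the square finite-dimensional system.

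\textbf{Step 1: normal--normal continuity.} Let $(\boldsymbol\sigma_h,u_h,\gamma_h,\lambda_h)$ solve \eqref{eq:hybridizationmix}. Take $v_h=0$, $\mu_h=0$ and arbitrary $\chi_h\in\mathbb P_k(\mathring{\mathcal F}_h)$ in \eqref{eq:hybridizationmix2}. This gives
\[
\sum_{T\in\mathcal T_h}(\boldsymbol n_F^{\intercal}\boldsymbol\sigma_h\boldsymbol n_{\partial T},\chi_h)_{\partial T}
=\sum_{F\in\mathring{\mathcal F}_h}([\![\boldsymbol\sigma_{h,nn}]\!],\chi_h)_F=0,
\]
since $\chi_h$ vanishes on boundary faces. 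The jump $[\![\boldsymbol n_F^{\intercal}\boldsymbol\sigma_h\boldsymbol n_F]\!]|_F$ lies in $\mathbb P_k(F)$, so it vanishes on every interior face. Hence $\boldsymbol\sigma_h\in\Sigma_{k,h}^{nn}$.

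\textbf{Step 2: recovering the distributional method.} Restrict \eqref{eq:hybridizationmix1} to $\boldsymbol\tau_h\in\Sigma_{k,h}^{nn}\subset\Sigma_{k,h}^{-1}$. By \eqref{eq:hyb-reduction}, the $\gamma_h$ contribution drops out and we obtain \eqref{eq:fourth-order-discrete-mixed-a}. In \eqref{eq:hybridizationmix2} take $\chi_h=0$. Since $\boldsymbol\sigma_h\in\Sigma_{k,h}^{nn}$, \eqref{eq:hyb-reduction} again gives \eqref{eq:fourth-order-discrete-mixed-b}. This proves the equivalence claim.

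\textbf{Step 3: well-posedness.} The system is square, so it suffices to show that $f=0$ forces the zero solution. By Steps 1--2, $(\boldsymbol\sigma_h,u_h,\lambda_h)$ then solves \eqref{eq:fourth-order-discrete-mixed} with $f=0$. Theorem~\ref{thm:fourth-orderdistmixwellposedness} gives $\boldsymbol\sigma_h=0$, $u_h=0$, $\lambda_h=0$. Equation \eqref{eq:hybridizationmix1} then reduces to
\[
\sum_{T\in\mathcal T_h}(\boldsymbol n_F^{\intercal}\boldsymbol\tau_h\boldsymbol n_{\partial T},\gamma_h)_{\partial T}=0\qquad\forall\,\boldsymbol\tau_h\in\Sigma_{k,h}^{-1}.
\]

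The main point is to show this forces $\gamma_h=0$, i.e.\ that the face-multiplier coupling is injective on $\mathbb P_k(\mathring{\mathcal F}_h)$. Fix an interior face $F$ and an element $T$ containing it, with $F=F_i$ for some local index $i$. By Lemma~\ref{lem:nn-unisolvence}, the normal--normal face DoFs \eqref{eq:nnDoFs1} can be prescribed independently. So choose $\boldsymbol\tau_h$ supported on $T$ with $\boldsymbol n_i^{\intercal}\boldsymbol\tau_h\boldsymbol n_i|_{F}=\pm\gamma_h|_F$ (sign chosen to match $\boldsymbol n_F\cdot\boldsymbol n_{\partial T}$). Choose its normal--normal traces on the other faces of $T$ to be zero; the remaining DoFs are arbitrary.

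Only $F$ then contributes to the identity above, which gives $\|\gamma_h\|_{0,F}^2=0$. Since $F$ was arbitrary, $\gamma_h=0$ and the hybridized method \eqref{eq:hybridizationmix} is well posed.
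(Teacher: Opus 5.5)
Your proof is correct and uses the same ingredients as the paper. Testing with \(\chi_h\) enforces normal--normal continuity, \eqref{eq:hyb-reduction} recovers the distributional method, and a local choice of \(\boldsymbol\tau_h\) through the face DoFs \eqref{eq:nnDoFs1} forces \(\gamma_h=0\). The only difference is the order: you prove the equivalence first and then invoke Theorem~\ref{thm:fourth-orderdistmixwellposedness} to get \(\boldsymbol\sigma_h=0\), \(u_h=0\), \(\lambda_h=0\) in one step, whereas the paper repeats the energy argument inside the hybridized system and restricts to \(\Sigma_{k,h}^{nn}\) to obtain \(\lambda_h=0\).
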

\begin{proof}
We first prove well-posedness. It suffices to show that the homogeneous
problem has only the trivial solution. Let \(f=0\). Arguing as in the
proof of Theorem~\ref{thm:fourth-orderdistmixwellposedness}, we obtain
\(\boldsymbol\sigma_h=0\) and \(u_h=0\). Hence
\eqref{eq:hybridizationmix1} reduces to
\begin{equation}\label{eq:20260510}
b_h^{\mathrm{hyb}}(\boldsymbol\tau_h;0,\gamma_h,\lambda_h)=0
\qquad
\forall\,\boldsymbol\tau_h\in\Sigma_{k,h}^{-1}.
\end{equation}
In particular, by restricting the test functions to
\(\Sigma_{k,h}^{nn}\) and using \eqref{eq:hyb-reduction}, we have
\[
b_h(\boldsymbol\tau_h;0,\lambda_h)=0
\qquad
\forall\,\boldsymbol\tau_h\in\Sigma_{k,h}^{nn}.
\]
The proof of Theorem~\ref{thm:fourth-orderdistmixwellposedness} then gives
\(\lambda_h=0\). Therefore \eqref{eq:20260510} becomes
\[
\sum_{T\in\mathcal T_h}
(\boldsymbol n_F^{\intercal}\boldsymbol\tau_h\boldsymbol n_{\partial T},
\gamma_h)_{\partial T}
=0
\qquad
\forall\,\boldsymbol\tau_h\in\Sigma_{k,h}^{-1}.
\]
Since the stress space is broken, the face degrees of freedom
\eqref{eq:nnDoFs1} allow us to choose \(\boldsymbol\tau_h\) locally so that
$
\boldsymbol n_F^{\intercal}\boldsymbol\tau_h\boldsymbol n_{\partial T}
=\gamma_h
$
on each face. It follows that \(\gamma_h=0\). Thus the homogeneous problem
has only the zero solution, and the finite-dimensional system is well
posed.

We next prove the equivalence. Let
\((\boldsymbol\sigma_h,u_h,\gamma_h,\lambda_h)\) be the solution of
\eqref{eq:hybridizationmix}. Taking \(v_h=0\) and \(\mu_h=0\) in
\eqref{eq:hybridizationmix2}, and using arbitrary
\(\chi_h\in\mathbb P_k(\mathring{\mathcal F}_h)\), we obtain
\(\boldsymbol\sigma_h\in\Sigma_{k,h}^{nn}\). Therefore, by
\eqref{eq:hyb-reduction}, \eqref{eq:hybridizationmix2} reduces to
\eqref{eq:fourth-order-discrete-mixed-b}. Restricting
\(\boldsymbol\tau_h\) to \(\Sigma_{k,h}^{nn}\) in
\eqref{eq:hybridizationmix1}, and again using \eqref{eq:hyb-reduction}, we
obtain \eqref{eq:fourth-order-discrete-mixed-a}. Thus
\((\boldsymbol\sigma_h,u_h,\lambda_h)\) solves
\eqref{eq:fourth-order-discrete-mixed}.
\end{proof}

\subsection{Stabilization-free weak Galerkin method}
For
$
(v,\chi,\mu)\in
\mathring H_1^{\mathrm{nc}}(\mathcal T_h)
\times L^2(\mathring{\mathcal F}_h)
\times L^2(\mathring{\mathcal E}_h)$,
we define the hybrid weak Hessian
\(\nabla_{w,\mathrm{hyb}}^2(v,\chi,\mu)\in\Sigma_{k,h}^{-1}\) elementwise
by
\[
\begin{aligned}
(\nabla_{w,\mathrm{hyb}}^2(v,\chi,\mu),\boldsymbol\tau)_T
:={}&
(Q_{k-2,T}v,\operatorname{div}\operatorname{div}\boldsymbol\tau)_T
+
(\chi,\boldsymbol n_F^{\intercal}\boldsymbol\tau
\boldsymbol n_{\partial T})_{\partial T}
\\
&-
(Q_{k-1,F}v,\operatorname{tr}_2(\boldsymbol\tau))_{\partial T}
+
\sum_{e\in\Delta_{d-2}(T)}
(\mu,\operatorname{tr}_e(\boldsymbol\tau))_e
\end{aligned}
\]
for all \(\boldsymbol\tau\in\mathbb P_k(T;\mathbb S)\) and
\(T\in\mathcal T_h\).

By definition, for $\boldsymbol{\tau} \in \Sigma_{k,h}^{-1}$, $v \in \mathring H_1^{\mathrm{nc}}(\mathcal T_h)$, $\chi\in L^2(\mathring{\mathcal F}_h)$ and $\mu \in L^2(\mathring{\mathcal E}_h)$ 
\[
b_h^{\mathrm{hyb}}(\boldsymbol\tau;v,\chi,\mu)
=
-(\boldsymbol\tau,\nabla_{w,\mathrm{hyb}}^2(v,\chi,\mu)).
\]
Consequently, the first equation \eqref{eq:hybridizationmix1} is equivalent
to
\[
\boldsymbol\sigma_h
=
\varepsilon^2
\nabla_{w,\mathrm{hyb}}^2(u_h,\gamma_h,\lambda_h).
\]
Substituting this identity into \eqref{eq:hybridizationmix2}, we obtain the
following stabilization-free weak Galerkin formulation: find
$
(u_h,\gamma_h,\lambda_h)\in\mathring M_{k,h}^{\mathrm{hyb}}
$
such that
\begin{equation}\label{eq:WG}
\varepsilon^2
(\nabla_{w,\mathrm{hyb}}^2(u_h,\gamma_h,\lambda_h),
\nabla_{w,\mathrm{hyb}}^2(v,\chi,\mu))
+
(Q_{k-1,h}\nabla_h u_h,Q_{k-1,h}\nabla_h v)
=
(f,\widetilde v)
\end{equation}
for all \((v,\chi,\mu)\in\mathring M_{k,h}^{\mathrm{hyb}}\), where
\[
\mathring M_{k,h}^{\mathrm{hyb}}
:=
\mathring V_{k,h}^{\mathrm{VE}}
\times
\mathbb P_k(\mathring{\mathcal F}_h)
\times
\mathbb P_k(\mathring{\mathcal E}_h).
\]

\subsection{Stabilization-free \(H^2\)-nonconforming virtual element formulation}
We recall the \(H^2\)-nonconforming virtual element space from
\cite{ChenHuang2020nonconforming}. For \(k\ge1\), let
\begin{align*}
\mathring{W}_{k+2,h}^{\mathrm{VE}}
:= \{u\in L^2(\Omega):\ &
u|_T\in W_{k+2}^{\mathrm{VE}}(T)\; \textrm{ for } T\in\mathcal T_h;\ \textrm{ all the DoFs in \eqref{eq:H2NCVEM-DOFs} are } 
 \\
& \text{single-valued on } \mathring{\mathcal F}_h \textrm{ and } 
\mathring{\mathcal E}_h,\ \text{and vanish on }\partial\Omega\},
\end{align*}
where 
the local shape function space is
   \begin{align*}
    W_{k+2}^{\mathrm{VE}}(T):=&\{u\in H^2(T):
    \Delta^2u\in\mathbb P_{k-2}(T);\ \tr_e(\nabla^2u)\in\mathbb P_k(e)\, \textrm{ for } e\in\Delta_{d-2}(T);
     \\
    &\quad\; \tr_1(\nabla^2u)|_F\in\mathbb P_k(F),\, \tr_2(\nabla^2u)|_F\in\mathbb P_{k-1}(F) 
    \, \textrm{ for } F\in\Delta_{d-1}(T)\}.
   \end{align*}
A unisolvent set of DoFs is given by
\begin{equation}\label{eq:H2NCVEM-DOFs}
(Q_{k-2,T}u,\; Q_{k-1,F}u,\; Q_{k,F}(\partial_{n_F}u),\; Q_{k,e}u).
\end{equation}

   Since the first two groups of DoFs in \eqref{eq:H2NCVEM-DOFs}
   coincide with those of \(\mathring V_{k,h}^{\mathrm{VE}}\),  the DoFs in \eqref{eq:H2NCVEM-DOFs} induce the natural isomorphism $Q_M:\mathring{W}_{k+2,h}^{\mathrm{VE}}\to\mathring M_{k,h}^{\mathrm{hyb}}$ defined by 
   \[
   Q_Mv_h:=
   \bigl(I_h^{\mathrm{VE}}v_h,\ Q_{k,F}(\partial_{n_F}v_h),\ Q_{k,e}v_h\bigr),\quad F\in\mathring{\mathcal F}_h,\,e\in\mathring{\mathcal E}_h.
   \]
   
\begin{lemma}\label{lem:hyb-weak-Hessian-VEM}
For \(k\ge1\), it holds that
\begin{equation}\label{eq:operator-identities}
\nabla_{w,\mathrm{hyb}}^2(Q_M v_h)
=
Q_{k,h}\nabla_h^2 v_h
\qquad
\forall\,v_h\in\mathring{W}_{k+2,h}^{\mathrm{VE}}.
\end{equation}
\end{lemma}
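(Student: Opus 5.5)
Both sides of \eqref{eq:operator-identities} lie in $\Sigma_{k,h}^{-1}=\mathbb P_k(\mathcal T_h;\mathbb S)$. It therefore suffices to fix $T\in\mathcal T_h$ and $\boldsymbol\tau\in\mathbb P_k(T;\mathbb S)$ and to show
\[
(\nabla_{w,\mathrm{hyb}}^2(Q_Mv_h),\boldsymbol\tau)_T=(\nabla^2 v_h,\boldsymbol\tau)_T .
\]
The right-hand side already equals $(Q_{k,T}\nabla^2v_h,\boldsymbol\tau)_T$, since $\boldsymbol\tau\in\mathbb P_k(T;\mathbb S)$. The plan is to apply the Green identity \eqref{eq:greenidentitydivdiv} to the pair $(\boldsymbol\tau,v_h|_T)$. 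This is legitimate: $v_h|_T\in W_{k+2}^{\mathrm{VE}}(T)\subset H^2(T)$, and $\boldsymbol\tau$ is a polynomial. The identity gives
\[
(\boldsymbol\tau,\nabla^2v_h)_T=(\div\div\boldsymbol\tau,v_h)_T+(\tr_1(\boldsymbol\tau),\partial_nv_h)_{\partial T}-(\tr_2(\boldsymbol\tau),v_h)_{\partial T}+\sum_{e\in\Delta_{d-2}(T)}(\tr_e(\boldsymbol\tau),v_h)_e .
\]

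Next we compare this term by term with the definition of $\nabla^2_{w,\mathrm{hyb}}$ evaluated at
\[
Q_Mv_h=\bigl(I_h^{\mathrm{VE}}v_h,\;Q_{k,F}(\partial_{n_F}v_h),\;Q_{k,e}v_h\bigr).
\]
Each comparison uses the polynomial degree of a trace of $\boldsymbol\tau$.
\begin{itemize}
\item Since $\div\div\boldsymbol\tau\in\mathbb P_{k-2}(T)$ and $I_h^{\mathrm{VE}}$ preserves the cell moments \eqref{eq:vem-dof-cell}, we have $(\div\div\boldsymbol\tau,v_h)_T=(\div\div\boldsymbol\tau,Q_{k-2,T}I_h^{\mathrm{VE}}v_h)_T$.
\item Since $\tr_2(\boldsymbol\tau)|_F\in\mathbb P_{k-1}(F)$ and $I_h^{\mathrm{VE}}$ preserves the face moments \eqref{eq:vem-dof-face}, we have $(\tr_2(\boldsymbol\tau),v_h)_{\partial T}=(\tr_2(\boldsymbol\tau),Q_{k-1,F}I_h^{\mathrm{VE}}v_h)_{\partial T}$.
\item Since $\tr_e(\boldsymbol\tau)\in\mathbb P_k(e)$, we have $(\tr_e(\boldsymbol\tau),v_h)_e=(\tr_e(\boldsymbol\tau),Q_{k,e}v_h)_e$.
\item For the normal--normal term, write $\partial_nv_h=(\boldsymbol n_F\cdot\boldsymbol n_{\partial T})\partial_{n_F}v_h$. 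Since $\tr_1(\boldsymbol\tau)|_F\in\mathbb P_k(F)$, we get
\[
(\tr_1(\boldsymbol\tau),\partial_nv_h)_{\partial T}=(\boldsymbol n_F^{\intercal}\boldsymbol\tau\boldsymbol n_{\partial T},\,Q_{k,F}\partial_{n_F}v_h)_{\partial T},
\]
which is exactly the $\chi$-term in the definition of $\nabla^2_{w,\mathrm{hyb}}$.
\end{itemize}
Summing the four identities gives the claimed equality.

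The main obstacle is bookkeeping on boundary entities. The multipliers $\chi$ and $\mu$ live only on interior faces and $(d-2)$-subsimplices and are extended by zero elsewhere. We therefore have to check that the corresponding traces of $v_h$ also vanish there:
\begin{itemize}
\item $Q_{k,F}\partial_{n_F}v_h=0$ on $F\subset\partial\Omega$;
\item $Q_{k,e}v_h=0$ on $e\subset\partial\Omega$;
\item $Q_{k-1,F}v_h=0$ on boundary faces.
\end{itemize}
All three hold because the DoFs \eqref{eq:H2NCVEM-DOFs} of $\mathring W_{k+2,h}^{\mathrm{VE}}$ vanish on $\partial\Omega$, so the definition of $Q_M$ is consistent with the zero extension. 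A second check is that $\boldsymbol n_F$, a global orientation, enters both the jump-free $\chi$-term and $\partial_{n_F}v_h$. The factor $(\boldsymbol n_F\cdot\boldsymbol n_{\partial T})^2=1$ then makes the element-local expression independent of the sign convention.
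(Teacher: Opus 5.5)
Your proposal is correct and follows the same route as the paper, whose proof is the one-line remark that the identity follows from the Green identity \eqref{eq:greenidentitydivdiv} together with the definitions of \(\nabla_{w,\mathrm{hyb}}^2\) and \(Q_M\). Your argument supplies the details behind that remark: the term-by-term matching via the polynomial degrees of \(\div\div\boldsymbol\tau\), \(\tr_2(\boldsymbol\tau)\), \(\boldsymbol n_F^{\intercal}\boldsymbol\tau\boldsymbol n_{\partial T}\) and \(\tr_e(\boldsymbol\tau)\), the sign-convention factor \((\boldsymbol n_F\cdot\boldsymbol n_{\partial T})^2=1\), and the vanishing of the degrees of freedom on \(\partial\Omega\).
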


\begin{proof}
It follows from the Green identity
\eqref{eq:greenidentitydivdiv} and the definitions of
\(\nabla_{w,\mathrm{hyb}}^2\) and \(Q_M\).
\end{proof}

Under the isomorphism \(Q_M\), the weak Galerkin scheme \eqref{eq:WG} can
be rewritten as the following stabilization-free \(H^2\)-nonconforming
virtual element method: find \(u_h\in\mathring{W}_{k+2,h}^{\mathrm{VE}}\)
such that
\begin{equation}\label{eq:VEMWG}
\varepsilon^2
(Q_{k,h}\nabla_h^2 u_h,Q_{k,h}\nabla_h^2 v_h)
+
(Q_{k-1,h}\nabla_h I_h^{\mathrm{VE}}u_h,
Q_{k-1,h}\nabla_h I_h^{\mathrm{VE}}v_h)
=
(f,\widetilde v_h)
\end{equation}
for all \(v_h\in\mathring{W}_{k+2,h}^{\mathrm{VE}}\). By the norm
equivalence in \cite[(4.14)]{ChenHuang2025},
\(\|Q_{k,h}\nabla_h^2 v_h\|_0\) is a norm on
\(\mathring{W}_{k+2,h}^{\mathrm{VE}}\). Hence
\eqref{eq:VEMWG} is well posed.

\begin{theorem}\label{thm:VEM-WG-equivalence}
The \(H^2\)-nonconforming virtual element formulation \eqref{eq:VEMWG} is
equivalent to the weak Galerkin formulation \eqref{eq:WG}, and hence to
the distributional mixed method \eqref{eq:fourth-order-discrete-mixed}.
More precisely, if \(u_h\in\mathring{W}_{k+2,h}^{\mathrm{VE}}\) solves
\eqref{eq:VEMWG}, then \(Q_Mu_h\in\mathring M_{k,h}^{\mathrm{hyb}}\)
solves \eqref{eq:WG}; conversely, every solution of \eqref{eq:WG} is the
image under \(Q_M\) of a unique solution of \eqref{eq:VEMWG}.
\end{theorem}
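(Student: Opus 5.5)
The plan is to transport \eqref{eq:VEMWG} term by term to \eqref{eq:WG} through the isomorphism \(Q_M\). We show that for all \(u_h,v_h\in\mathring{W}_{k+2,h}^{\mathrm{VE}}\) the bilinear and linear forms of \eqref{eq:VEMWG} coincide with those of \eqref{eq:WG} evaluated at \((Q_Mu_h,Q_Mv_h)\). Since \(Q_M\) is a bijection between the trial/test spaces, solutions then correspond one-to-one. The last link, to \eqref{eq:fourth-order-discrete-mixed}, follows from Theorem~\ref{thm:hybridized-equivalence} together with the local elimination \(\boldsymbol\sigma_h=\varepsilon^2\nabla_{w,\mathrm{hyb}}^2(u_h,\gamma_h,\lambda_h)\) already derived before \eqref{eq:WG}.

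\textbf{Step 1: the fourth-order term.} By Lemma~\ref{lem:hyb-weak-Hessian-VEM},
\[
\varepsilon^2(\nabla_{w,\mathrm{hyb}}^2(Q_Mu_h),\nabla_{w,\mathrm{hyb}}^2(Q_Mv_h))=\varepsilon^2(Q_{k,h}\nabla_h^2u_h,Q_{k,h}\nabla_h^2v_h).
\]

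\textbf{Step 2: the second-order term.} The first component of \(Q_Mv_h\) is \(I_h^{\mathrm{VE}}v_h\), so \((Q_{k-1,h}\nabla_h I_h^{\mathrm{VE}}u_h,Q_{k-1,h}\nabla_h I_h^{\mathrm{VE}}v_h)\) is exactly the \(c_h\) term of \eqref{eq:WG} at \(Q_Mu_h\), \(Q_Mv_h\).

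\textbf{Step 3: the load term.} The right-hand side of \eqref{eq:VEMWG} must be read as \((f,\widetilde{I_h^{\mathrm{VE}}v_h})\), since \(\widetilde v\) is built only from \(Q_{k-2,T}v\) and the Crouzeix--Raviart interpolant. Both of these depend only on the DoFs \(Q_{k-2,T}v\) and \(Q_{k-1,F}v\) (for \(k\ge2\); for \(k=1\) only \(Q_{0,F}v\)), which are preserved by \(I_h^{\mathrm{VE}}\). Hence \(\widetilde{v_h}=\widetilde{I_h^{\mathrm{VE}}v_h}\), and the load functionals agree.

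\textbf{Step 4: conclusion.} Surjectivity of \(Q_M\) means every test triple \((v,\chi,\mu)\in\mathring M_{k,h}^{\mathrm{hyb}}\) equals \(Q_Mv_h\) for some \(v_h\). Therefore \(u_h\) solves \eqref{eq:VEMWG} if and only if \(Q_Mu_h\) solves \eqref{eq:WG}. Conversely, given a solution of \eqref{eq:WG}, its unique preimage under \(Q_M\) solves \eqref{eq:VEMWG}; uniqueness also follows from the stated well-posedness of \eqref{eq:VEMWG}.

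The only delicate point is making sure \(Q_M\) is genuinely a bijection onto \(\mathring M_{k,h}^{\mathrm{hyb}}\). This needs three facts: the DoFs \eqref{eq:H2NCVEM-DOFs} are unisolvent and single-valued with zero boundary values; the global \(\mathring V_{k,h}^{\mathrm{VE}}\) is parametrized exactly by \((Q_{k-2,T},Q_{k-1,F})\) on interior faces; and \(Q_{k,F}(\partial_{n_F}\cdot)\), \(Q_{k,e}\) range over \(\mathbb P_k(\mathring{\mathcal F}_h)\) and \(\mathbb P_k(\mathring{\mathcal E}_h)\). A secondary check is the orientation and sign convention of \(\boldsymbol n_F\) in the \(\chi\)-term of the hybrid weak Hessian. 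It should match \(Q_{k,F}(\partial_{n_F}v_h)\) through the Green identity \eqref{eq:greenidentitydivdiv}; this is already encoded in Lemma~\ref{lem:hyb-weak-Hessian-VEM}.
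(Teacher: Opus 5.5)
Your proof is correct and follows the same route as the paper, which simply invokes the operator identity \eqref{eq:operator-identities} together with the fact that $Q_M$ is an isomorphism from $\mathring{W}_{k+2,h}^{\mathrm{VE}}$ onto $\mathring M_{k,h}^{\mathrm{hyb}}$. Your Steps 2--3 usefully spell out what the paper leaves implicit: the $c_h$ term matches by construction of $Q_M$, and $\widetilde v_h=\widetilde{I_h^{\mathrm{VE}}v_h}$ holds because $I_h^{\mathrm{VE}}$ preserves $Q_{k-2,T}v_h$ and the face moments that determine $v_h^{\mathrm{CR}}$.
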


\begin{proof}
The assertion follows from the operator identity
\eqref{eq:operator-identities} and the fact that \(Q_M\) is an isomorphism
between \(\mathring{W}_{k+2,h}^{\mathrm{VE}}\) and
\(\mathring M_{k,h}^{\mathrm{hyb}}\).
\end{proof}  
  
 
 \section{Numerical Results}\label{sec:numerresults}
 
We present two numerical tests for the lowest-order case \(k=1\). The smooth
manufactured-solution test checks the estimates \eqref{eq:robusterror1},
\eqref{eq:errorestimate}, and \eqref{eq:robusterror2}. The L-shaped constant-load benchmark is a non-manufactured
clamped-plate test on a nonsmooth polygonal domain. It examines how the adaptive
method captures mesh concentration near the reentrant corner and, for small
\(\varepsilon\), along the clamped boundary. All computations are carried out in MATLAB using
iFEM~\cite{Chen2009}.

\subsection{Smooth Parameter-Uniform Tests}

On the unit cube \(\Omega=(0,1)^3\), uniformly refined simplicial meshes are
used. For
\[
u=\sin^2(\pi x)\sin^2(\pi y)\sin^2(\pi z),
\]
the load \(f\) is obtained from \eqref{eq:fourthorderequation}. We measure
\[
\mathrm{Err}_1
:=
\varepsilon^{-1}\|\boldsymbol{\sigma}-\boldsymbol{\sigma}_h\|_{0}
+
|\!|\!|(u-u_h,\lambda-\lambda_h)|\!|\!|_{\varepsilon,h}.
\]
For the optimal estimate, we also compute
\[
\mathrm{Err}_u
:=
|\!|\!|(I_h^{\mathrm{VE}}u-u_h,
Q_{k,\mathcal{E}_h}\lambda-\lambda_h)|\!|\!|_{\varepsilon,h},
\qquad
\mathrm{Err}_\sigma
:=
\varepsilon^{-1}\|\boldsymbol{\sigma}-\boldsymbol{\sigma}_h\|_{0}.
\]
Table~\ref{tab:robust3d} reports the robust error for
\(\varepsilon=1,10^{-1},10^{-5},10^{-6}\), and
Table~\ref{tab:3D_sigma_error} reports the optimal components for
\(\varepsilon=1,10^{-1}\).

\begin{table}[htbp]
	\centering
	\caption{Robust error \(\mathrm{Err}_1\) on the smooth three-dimensional manufactured solution.}
	\label{tab:robust3d}
	\small
	\setlength{\tabcolsep}{4pt}
	\renewcommand{\arraystretch}{1.12}
	\begin{tabular}{cc c c c c c c c c}
		\toprule
		\multirow{2}{*}{\(k\)} & \multirow{2}{*}{\(h\)}
		& \multicolumn{2}{c}{\(\varepsilon = 1\)}
		& \multicolumn{2}{c}{\(\varepsilon = 10^{-1}\)}
		& \multicolumn{2}{c}{\(\varepsilon = 10^{-5}\)}
		& \multicolumn{2}{c}{\(\varepsilon = 10^{-6}\)} \\
		\cmidrule(lr){3-4}\cmidrule(lr){5-6}\cmidrule(lr){7-8}\cmidrule(lr){9-10}
		& & \(\mathrm{Err}_1\) & Rate & \(\mathrm{Err}_1\) & Rate
		& \(\mathrm{Err}_1\) & Rate & \(\mathrm{Err}_1\) & Rate \\
		\midrule
		\multirow{5}{*}{\(1\)}
		& \(1/2\)  & 7.299e+00 & --   & 1.449e+00 & --   & 9.836e-01 & --   & 9.835e-01 & --   \\
		& \(1/4\)  & 2.524e+00 & 1.53 & 7.950e-01 & 0.87 & 7.023e-01 & 0.49 & 7.022e-01 & 0.49 \\
		& \(1/8\)  & 8.433e-01 & 1.58 & 3.616e-01 & 1.14 & 3.639e-01 & 0.95 & 3.637e-01 & 0.95 \\
		& \(1/16\) & 2.898e-01 & 1.54 & 1.665e-01 & 1.12 & 1.836e-01 & 0.99 & 1.835e-01 & 0.99 \\
		& \(1/32\) & 1.099e-01 & 1.40 & 7.910e-02 & 1.07 & 9.208e-02 & 1.00 & 9.195e-02 & 1.00 \\
		\bottomrule
	\end{tabular}
\end{table}

\begin{table}[htbp]
	\centering
	\caption{Optimal components \(\mathrm{Err}_u\) and \(\mathrm{Err}_\sigma\) on the smooth manufactured solution.}
	\label{tab:3D_sigma_error}
	\small
	\setlength{\tabcolsep}{4pt}
	\renewcommand{\arraystretch}{1.12}
	\begin{tabular}{cc c c c c c c c c}
		\toprule
		\multirow{2}{*}{\(k\)} & \multirow{2}{*}{\(h\)}
		& \multicolumn{4}{c}{\(\varepsilon = 1\)}
		& \multicolumn{4}{c}{\(\varepsilon = 10^{-1}\)} \\
		\cmidrule(lr){3-6}\cmidrule(lr){7-10}
		& & \(\mathrm{Err}_\sigma\) & Rate & \(\mathrm{Err}_u\) & Rate
		& \(\mathrm{Err}_\sigma\) & Rate & \(\mathrm{Err}_u\) & Rate \\
		\midrule
		\multirow{5}{*}{\(1\)}
		& \(1/2\)   & 5.896e+00 & --    & 1.105e+00 & --    & 5.726e-01 & --    & 1.415e-01 & --    \\
		& \(1/4\)   & 1.857e+00 & 1.67 & 3.566e-01 & 1.63 & 2.258e-01 & 1.34 & 7.590e-02 & 0.90 \\
		& \(1/8\)   & 5.245e-01 & 1.82 & 1.224e-01 & 1.54 & 6.653e-02 & 1.76 & 2.104e-02 & 1.85 \\
		& \(1/16\)  & 1.372e-01 & 1.94 & 3.409e-02 & 1.84 & 1.763e-02 & 1.92 & 5.395e-03 & 1.96 \\
		& \(1/32\)  & 3.483e-02 & 1.98 & 8.823e-03 & 1.95 & 4.499e-03 & 1.97 & 1.360e-03 & 1.99 \\
		\bottomrule
	\end{tabular}
\end{table}
 
For the reduced-limit estimate \eqref{eq:robusterror2}, we take
\(\bar u=\sin(\pi x)\sin(\pi y)\sin(\pi z)\) in \eqref{eq:poisson} and compute
\[
\mathrm{Err}_2
:=
\varepsilon \Bigl( \sum_{T \in \mathcal{T}_h} h_T^{-4}
\big\| Q_{k-2,T}(\bar u-u_h-\bar u^{\mathrm{CR}}+u_h^{\mathrm{CR}})
\big\|_{0,T}^2 \Bigr)^{1/2}
+
\left\|\nabla \bar u-Q_{k-1,h}\nabla_h u_h \right\|_0 .
\]
Table~\ref{tab:3D_Qhdiv_error} reports the results for
\(\varepsilon=10^{-6},10^{-8},10^{-10}\). The rates in
Tables~\ref{tab:robust3d}--\ref{tab:3D_Qhdiv_error} are consistent with
\eqref{eq:robusterror1}, \eqref{eq:errorestimate}, and
\eqref{eq:robusterror2}, respectively, with no visible deterioration as
\(\varepsilon\) decreases.

\begin{table}[htbp]
	\centering
	\caption{Reduced-limit error \(\mathrm{Err}_2\) on the smooth three-dimensional test.}
	\label{tab:3D_Qhdiv_error}
	\small
	\setlength{\tabcolsep}{4pt}
	\renewcommand{\arraystretch}{1.12}
	\begin{tabular}{cc c c c c c c}
		\toprule
		\multirow{2}{*}{$k$} & \multirow{2}{*}{$h$}
		& \multicolumn{2}{c}{$\varepsilon = 10^{-6}$}
		& \multicolumn{2}{c}{$\varepsilon = 10^{-8}$}
		& \multicolumn{2}{c}{$\varepsilon = 10^{-10}$} \\
		\cmidrule(lr){3-4}\cmidrule(lr){5-6}\cmidrule(lr){7-8}
		& & $\mathrm{Err}_2$ & Rate & $\mathrm{Err}_2$ & Rate & $\mathrm{Err}_2$ & Rate \\
		\midrule
		\multirow{5}{*}{$1$}
		& $1/2$   & 1.106e+00 & --    & 1.106e+00 & --    & 1.106e+00 & --    \\
		& $1/4$   & 5.815e-01 & 0.93 & 5.815e-01 & 0.93 & 5.815e-01 & 0.93 \\
		& $1/8$   & 2.942e-01 & 0.98 & 2.942e-01 & 0.98 & 2.942e-01 & 0.98 \\
		& $1/16$  & 1.475e-01 & 1.00 & 1.475e-01 & 1.00 & 1.475e-01 & 1.00 \\
		& $1/32$  & 7.381e-02 & 1.00 & 7.381e-02 & 1.00 & 7.381e-02 & 1.00 \\
		\bottomrule
	\end{tabular}
\end{table}

\subsection{Adaptive L-Shaped Constant-Load Test}
\label{sec:k1-adaptive-lshape-load}

We consider a clamped plate under the uniform load \(f=1\) on the L-shaped
domain \(\Omega_L=(-1,1)^2\setminus([0,1]\times[-1,0])\), whose reentrant
corner is the origin. Since no exact solution is available, the adaptive
computation is used to guide mesh refinement and to visualize the recovered
curvature near the corner singularity and, for small \(\varepsilon\), along
boundary layers. The
model problem is
\[
\varepsilon^2\Delta^2u-\Delta u=1\quad\hbox{in }\Omega_L,
\qquad
u=\partial_nu=0\quad\hbox{on }\partial\Omega_L .
\]
In this adaptive test, the element size used in the indicator and in the
mesh-density plots is the area scale \(h_T=|T|^{1/2}\).
For \(T\in\mathcal T_h\), define
\[
\kappa_T=\min\{1,h_T/\varepsilon\}.
\]
The adaptive indicator used for marking is
\[
\eta_h^2
=\eta_R^2+\eta_t^2+\eta_{\sigma}^2+\eta_\lambda^2,
\]
where
\[
\begin{aligned}
\eta_R^2&=\sum_{T\in\mathcal T_h}(\kappa_T h_T)^2
\|Q_{0,T}f\|_{0,T}^2,\qquad
\eta_t^2=\sum_{F\in\mathcal F_h}h_F
\|[\![\nabla_hu_h\cdot t_F]\!]\|_{0,F}^2,\\
\eta_{\sigma}^2
&:=
\varepsilon^2\sum_{T\in\mathcal T_h}h_T^2
\|\operatorname{rot}(\varepsilon^{-2}\boldsymbol\sigma_h)\|_{0,T}^2 +
\varepsilon^2\sum_{F\in\mathcal F_h}h_F
\|[\![(\varepsilon^{-2}\boldsymbol\sigma_h)t_F]\!]\|_{0,F}^2,\\
\eta_\lambda^2&=\varepsilon^2
\sum_{T\in\mathcal T_h}\sum_{a\in\Delta_0(T)}
h_T^{-2}\bigl|u_h|_T(a)-\lambda_h(a)\bigr|^2 .
\end{aligned}
\]
For a piecewise smooth symmetric tensor \(\boldsymbol\tau\), set
\[
\operatorname{rot}\boldsymbol\tau
:=
\begin{pmatrix}
\partial_1\tau_{12}-\partial_2\tau_{11}\\
\partial_1\tau_{22}-\partial_2\tau_{12}
\end{pmatrix}.
\]
Here \(Q_{0,T}f=f\), so the data oscillation is zero. We use D\"orfler marking
with \(\theta=0.5\).

Fig.~\ref{fig:k1-lshape-mesh-curvature} compares adaptive results at
comparable numbers of degrees of freedom. The first two panels show the mesh
density \(\log_2(h_{\max}/h_T)\), where larger values indicate finer elements.
The third panel shows a smoothed logarithmic visualization of
\(\log_{10}(1+\|\varepsilon^{-2}\boldsymbol\sigma_h\|_F)\) for
\(\varepsilon=10^{-3}\) on the mesh in the middle panel. Compared with \(\varepsilon=1\), the
\(\varepsilon=10^{-3}\) mesh shows sharper concentration along the clamped
boundary. In the finest refined boundary regions, \(h_T\) is smaller than
\(\varepsilon\), indicating that the adaptive mesh resolves the expected
\(O(\varepsilon)\) boundary-layer scale.

\begin{figure}[htbp]
	\centering
	\begin{minipage}[b]{0.31\textwidth}
		\centering
		\includegraphics[width=\textwidth,trim=0pt 0pt 45pt 18pt,clip]{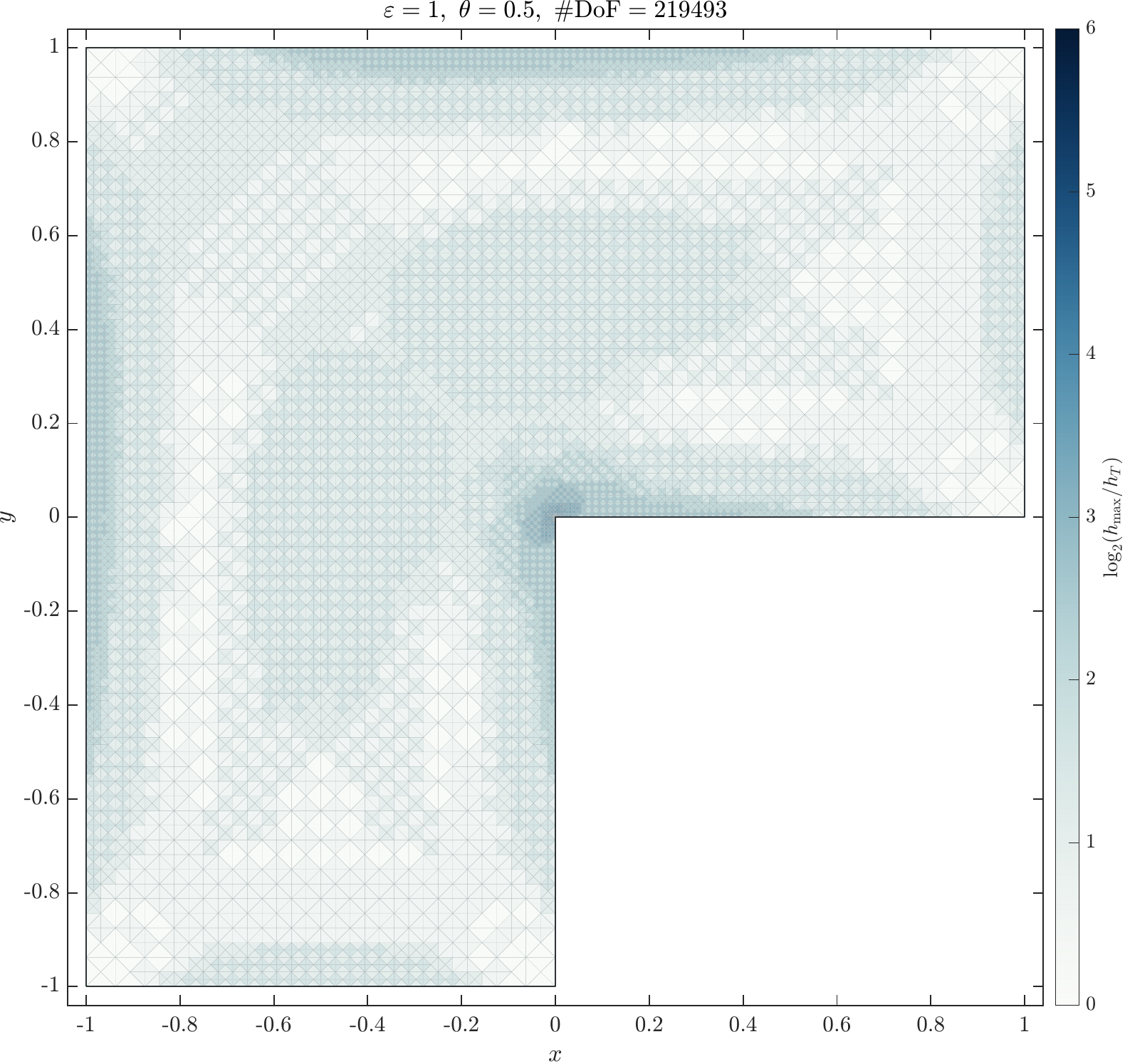}\\[-1mm]
		{\small (a) Mesh density, \(\varepsilon=1\).}
	\end{minipage}\hfill
	\begin{minipage}[b]{0.31\textwidth}
		\centering
		\includegraphics[width=\textwidth,trim=0pt 0pt 45pt 18pt,clip]{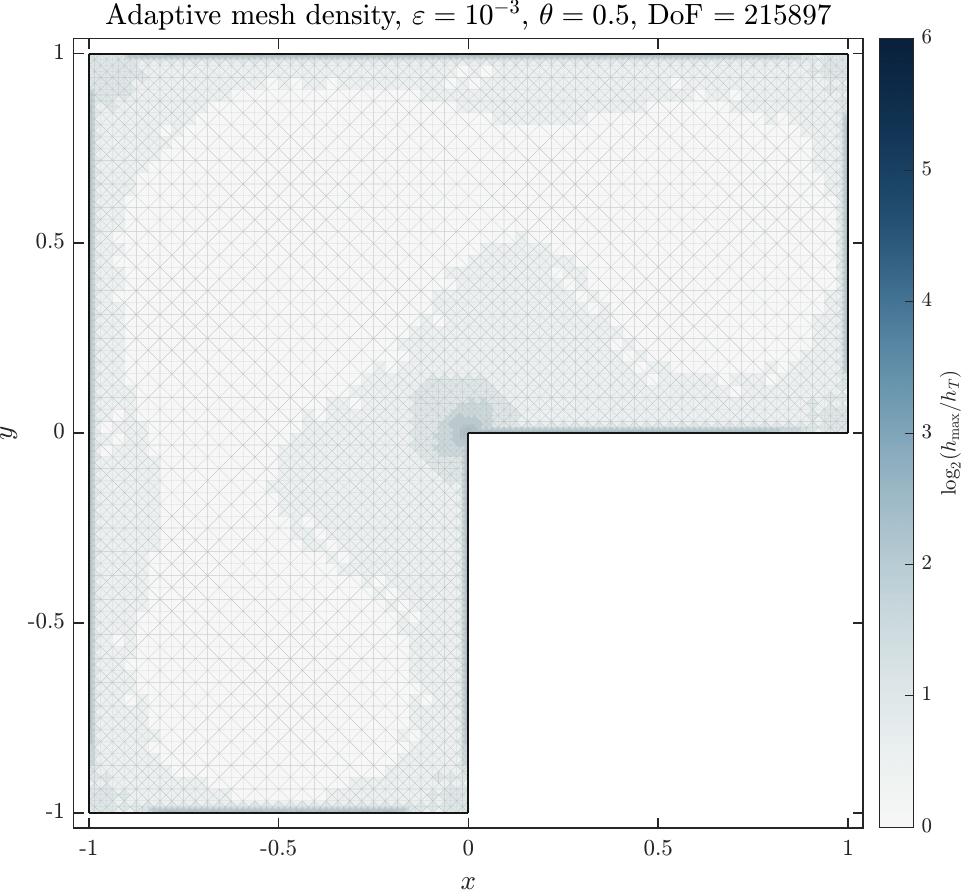}\\[-1mm]
		{\small (b) Mesh density, \(\varepsilon=10^{-3}\).}
	\end{minipage}\hfill
	\begin{minipage}[b]{0.34\textwidth}
		\centering
		\includegraphics[width=\textwidth,trim=30pt 0pt 155pt 85pt,clip]{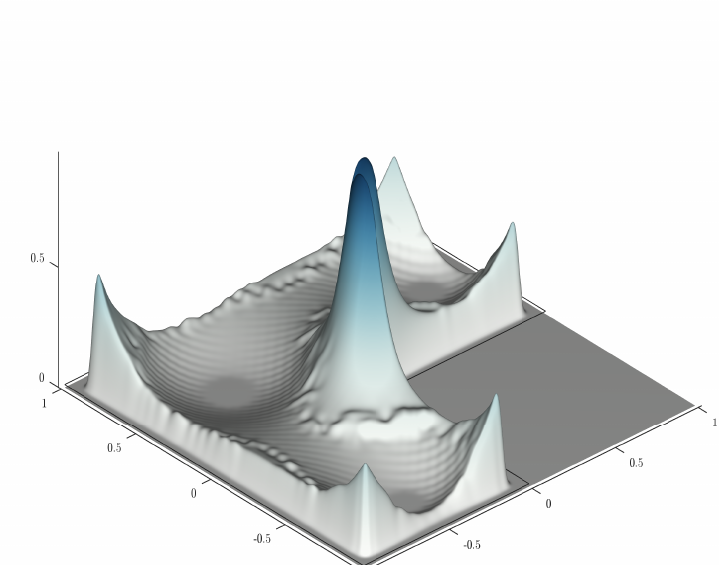}\\[-1mm]
		{\small (c) Recovered-curvature surface.}
	\end{minipage}
	\caption{Adaptive results for the L-shaped test with \(f=1\) and
		\(\theta=0.5\): (a) mesh density for \(\varepsilon=1\) with
		\(219493\) free degrees of freedom; (b) mesh density for
		\(\varepsilon=10^{-3}\) with \(215897\) free degrees of freedom; and
		(c)
		smoothed \(\log_{10}(1+\|\varepsilon^{-2}\boldsymbol\sigma_h\|_F)\)
		surface for \(\varepsilon=10^{-3}\). Here \(\|\cdot\|_F\) denotes the Frobenius norm.}
	\label{fig:k1-lshape-mesh-curvature}
\end{figure}

\bibliographystyle{siamplain}
\bibliography{references_nodoi}

\begin{thebibliography}{10}

\bibitem{Adolfsson1992}
{\sc V.~Adolfsson}, {\em {$L^2$}-integrability of second order derivatives for
  {Poisson}'s equation in nonsmooth domains}, Math. Scand., 70 (1992),
  pp.~146--160.

\bibitem{De2016nonconforming}
{\sc B.~Ayuso~de Dios, K.~Lipnikov, and G.~Manzini}, {\em The nonconforming
  virtual element method}, ESAIM Math. Model. Numer. Anal., 50 (2016),
  pp.~879--904.

\bibitem{BehrensGuzman2011}
{\sc E.~M. Behrens and J.~Guzm\'an}, {\em A mixed method for the biharmonic
  problem based on a system of first-order equations}, SIAM J. Numer. Anal., 49
  (2011), pp.~789--817.

\bibitem{BerchenkoKoganGawlik2025}
{\sc Y.~Berchenko-Kogan and E.~S. Gawlik}, {\em Finite element spaces of double
  forms}, arXiv preprint arXiv:2505.17243,  (2025).

\bibitem{BoffiBrezziFortin2013}
{\sc D.~Boffi, F.~Brezzi, and M.~Fortin}, {\em Mixed finite element methods and
  applications}, vol.~44 of Springer Series in Computational Mathematics,
  Springer, Heidelberg, 2013.

\bibitem{Brenner2003}
{\sc S.~C. Brenner}, {\em Poincar\'{e}-{F}riedrichs inequalities for piecewise
  {$H^1$} functions}, SIAM J. Numer. Anal., 41 (2003), pp.~306--324.

\bibitem{BrennerNeilan2011}
{\sc S.~C. Brenner and M.~Neilan}, {\em A {$C^0$} interior penalty method for a
  fourth order elliptic singular perturbation problem}, SIAM J. Numer. Anal.,
  49 (2011), pp.~869--892.

\bibitem{Brezzi1987}
{\sc F.~Brezzi, J.~Douglas, Jr., R.~Dur\'{a}n, and M.~Fortin}, {\em Mixed
  finite elements for second order elliptic problems in three variables},
  Numer. Math., 51 (1987), pp.~237--250.

\bibitem{Brezzi1985}
{\sc F.~Brezzi, J.~Douglas, Jr., and L.~D. Marini}, {\em Two families of mixed
  finite elements for second order elliptic problems}, Numer. Math., 47 (1985),
  pp.~217--235.

\bibitem{CarstensenHeuer2025}
{\sc C.~Carstensen and N.~Heuer}, {\em Normal-normal continuous symmetric
  stresses in mixed finite element elasticity}, Math. Comp., 94 (2025),
  pp.~1571--1602.

\bibitem{CarstensenHeuer2026}
{\sc C.~Carstensen and N.~Heuer}, {\em Normal-normal continuous symmetric
  stress approximation in three-dimensional linear elasticity}, Numer. Math.,
  (2026).

\bibitem{ChenChenHuangWei2024}
{\sc C.~Chen, L.~Chen, X.~Huang, and H.~Wei}, {\em Geometric decomposition and
  efficient implementation of high order face and edge elements}, Commun.
  Comput. Phys., 35 (2024), pp.~1045--1072.

\bibitem{Chen2009}
{\sc L.~Chen}, {\em {iFEM}: an integrated finite element methods package in
  {MATLAB}}, tech. report, University of California, Irvine, 2009.

\bibitem{ChenHuang2020nonconforming}
{\sc L.~Chen and X.~Huang}, {\em Nonconforming virtual element method for
  {$2m$}-th order partial differential equations in {$\mathbb{R}^n$}}, Math.
  Comp., 89 (2020), pp.~1711--1744.

\bibitem{ChenHuang2022}
{\sc L.~Chen and X.~Huang}, {\em Finite elements for {div}- and
  {divdiv}-conforming symmetric tensors in arbitrary dimension}, SIAM J. Numer.
  Anal., 60 (2022), pp.~1932--1961.

\bibitem{ChenHuang2022a}
{\sc L.~Chen and X.~Huang}, {\em Finite elements for {${\rm div\,div}$}
  conforming symmetric tensors in three dimensions}, Math. Comp., 91 (2022),
  pp.~1107--1142.

\bibitem{Chen2024}
{\sc L.~Chen and X.~Huang}, {\em {$H(\operatorname{div})$}-conforming finite
  element tensors with constraints}, Results Appl. Math., 23 (2024), p.~Paper
  No. 100494.

\bibitem{ChenHuang2025}
{\sc L.~Chen and X.~Huang}, {\em A new {div-div-conforming} symmetric tensor
  finite element space with applications to the biharmonic equation}, Math.
  Comp., 94 (2025), pp.~33--72.

\bibitem{Ciarlet1997Plates}
{\sc P.~G. Ciarlet}, {\em Mathematical Elasticity. Volume II: Theory of
  Plates}, vol.~27 of Studies in Mathematics and its Applications,
  North-Holland, Amsterdam, 1997.

\bibitem{CrouzeixRaviart1973}
{\sc M.~Crouzeix and P.-A. Raviart}, {\em Conforming and nonconforming finite
  element methods for solving the stationary {S}tokes equations. {I}}, Rev.
  Fran\c{c}aise Automat. Informat. Recherche Op\'{e}rationnelle S\'{e}r. Rouge,
  7 (1973), pp.~33--75.

\bibitem{CuiHuang2025}
{\sc X.~Cui and X.~Huang}, {\em Low-order finite element complex with
  application to a fourth-order elliptic singular perturbation problem}, SIAM
  J. Numer. Anal., 64 (2026), pp.~828--852.

\bibitem{DongErn2021}
{\sc Z.~Dong and A.~Ern}, {\em Hybrid high-order method for singularly
  perturbed fourth-order problems on curved domains}, ESAIM Math. Model. Numer.
  Anal., 55 (2021), pp.~3091--3114.

\bibitem{FengYu2024}
{\sc F.~Feng and Y.~Yu}, {\em A modified interior penalty virtual element
  method for fourth-order singular perturbation problems}, J. Sci. Comput., 101
  (2024), p.~Paper No. 21.

\bibitem{FranzRoosWachtel2014}
{\sc S.~Franz, H.~G. Roos, and A.~Wachtel}, {\em A {$C^0$} interior penalty
  method for a singularly-perturbed fourth-order elliptic problem on a
  layer-adapted mesh}, Numer. Methods Partial Differential Equations, 30
  (2014), pp.~838--861.

\bibitem{GaoLai2020}
{\sc F.~Gao and M.-J. Lai}, {\em A new {$H^2$} regularity condition of the
  solution to the {Dirichlet} problem for the {Poisson} equation and its
  applications}, Acta Math. Sin. (Engl. Ser.), 36 (2020), pp.~21--39.

\bibitem{GurtinFriedAnand2010}
{\sc M.~E. Gurtin, E.~Fried, and L.~Anand}, {\em The Mechanics and
  Thermodynamics of Continua}, Cambridge University Press, Cambridge, 2010.

\bibitem{GuzmanLeykekhmanNeilan2012}
{\sc J.~Guzm\'{a}n, D.~Leykekhman, and M.~Neilan}, {\em A family of
  non-conforming elements and the analysis of {Nitsche's} method for a
  singularly perturbed fourth order problem}, Calcolo, 49 (2012), pp.~95--125.

\bibitem{Hellan1967}
{\sc K.~Hellan}, {\em Analysis of elastic plates in flexure by a simplified
  finite element method}, Acta Polytech. Scand. Civ. Eng. Build. Constr. Ser.,
  46 (1967).

\bibitem{Herrmann1967}
{\sc L.~R. Herrmann}, {\em Finite-element bending analysis for plates}, J. Eng.
  Mech. Div., 93 (1967), pp.~13--26.

\bibitem{HuLin2025}
{\sc K.~Hu and T.~Lin}, {\em Finite element form-valued forms: Construction},
  arXiv preprint arXiv:2503.03243,  (2025).

\bibitem{HuangShiWang2021}
{\sc X.~Huang, Y.~Shi, and W.~Wang}, {\em A {Morley-Wang-Xu} element method for
  a fourth order elliptic singular perturbation problem}, J. Sci. Comput., 87
  (2021), p.~Paper No. 84.

\bibitem{HuangTang2025}
{\sc X.~Huang and Z.~Tang}, {\em Robust and optimal mixed methods for a
  fourth-order elliptic singular perturbation problem}, J. Sci. Comput., 105
  (2025), p.~Paper No. 72.

\bibitem{Johnson1973}
{\sc C.~Johnson}, {\em On the convergence of a mixed finite-element method for
  plate bending problems}, Numer. Math., 21 (1973), pp.~43--62.

\bibitem{Kadlec1964}
{\sc J.~Kadlec}, {\em The regularity of the solution of the {Poisson} problem
  in a domain whose boundary is similar to that of a convex domain},
  Czechoslovak Math. J., 14 (1964), pp.~386--393.

\bibitem{LiMingZhou2025TRUNC}
{\sc H.~Li, P.~Ming, and Y.~Zhou}, {\em The {TRUNC} element in any dimension
  and application to a modified {Poisson} equation}, Numerical Methods for
  Partial Differential Equations, 41 (2025), p.~e23151.

\bibitem{LiuHuangWang2020}
{\sc K.~Liu, X.~Huang, and W.~Wang}, {\em Mixed finite element method for
  fourth-order elliptic singular perturbation problems}, J. Wenzhou Univ. (Nat.
  Sci. Ed.), 41 (2020), pp.~24--30.
\newblock (in Chinese).

\bibitem{MitreaMitreaYan2010}
{\sc D.~Mitrea, M.~Mitrea, and L.~Yan}, {\em Boundary value problems for the
  {Laplacian} in convex and semiconvex domains}, J. Funct. Anal., 258 (2010),
  pp.~2507--2585.

\bibitem{Nedelec1986}
{\sc J.-C. N\'{e}d\'{e}lec}, {\em A new family of mixed finite elements in
  {$\mathbb{R}^3$}}, Numer. Math., 50 (1986), pp.~57--81.

\bibitem{NilssenTaiWinther2001}
{\sc T.~K. Nilssen, X.-C. Tai, and R.~Winther}, {\em A robust nonconforming
  {$H^2$}-element}, Math. Comp., 70 (2001), pp.~489--505.

\bibitem{Nitsche1971}
{\sc J.~Nitsche}, {\em {\"{U}ber} ein {Variationsprinzip} zur {L\"{o}sung} von
  {Dirichlet-Problemen} bei {Verwendung} von {Teilr\"{a}umen}, die keinen
  {Randbedingungen} unterworfen sind}, Abh. Math. Sem. Univ. Hamburg, 36
  (1971), pp.~9--15.

\bibitem{PechsteinSchoeberl2011}
{\sc A.~Pechstein and J.~Sch\"oberl}, {\em Tangential-displacement and
  normal-normal-stress continuous mixed finite elements for elasticity}, Math.
  Models Methods Appl. Sci., 21 (2011), pp.~1761--1782.

\bibitem{PechsteinSchoeberl2017}
{\sc A.~S. Pechstein and J.~Sch\"oberl}, {\em The {TDNNS} method for
  {Reissner--Mindlin} plates}, Numer. Math., 137 (2017), pp.~713--740.

\bibitem{Semper1992}
{\sc B.~Semper}, {\em Conforming finite element approximations for a
  fourth-order singular perturbation problem}, SIAM J. Numer. Anal., 29 (1992),
  pp.~1043--1058.

\bibitem{Talenti1965}
{\sc G.~Talenti}, {\em Sopra una classe di equazioni ellittiche a coefficienti
  misurabili}, Ann. Mat. Pura Appl., 69 (1965), pp.~285--304.

\bibitem{WangMeng2007}
{\sc M.~Wang and X.~Meng}, {\em A robust finite element method for a 3-{D}
  elliptic singular perturbation problem}, J. Comput. Math., 25 (2007),
  pp.~631--644.

\bibitem{WangXuHu2006}
{\sc M.~Wang, J.~C. Xu, and Y.~C. Hu}, {\em Modified {Morley} element method
  for a fourth order elliptic singular perturbation problem}, J. Comput. Math.,
  24 (2006), pp.~113--120.

\bibitem{WangHuangTangZhou2018}
{\sc W.~Wang, X.~Huang, K.~Tang, and R.~Zhou}, {\em {Morley-Wang-Xu} element
  methods with penalty for a fourth order elliptic singular perturbation
  problem}, Adv. Comput. Math., 44 (2018), pp.~1041--1061.

\bibitem{ZhangZhao2024}
{\sc B.~Zhang and J.~Zhao}, {\em The virtual element method with interior
  penalty for the fourth-order singular perturbation problem}, Commun.
  Nonlinear Sci. Numer. Simul., 133 (2024), p.~Paper No. 107964.

\bibitem{ZhangZhaoChen2020}
{\sc B.~Zhang, J.~Zhao, and S.~Chen}, {\em The nonconforming virtual element
  method for fourth-order singular perturbation problem}, Adv. Comput. Math.,
  46 (2020), p.~Paper No. 19.

\bibitem{Zulehner2011}
{\sc W.~Zulehner}, {\em Nonstandard norms and robust estimates for saddle point
  problems}, SIAM J. Matrix Anal. Appl., 32 (2011), pp.~536--560.

\end{thebibliography}

\end{document}